\numberwithin{theorem}{section}
\numberwithin{example}{section}
\numberwithin{lemma}{section}
\numberwithin{remark}{section}
\numberwithin{definition}{section}
\numberwithin{corollary}{section}
\numberwithin{proposition}{section}
\newcommand{\C}{\mathbb{C}}
\newcommand{\F}{\mathcal{F}}
\newcommand{\R}{\mathbb{R}}
\renewcommand{\E}{\vec{E}}
\newcommand{\M}{\mathrm{M}}
\newcommand{\N}{\mathbb{N}}
\renewcommand{\P}{\vec{P}}
\newcommand{\V}{\mathcal{V}}
\newcommand{\Kum}{{}_{1}\hspace{-0.7pt}F_{1}}
\newcommand{\Z}{\mathbb{Z}}
\renewcommand{\Re}{\mathrm{Re}}
\newcommand{\onorm}[1]{\| #1 \|_{\mathrm{op}}}
\renewcommand{\H}{\mathrm{H}}
\newcommand{\LL}{\boldsymbol{\lambda}}
\newcommand{\TT}{\mathrm{T}}
\newcommand{\X}{\vec{X}}
\newcommand{\tr}{\operatorname{tr}}
\newcommand{\diag}{\operatorname{diag}}
\newcommand{\bvec}[1]{\boldsymbol{#1}}
\newcommand{\norm}[1]{\| #1 \|}
\newcommand{\cnorm}[1]{|\!|\!| #1 |\!|\!|}
\renewcommand{\vec}[1]{\boldsymbol{#1}}
\newcommand{\inner}[1]{\langle #1 \rangle}
\begin{document}
\hypersetup{bookmarksdepth=-1}

\title*{Hunter's positivity theorem and random vector norms}
\titlerunning{Random-vector norms} 

\author{Ludovick Bouthat\orcidID{0000-0002-2094-3028},\\
\'Angel Ch\'avez, and\\ Stephan Ramon Garcia\orcidID{0000-0001-8971-5448}}

\institute{Ludovick Bouthat \at D\'epartement de math\'ematiques et de statistique, Universit\'e Laval, 2325 Rue de l'Universit\'e, Qu\'ebec, QC G1V 0A6 CA, \email{lubou309@ulaval.ca} \and \'Angel Ch\'avez \at Mathematics Department, Regis University, 3333 Regis Blvd., Denver, CO 80221 USA, \email{chave360@regis.edu}
\and Stephan Ramon Garcia \at Department of Mathematics and Statistics, Pomona College, 610 N. College Ave. Claremont, CA 91711 USA \email{stephan.garcia@pomona.edu}\\ \url{https://stephangarcia.sites.pomona.edu/}}

\maketitle

\abstract{A theorem of Hunter ensures that the complete homogeneous symmetric polynomials of even degree are positive definite functions. A probabilistic interpretation of Hunter's theorem suggests a broad generalization: the construction of so-called random vector norms on square complex matrices.  This paper surveys these ideas, starting from the fundamental notions and developing the theory to its present state.  We study numerous examples and present a host of open problems.
}

\hypersetup{bookmarksdepth}

\section{Introduction}
The complete homogeneous symmetric (CHS) polynomial of degree $k$ in $n$ variables is
the sum of all distinct degree-$k$ monomials in the given variables.  A remarkable theorem of D.B.~Hunter, proved in 1977, asserts that the CHS polynomials of even degree are positive definite functions; that is, they vanish only at the origin and are otherwise positive \cite{Hunter}.  Since then, this result has been rediscovered and reproved many times; see 
\cite[Thm.~1]{Aguilar},
\cite[Lem.~3.1]{Barvinok},
\cite{Baston},
\cite[Thm.~2]{BGON},
\cite[Cor.~17]{GOOY},
\cite[Thm.~2.3]{Roventa},
and \cite[Thm.~1]{Tao}.  As Tao observed, these polynomials are also Schur convex, meaning that they respect the majorization of real vectors \cite[Thm.~1.ii]{Tao}. 

When CHS polynomials of even degree are applied to the eigenvalue lists of Hermitian matrices, one obtains a norm on the real vector space of complex Hermitian matrices.  Unlike traditional norms, these norms rely upon the eigenvalues themselves, rather than their absolute values.  More importantly, these norms extend naturally to all square complex matrices.  This process involves noncommutative $*$-polynomials and reveals intriguing combinatorial and probabilistic interpretations.

We present four proofs of Hunter's theorem, including a probabilistic approach that suggests a fruitful path toward generalizations.  It turns out that the CHS polynomials arise as expectations of certain random vectors whose entries are drawn from an exponential distribution.\footnote{They can also be viewed as moments of a certain probability density, a B-spline in the terminology of Curry--Schoenberg \cite{CS,BGON} or as Peano's kernel from the theory of divided differences \cite{ReciprocalSchur, Davis, Phil}.}  
Other distributions provide different families of random vectors, all of which provide new norms on the Hermitian matrices, and by a general complexification process, to all square complex matrices.

Surprisingly, the original motivation for these results stems from numerical semigroup theory (that is, in combinatorics), in which
CHS polynomials appeared unexpectedly in the statistical study of factorization lengths \cite{GOOY, GOU, GOOW}.

Although the complete story is not yet fully written, many of the fundamental ideas relevant to operator theorists, matrix theorists, and functional analysts can now be presented in a deliberate and coherent manner.  Our aim in this survey article, which is based upon material in \cite{Aguilar, Bouthat1, BGON, Ours, CGH}, is to start with the basic notions and notations, many of which are probabilistic or combinatorial in nature, and develop the theory of random vector norms to the point where the energetic reader might consider tackling some of the open problems we pose at the conclusion.

\vspace{-8pt}
\subsection{Organization}
\vspace{-3pt}
This paper is organized as follows. Section \ref{Section:Preliminaries} establishes our notation and contains important background material. Section \ref{Section:Birkhoff} presents results about the polytope of doubly stochastic matrices and its connection with majorization. Section \ref{Section:Hunter} concerns Hunter's positivity theorem, which motivates, directly and indirectly, much of the rest of this paper. Section \ref{Section:Classification} concerns the classification of weakly unitarily invariant norms on the space of complex Hermitian matrices, along with a method to extend these norms to all square complex matrices.

Random vector norms are defined in Section \ref{Section:Main} and several of their properties are established in Theorem \ref{Theorem:Main2}, the main result of this document. The rest of the paper is dedicated to studying these norms. Section \ref{Section:Examples} contains examples of random vector norms arising from familiar distributions. Section \ref{Section:LowerandUpperBounds} establishes lower and upper bounds for these new norms, comparing them to more standard quantities. Section \ref{Section:Submultiplicativity} is dedicated to studying the conditions under which a given random vector norm is submultiplicative. Section \ref{Sec:CMS} comes full circle and investigates the norms arising from the CHS polynomials, the starting point for the study of random vector norms. 
We wrap up in Section \ref{Section:Questions} with a list of open problems that we hope will motivate readers and spur further research.

\section{Background material}\label{Section:Preliminaries}
This section contains the background material and notation needed for our explorations.  Subsection \ref{Subsection:Numbers} concerns numbers, matrices, and functions. Subsection \ref{Subsection:SchurConvexity} covers symmetric polynomials, the majorization partial order, and Schur-convexity. Subsection \ref{Subsection:Partitions} concerns partitions and the power-sum expansions for complete homogeneous symmetric polynomials. Subsection \ref{Subsection:Probability} contains a brief review of basic probability theory.  Subsection \ref{subsec:ineq} concerns several famous inequalities which are used in this document. Finally, Subsection \ref{subsec:func} describes the complete Bell polynomials and the Kummer confluent hypergeometric function.


\subsection{Basic notation}\label{Subsection:Numbers}

\noindent\textbf{Numbers.}
Let $\N$, $\Z$, $\R$, and $\C$ denote the set of natural numbers, integers, real numbers, and complex numbers,
respectively.
For emphasis, let us state that $\N$ does not include 0.  This reminder is prudent, since in the study of numerical semigroups (which originally inspired our investigations), the convention is that $\N$ contains $0$.  We denote numbers in lower-case
italic or Greek letters. In this document, the lower-case italic letter $d$ shall be used repeatedly. Depending on the context, it should be interpreted either as a real number greater than $1$, a natural number, or an even number greater than $2$. In these cases, the domain will be thoroughly noted. 

\medskip\noindent\textbf{Vectors.}
We denote the elements of $\C^n$, and hence of $\R^n$, by lower-case boldface letters (occasionally Greek). 
If matrix multiplication is involved, we regard them as column vectors, that is, as $n \times 1$ matrices.  
We endow $\C^n$ and $\R^n$ with the inner product $\inner{ \vec{x}, \vec{y} } = \sum_{i=1}^n x_i \overline{y_i}$, in which $\vec{x} = (x_1,x_2,\ldots,x_n)$
and $\vec{y} = (y_1,y_2,\ldots,y_n)$.
The inner product $\inner{\cdot, \cdot}$ is conjugate-linear in the second slot.

\medskip\noindent\textbf{Matrices.}
The superscript $^{\intercal}$ denotes the transpose and $^{*}$ denotes the conjugate transpose of a matrix.  
We identify the $1 \times 1$ matrix $\vec{y}^* \vec{x}$ with the scalar $\inner{ \vec{x} , \vec{y} }$.
A matrix is \emph{Hermitian} if it equals its conjugate transpose.
Let $\M_n$ denote the set of $n \times n$ complex matrices and $\H_n \subset \M_n$ the subset of $n\times n$ Hermitian complex matrices. 
We reserve the letter $A$ for Hermitian matrices (so $A=A^*$ implicity) and $Z$ for arbitrary square complex matrices. A matrix $U\in \M_n$ is \emph{unitary} if $UU^*=U^*U=I_n$. 
Let $\diag(x_1,x_2,\ldots,x_n) \in \M_n$ denote the $n \times n$ diagonal matrix
with diagonal entries $x_1,x_2,\ldots,x_n$, in that order.  If $\vec{x} = (x_1,x_2,\ldots,x_n)$ is understood from context,
we may write $\diag(\vec{x})$ for brevity.

\medskip\noindent\textbf{Eigenvalues and singular values.}
The eigenvalues of each $A\in \H_n$ are real.  We write them in nonincreasing order, 
repeated according to multiplicity:
\begin{equation*}
\lambda_1(A)\geq \lambda_2(A)\geq \cdots \geq \lambda_n(A).
\end{equation*}
We denote this collectively by $\LL(A) = (\lambda_1(A), \lambda_2(A), \ldots, \lambda_n(A))$, or simply 
$\LL=(\lambda_1, \lambda_2, \ldots, \lambda_n)$
if $A$ is unambiguous from context.
The \emph{singular values} of $Z \in \M_n$ are the eigenvalues
of the positive semidefinite matrix $(Z^*Z)^{1/2}$.  We denote them
$\sigma_1(Z) \geq \sigma_2(Z) \geq \cdots \geq \sigma_n(Z) \geq 0$,
repeated according to multiplicity.

\medskip\noindent\textbf{Norms.}
A \emph{norm} $\| \,\cdot\,\|$ on $\M_n$ or $\H_n$ is a positive-definite function that is absolutely homogeneous and satisfies
the triangle inequality.  In particular, a norm must satisfy
\begin{enumerate}[label=(\alph*)]
	\itemsep.5em
	\item $\norm{A} \geq 0$ with $\norm{A} = 0$ if and only if $A = 0$;
	\item $\norm{cA} = |c| \norm{A}$; and
	\item $\norm{A+B} \leq \norm{A} + \norm{B}$.
\end{enumerate}
A norm on $\M_n$ is a \emph{matrix norm} if it is also submultiplicative: $\norm{AB}\leq \norm{A} \norm{B}$.
Commonly encountered matrix norms include the \emph{operator norm}, which is defined by $\onorm{A} = \sigma_1(Z)$, and
the \emph{Frobenius norm}, which is defined by 
\begin{equation*}
\norm{A}_{\text{F}} = (\tr A^*\!A)^{1/2}= \bigg(\sum_{i,j=1}^n |a_{ij}|^2\bigg)^{\!\!1/2} \!= \bigg(\sum_{k=1}^n \sigma_k^2(Z)  \bigg)^{\!\!1/2}\!.
\end{equation*}

\subsection{Symmetry and Schur-convexity}\label{Subsection:SchurConvexity}
A multivariate function is \emph{symmetric} if it is invariant under all permutations of its arguments. 
For example, the \emph{trace} of $A \in \M_n$ is a symmetric function of its eigenvalues and also of its diagonal entries:
$\tr A = \sum_{i=1}^n a_{ii} = \sum_{i=1}^n \lambda_i(A)$.

\medskip\noindent\textbf{Power-sum symmetric polynomials.}
The \emph{power-sum symmetric polynomial} of degree $k$ in the variables $x_1,x_2,\ldots,x_n$, where $k\in \N$, is defined by
\begin{equation}\label{eq:PowerSumDefinition}
    p_k(x_1,x_2, \ldots, x_n)=x_1^k+x_2^k+\cdots +x_n^k,
\end{equation}
often written $p_k(\vec{x})$ or $p_k$ if the variables are clear from context. If $A\in \H_n$, then the eigenvalues of $A^k$ are the $k$th powers of the eigenvalues of $A$, repeated according to multiplicity. We therefore have the following important relationship for $A\in \H_n$:
\begin{equation}\label{eq:TracePower}
    p_k(\LL(A))  =  \tr (A^k) .
\end{equation}

\medskip\noindent\textbf{CHS polynomials.}
The \emph{complete homogeneous symmetric polynomial} (CHS) of degree $d$ in the variables $x_1, x_2, \ldots ,x_n$, where $d\geq 0$ is an integer, is the sum
\begin{equation}\label{eq:CHS}
h_d(x_1,x_2,\ldots,x_n) = \sum_{1 \leq i_1 \leq \cdots \leq i_{d} \leq n} x_{i_1} x_{i_2}\cdots x_{i_d},
\end{equation}
of all monomials of degree $d$ in $x_1,x_2,\ldots,x_n$ \cite[Sec.~7.5]{StanleyBook2}.
We may write $h_d(\vec{x})$ if the variables are clear from context.  
An elementary combinatorial argument of the stars-and-bars variety shows that there are $\binom{n+d-1}{d}$
summands in \eqref{eq:CHS}.  
The first several CHS polynomials in $x_1$ and $x_2$ are 
\begin{align*}
h_0(x_1,x_2)&= 1,\\
h_1(x_1,x_2)&= x_1+x_2,\\
h_2(x_1,x_2)&= x_1^2+x_1 x_2+x_2^2, \\
h_3(x_1,x_2)&= x_1^3+ x_1^2 x_2+x_1 x_2^2 +x_2^3, \\
h_4(x_1,x_2)&= x_1^4  + x_1^3 x_2 + x_1^2 x_2^2 + x_1 x_2^3 + x_2^4,\\
h_5(x_1,x_2) &= x_1^5+x_1^4 x_2 +x_1^3x_2^2 +x_1^2 x_2^3 +x_1 x_2^4 +x_2^5, \quad\text{and}\\
h_6(x_1,x_2) &= x_1^6+ x_1^5 x_2 + x_1^4 x_2^2 +x_1^3 x_2^3 +x_1^2x_2^4  +x_1 x_2^5 +x_2^6.
\end{align*}
The generating function for the CHS polynomials is 
\begin{equation}
 \sum _{{k=0}}^{\infty }h_{k}(x_1,x_2,\ldots ,x_n)t^{k}
 =\prod _{{i=1}}^{n}\sum _{{j=0}}^{\infty }(x_it)^{j}
 =\prod_{i=1}^{n} \frac{1}{1-x_i t}.\label{eq:CHSGenerating}
\end{equation}
Its radius of convergence is $\min\{ |x_1|^{-1},\ldots, |x_n|^{-1}\}$, in which we permit $+\infty$ as an argument
of the minimum whenever $x_k=0$ for some $k\in \{1,2,\ldots,n\}$.
 As is customary in combinatorics, the coefficient $c_k$ of $t^k$ in $f(t) = \sum_{r=0}^{\infty} c_r t^r$ is denoted by $[t^k]f(t)$.  Therefore, \eqref{eq:CHSGenerating} ensures that
\begin{equation}\label{eq:CHSGenerating2}
h_k(x_1,x_2,\ldots,x_n) = [t^k]\prod_{i=1}^{n} \frac{1}{1-x_i t}.
\end{equation}
For $d \geq 1$, the power-sum symmetric and CHS polynomials are related by the Newton--Gerard identities \cite[Section 10.12]{Seroul}
\begin{equation}
kh_{k}=\sum _{i=1}^{k}h_{k-i}p_{i} \quad \text{for }~ k \geq 1.\label{eq:CHSPowerSumRec}
\end{equation}
We remark that the power-sum expansion for the CHS polynomials appears below in \eqref{eq:hdgen} and gives another presentation of the CHS polynomials. Finally, we remark that Hunter's theorem (see Theorem \ref{Theorem:Hunter} below) is a recurring theme in this paper.  It says that the CHS polynomials of even degree are positive-definite functions on $\R^n$.

The CHS polynomials can be extended to any order $z\in \C$ if $\Re\,z>-1$, using the formula
\begin{equation}\label{abhz}
	h_z (x_1,x_2,\ldots,x_n)=\binom{z+n-1}{n-1} \int_{\R} x^z  F(x;x_1,x_2,\ldots,x_n)\,\mathrm{d}x,
\end{equation}
where $F(x;x_1,\ldots,x_n)$ is the Curry--Schoenberg B-spline, introduced in~\cite{CS} and defined by
\begin{equation}\label{eq:PeanoSecond}
	F(x;x_1,x_2,\ldots,x_n) = \frac{n-1}{2} \sum_{j=1}^n \frac{|x_j-x|(x_j-x)^{n-3}}{\prod_{k \neq j}(x_j-x_k)}.
\end{equation}
This generalization was established in \cite[Thm.~1]{BGON} and is used in Subsection \ref{Subsection:CHSExtension}.

\medskip\noindent\textbf{Majorization.}
Let $\widetilde{\bvec{x}}=(\widetilde{x}_1, \widetilde{x}_2, \ldots, \widetilde{x}_n)$ denote the nonincreasing rearrangement of $\bvec{x}=(x_1,x_2, \ldots, x_n) \in \R^n$. For example, $\widetilde{\vec{x}} = (4,3,2,1)$ when $\vec{x} = (1,3,2,4)$.
We say that $\bvec{y}$  \emph{majorizes} $\bvec{x}$, denoted $\bvec{x}\prec \bvec{y}$, if 
\vspace{-3pt}
\begin{equation*}
\sum_{i=1}^n \widetilde{x}_i = \sum_{i=1}^n \widetilde{y}_i
\quad \text{and} \quad
\sum_{i=1}^k \widetilde{x}_i \leq \sum_{i=1}^k \widetilde{y}_i
\quad \text{for}~ 1 \leq k \leq n-1.
\end{equation*} For example, $(3,3,3)\prec (5,3,1)$ and $ (2, 0 ,0, -2 ,0) \prec (2, 1, 0, -1, -2)$.

\medskip\noindent\textbf{Schur-convexity.}
A function $f:\R^n\to \R$ is \emph{Schur-convex} if  $f(\bvec{x})\leq f(\bvec{y})$ whenever $\bvec{x}\prec \bvec{y}$. For example, the power-sum symmetric polynomials \eqref{eq:PowerSumDefinition} and the CHS polynomials \eqref{eq:CHS} are Schur-convex. The Schur-convexity of the CHS polynomials is proved in Subsection \ref{Subsection:TaoCHS}. We remark that any convex symmetric function is Schur-convex \cite[p.~258]{Roberts}. The converse of this statement is false, however. For example, the function  $f(x,y)=(x+y)^3$ is Schur-convex but not convex.


\subsection{Partitions, polynomials, and traces}\label{Subsection:Partitions}
Threading the power-sum symmetric polynomials over partitions and relating
the result to traces of Hermitian matrices is particularly important. We therefore review the basic notions of partition combinatorics which are relevant to our work.

\medskip\noindent\textbf{Partitions.}
A \emph{partition} of $d\in \N$ is an $r$-tuple $\bvec{\pi}=(\pi_1, \pi_2, \ldots, \pi_r) \in \N^r$ such that
\begin{equation}\label{eq:PartitionDef}
\pi_1 \geq \pi_2 \geq \cdots \geq \pi_r  \quad \text{and}\quad
\pi_1+ \pi_2 + \cdots + \pi_r = d.
\end{equation}
The number of \emph{parts} $r$ depends on the partition $\bvec{\pi}$; clearly $r \leq d$.
We write $\bvec{\pi} \vdash d$ if $\bvec{\pi}$ is a partition of $d$ and 
we write $| \bvec{\pi}| = r$ for the number of parts in the partition \cite[Sec.~1.7]{StanleyBook1}.  
For example, $(20,9,6) \vdash 35$ and $|(20,9,6)| = 3$.

\medskip\noindent\textbf{Combinatorial coefficients.}
Suppose $\bvec{\pi}=(\pi_1, \pi_2, \ldots, \pi_r)\vdash d$. Define
\begin{equation}\label{eq:DefY}
\kappa_{\bvec{\pi}} = \kappa_{\pi_1} \kappa_{\pi_2} \cdots \kappa_{\pi_{r}}
\quad \text{and} \quad
y_{\bvec{\pi}}= \frac{d!}{\prod_{i\geq 1}(i!)^{m_i}m_i!},\vspace{-3pt}
\end{equation} 
in which the $\kappa_i$s are defined shortly in \eqref{eq:CumulantGen} and $m_i$ is the multiplicity of $i$ in $\bvec{\pi}$. For example, $d=13$ and
$\bvec{\pi} = (4,4,2,1,1,1)$ yields 
\begin{equation*}
\kappa_{\bvec{\pi}} = \kappa_4^2 \kappa_2 \kappa_1^3
\quad\text{and}\quad
y_{\bvec{\pi}}= \frac{13!}{(1!^3 3!) (2!^1 1!) (4!^2 2!)} = 450{,}450.
\end{equation*}
Note that in \cite{CGH,Ours}, $y_{\bvec{\pi}}$ was instead defined as $\prod_{i\geq 1}(i!)^{m_i}m_i!$. This diverging notation is explained in further detail in Remark \ref{Rem:redef}. Let us note for the moment that the coefficients $y_{\bvec{\pi}}$ are precisely the number of partitions of a set of size $d$ corresponding to the integer partition
\begin{equation*} d=\underbrace {1+\cdots +1} _{m_{1}}\,+\,\underbrace {2+\cdots +2} _{m_{2}}\,+\,\underbrace {3+\cdots +3} _{m_{3}}+\cdots \vspace{-3pt}\end{equation*}
of the integer $d$. As such, they are always integers.

\medskip\noindent\textbf{Partitions, power sums, and traces.}
Suppose $\bvec{\pi}=(\pi_1, \pi_2, \ldots, \pi_r)\vdash d$.  Let
\begin{equation*}
    p_{\bvec{\pi}} = p_{\pi_1}p_{\pi_1}\cdots p_{\pi_r},
\end{equation*}
in which the power-sum symmetric polynomials $p_{\pi_i}$ are defined in \eqref{eq:PowerSumDefinition}.
Recall that the trace of a matrix is the sum of its eigenvalues, repeated according to multiplicity.
If $A \in \H_n$ has eigenvalues $\bvec{\lambda} = (\lambda_1,\lambda_2,\ldots,\lambda_n)$, then
$p_k(\bvec{\lambda})= \tr A^k$ by \eqref{eq:TracePower} so
\begin{equation}\label{eq:pTrace}
p_{\bvec{\pi}}(\bvec{\lambda}) =p_{\pi_1}(\bvec{\lambda})p_{\pi_2}(\bvec{\lambda})\cdots p_{\pi_r}(\bvec{\lambda})
=(\tr A^{\pi_1})(\tr A^{\pi_2})\cdots (\tr A^{\pi_{r}}).
\end{equation} 
This identity connects eigenvalues, traces, and partitions to symmetric polynomials.

\medskip\noindent\textbf{Partitions and CHS polynomials.}
Another expression for the CHS polynomials is
\begin{equation}\label{eq:hdgen}
 h_{d}(x_1,x_2,\ldots, x_n)=\sum_{\bvec{\pi} \,\vdash\, d} \frac{p_{\bvec{\pi}}(x_1,x_2,\ldots, x_n)}{z_{\bvec{\pi}}} ,
\end{equation}   
in which the sum runs over all partitions $\bvec{\pi}=(\pi_1, \pi_2, \ldots,\pi_r)$ of $d$ and 
\begin{equation}\label{eq:zI}
z_{\bvec{\pi}} = \prod_{i \geq 1} i^{m_i} m_i!,
\end{equation} 
where $m_i$ is the multiplicity of $i$ in $\bvec{\pi}$ \cite[Prop.~7.7.6]{StanleyBook2}. For example, the partition $\bvec{\pi} = (4,4,2,1,1,1)$ yields $z_{\bvec{\pi}}= (1^3 3!) (2^1 1!) (4^2 2!) = 384$ \cite[(7.17)]{StanleyBook2}. The integer $z_{\bvec{\pi}}$ is  the size of the centralizer of a permutation of conjugacy class $\bvec{\pi}$.


\subsection{Probability theory}\label{Subsection:Probability}
A \emph{probability space} is a measure space $(\Omega, \F, \P)$, in which 
$\F$ is a $\sigma$-algebra of subsets of $\Omega$ and the function $\P:\F\to[0,1]$ satisfies $\P(\Omega)=1$.
In the present work, $\Omega$ is a $\sigma$-algebra of subsets of $\R$, or of some compact subset thereof.

\medskip\noindent\textbf{Random variables.}
A \emph{random variable} is a measurable function $X: \Omega\to \R$. 
We assume that $\Omega \subset \R$ and $X$ is nondegenerate, that is, nonconstant almost everywhere.
The pushforward measure $X_{*}\P$ of $X$ is the \emph{probability distribution} of $X$.  
The \emph{cumulative distribution function} (CDF) of $X$ is the function defined by 
\begin{equation*}
F_X(x)=\P(X\leq x),
\end{equation*}
which is the pushforward measure of $(-\infty, x]$. If $X_{*}\P$ is absolutely continuous with respect to Lebesgue measure $m$,  then the corresponding  Radon--Nikodym derivative
$f_X= dX_* P/dm$ is the \emph{probability density function} (PDF) of $X$ \cite[Ch.~1]{Billingsley}. We remark that the CDF of a random variable always exists, while the PDF may not. 

\medskip\noindent\textbf{Expectation.}
The \emph{expectation} of a random variable $X$, when it exists, is defined by
\begin{equation*}
\E[X]=\int_{\Omega} X \,d\P,
\end{equation*}
sometimes written as $\E X$ when a proliferation of brackets is undesirable.
For $p\geq 1$, let $L^p(\Omega, \F, \P)$ denote the (Banach) space of random variables satisfying
\begin{equation*}
\norm{X}_{L^p}=(\E |X|^p)^{1/p} < \infty.
\end{equation*}

\medskip\noindent\textbf{Moments.}
Suppose $k\in \N$. The $k$th \emph{moment} of a random variable $X$ is defined by
\begin{equation*}
\mu_k = \E[X^k]
\end{equation*}
if it exists.  The \emph{mean} of $X$ is $\mu_1$ and 
the \emph{variance} of $X$ is $\mu_2 - \mu_1^2$; we often write $\mu$ and $\sigma^2$, respectively. Jensen's inequality ensures that 
the variance is positive since $X$ is nondegenerate.
If $X$ has PDF $f_X$, then 
\begin{equation*}
\mu_k = \int_{-\infty}^{\infty} x^k f_X(x)\,dm(x).
\end{equation*}
Similarly, the $d$th \emph{standardized absolute moment} of the distribution $X$ is defined as 
\begin{equation*}
\Tilde{\mu}_d = \frac{\E\big[|X-\mu|^d \big]}{\sigma^d}.
\end{equation*}

\medskip\noindent\textbf{Independence.}
A collection $X_1, X_2, \ldots, X_n$ of random variables  is \emph{independent} if 
\begin{equation*}
\P\Big( \bigcap_{i=1}^{n}\{X_i\leq x_i \}\Big)\!=\prod_{i=1}^n F_{X_i}(x_i).
\end{equation*}
Independent random variables behave nicely with respect to expectation and this fact is important in our work. If $X_1, X_2, \ldots, X_n$ are independent random variables, then
\begin{equation*}
\E \big[X_1^{i_1} X_2^{i_2}\cdots X_n^{i_n}\big]=\prod_{k=1}^n\E  \big[X_k^{i_k}\big]
\end{equation*}
for all $i_1, i_2, \ldots, i_n \in \N$ whenever both sides exist. We remark that a collection $X_1, X_2, \ldots, X_n$ of random variables are called \emph{independent and identically distributed} (iid) if they are independent and have identical cumulative distributions.

\medskip\noindent\textbf{Moment generating function and cumulants.}
The \emph{moment generating function}  of  a random variable $X$, when it exists, is the power series
\begin{equation}\label{eq:MGF}
M(t)=\E \big[e^{tX}\big]=\sum_{k=0}^{\infty} \E \big[X^k\big] \frac{t^k}{k!} = \sum_{k=0}^{\infty} \mu_k\frac{t^k}{k!}.
\end{equation}
If $X$ admits a moment generating function $M(t)$, 
then the $r$th \emph{cumulant} $\kappa_r$ of $X$ is defined by the \emph{cumulant generating function} 
\begin{equation}\label{eq:CumulantGen}
K(t)=\log M(t)=\sum_{r=1}^{\infty} \kappa_r \frac{t^r}{r!}.
\end{equation} 
The first two cumulants of $X$ correspond to the mean and variance of $X$. In particular,  
\begin{equation*}
\kappa_1 = \mu_1 \quad \text{and}\quad  \kappa_2 = \mu_2 - \mu_1^2.
\end{equation*}
If $X$ does not admit a moment generating function but $X\in L^d(\Omega, \F, \P)$ for some  
$d\in \N$, then we can define $\kappa_1, \kappa_2, \ldots, \kappa_d$ by the following recursion \cite[Sec.~9]{Billingsley}:
\begin{equation}
\mu_r=\sum_{\ell=0}^{r-1} \binom{r-1}{\ell} \mu_{\ell}\kappa_{r-\ell} \quad \text{for $1 \leq r \leq d$.}\label{eq:CumulantMoment}
\end{equation}

\medskip\noindent\textbf{Characteristic function.}
Whenever the PDF of a random variable $X$ does not exist, it is useful to instead consider the \emph{characteristic function} of $X$, defined by
\begin{equation}
	\varphi_X(t)=\E\big[e^{itX}\big],\label{eq:characteristic}
\end{equation}
as it completely determines the behavior and properties of the probability distribution of $X$. The characteristic function of a random variable always exists. If $X$ admits a probability density function, then its characteristic function is simply the Fourier transform (with sign reversal) of this probability density function. In the same way, if the moment generating function $M(t)$ of a random variable exists, then $\varphi_X(-it)=M(t)$. Lastly, we note that Hsu \cite[Thm.~4.1]{hsu1951absolute} determined that if $\varphi$ is the characteristic function of $X$ and 
$$P_n(t) = \sum_{k=0}^n \frac{\varphi^{(k)}(0)}{k!} t^k,$$ 
then the $d$th absolute moment of the random variable $X$, for a real number $d\geq 1$, is given by the formula
\begin{equation}
	\frac{\E\big[|X|^d\big]}{\Gamma(d+1)} = \frac{2 \sin\!\big( \tfrac{d\pi}{2} \big)}{\pi} \int_0^\infty \frac{\Re\big( P_{\lfloor d \rfloor}\!(t)-\varphi(t) \big)}{t^{d+1}} \,\mathrm{d}t.\label{eq:absolutemoment}
\end{equation}

\subsection{Famous inequalities}
\label{subsec:ineq}

In this document, and particularly in Section \ref{Section:LowerandUpperBounds}, many famous inequalities are used. We state them explicitly for clarity and completeness.

\medskip\noindent\textbf{H\"older's inequality.} Let $X,Y \hspace{-.5pt}\in\hspace{-.5pt} (\Omega,\mathcal{F},\vec{P})$ be random variables and let $p,q \hspace{-.5pt}\in\hspace{-.5pt} [1,\infty]$ be such that $\frac{1}{p}+\frac{1}{q}=1$. Then
\vspace{-4pt}
\begin{equation*}
\E\hspace{-.3pt}\big[ |XY|\big] \leq \E\hspace{-.3pt}\big[ |X|^p\big]^{\smash{\frac{1}{p}}} \, \E\hspace{-.3pt}\big[ |Y|^q\big]^{\smash{\frac{1}{q}}}.
\end{equation*}

\medskip\noindent\textbf{Jensen's inequality.} Let $(\Omega,\mathcal{F},\vec{P})$ be a probability space and let $f :\Omega \to \R^d$ be an integrable function and $X$ an integrable real-valued random variable. Moreover, let $\vec{x}_{1},\ldots ,\vec{x}_{n} \in \R^d$ and $a_{1},\ldots ,a_{n} \geq 0$. If $\varphi : \mathbb {R}^d \to \mathbb {R}$ is a convex function, then:
\begin{enumerate}[label=(\alph*)]
	\itemsep.5em
	\item ${\displaystyle \varphi \left(\E [X]\right)\leq \E [\varphi (X)]}$ \hfill (Probabilistic form);
	\item ${\displaystyle \varphi \!\left(\int _{\Omega }f\,\mathrm {d} \mu \right)\leq \int _{\Omega }\varphi \circ f\,d\vec{P} }$
	\hfill (Integral form);
	\item ${\displaystyle \varphi \!\left({\frac {\sum a_{i}\vec{x}_{i}}{\sum a_{i}}}\right)\leq {\frac {\sum a_{i}\varphi (\vec{x}_{i})}{\sum a_{i}}}}$\hfill  (Finite form).
\end{enumerate}
The inequalities are reversed if $\varphi$ is concave.

\bigskip\noindent\textbf{Lyapunov's inequality.} Let $X\in L^t(\Omega,\mathcal{F},\vec{P})$ and suppose that $0<s\leq t$. Then
\vspace{-3pt} 
\begin{equation}
	\E\hspace{-.3pt}\big[ |X|^s\big]^{\frac{1}{s}} \leq \E\hspace{-.3pt}\big[ |X|^t\big]^{\frac{1}{t}}. \label{eq:Lyapunov}
\end{equation}

\medskip\noindent\textbf{Marcinkiewicz--Zygmund's inequality} \cite{Marc1937}\textbf{.} Let $1\leq d < \infty$. If $X_i\in L^d(\Omega,\mathcal{F},\vec{P})$, for $i=1,2,\dots,n$, are independent random variables such that $\E[X_i]=0$, then 
\begin{equation}
	A_d \E \!\left[ \bigg( \sum_{i=1}^n |X_i|^2 \bigg)^{\!\frac{d}{2}} \right] \!\leq \E \!\left[ \bigg| \sum_{i=1}^n X_i \bigg|^{d} \right] \leq \!B_d \E \!\left[ \bigg( \sum_{i=1}^n |X_i|^2 \bigg)^{\!\frac{d}{2}} \right], \label{Eq:M-Z}
\end{equation}
where $A_d$ and $B_d$ are positive constants which depend only on $d$ and not on $n$ nor on the underlying distribution of the random variables. 

\medskip

\begin{remark}\label{rem - constant}
	The optimal constants $A_d$ and $B_d$ in \eqref{Eq:M-Z} are not known, although it is immediate that $A_d\leq 1 \leq B_d$ (consider the case $n=1$). 
\end{remark}

\subsection{Special functions}
\label{subsec:func}

\medskip\noindent\textbf{Complete Bell polynomials.} 
The \emph{complete Bell polynomials} \cite[Sec.~II]{Bell} are  the polynomials $B_\ell$ defined by 
\begin{equation}\label{eq:ExpoBell}
\sum_{\ell=0}^{\infty} B_{\ell}(x_1, x_2, \ldots, x_{\ell}) \frac{t^{\ell}}{\ell !}=\exp\!\bigg( \sum_{j=1}^{\infty} x_j \frac{t^j}{j!}\bigg).
\end{equation} We remark that the complete Bell polynomials arise frequently in probability theory given the form of \eqref{eq:CumulantGen}. In particular, the moments and cumulants of a random variable $X$ satisfy the relation $\mu_{\ell}=B_{\ell}(\kappa_1, \kappa_2 , \ldots, \kappa_{\ell})$ for all nonnegative integers $\ell$, in which $B_0=1$. The first several complete Bell polynomials are given by
\begin{align}
B_0& = 1, \nonumber\\
B_1(x_1)&= x_1, \nonumber\\
B_2(x_1,x_2)&= x_1^2+x_2, \nonumber \\
B_3(x_1,x_2,x_3)&= x_1^3+3 x_1 x_2+x_3, \quad \text{and}\nonumber\\
B_4(x_1,x_2,x_3,x_4)&= x_1^4+6x_1^2x_2+4x_1x_3+3x_2^2+x_4. \label{eq:Bell4}
\end{align}

\medskip\noindent\textbf{Kummer's confluent hypergeometric function.} 
Given an $a\in \C$ and $n \in \N$, the \emph{rising factorial} $a^{(n)}$ is defined by 
$a^{(0)}=1$ and $a^{(n)}=a(a+1)(a+2)\cdots (a+n-1)$. The \emph{Kummer confluent hypergeometric functions} are then defined by
\begin{equation}
	\Kum(a;b;z) = \sum _{{n=0}}^{\infty }{\frac  {a^{{(n)}}z^{n}}{b^{{(n)}}n!}}. \label{eq:Kummer}
\end{equation}
Some special values of this function are
\begin{align}
	\Kum(a;b;0) &= 1, \label{Eq:Kummer0} \\
	\Kum(0;b;z) &= 1, \nonumber \\
	\Kum(a;a;z) &= e^z, \quad \text{and}\nonumber \\
	\Kum(-1;b;z) &= 1-\tfrac{z}{b}. \nonumber
\end{align}


\section{Doubly stochastic matrices and majorization}\label{Section:Birkhoff}

A square matrix with nonnegative entries is \emph{doubly stochastic} if each row and column sums to $1$. The theory surrounding these objects is rich, and permeates several areas of mathematics. We present in this section three results central to this theory which are crucial to the proof of Lewis' classification theorem in Subsection \ref{Subsection:Classify}: Birkhoff's theorem in Subsection \ref{subsec:Birkhoff}, Hardy, Littlewood, and P\'olya's theorem in Subsection \ref{subsec:HLP}, and the Schur--Ostrowski criterion in Subsection \ref{subsec:S-O}. A preliminary classical result is first needed and developed in Subsection \ref{subsec:Hall}.


\subsection{Hall's theorem}
\label{subsec:Hall}

A \emph{graph} is a pair $G = (V, E)$, in which $V$ is a set of elements called vertices, and $E$ is a set of pairs of vertices, whose elements are called edges. A graph is \emph{bipartite} if the set of vertices $V$ can be divided into two disjoint and independent sets $V_1$ and $V_2$, that is, every edge in $E$ connects a vertex in $V_1$ to one in $V_2$. In that case, one usually write $G=(V_1,V_2,E)$. Lastly, a \emph{$V_1$-perfect matching} is a set of disjoint edges, which covers every vertex in $V_1$.

\definecolor{myblue}{RGB}{80,80,160}
\definecolor{mygreen}{RGB}{80,140,80}

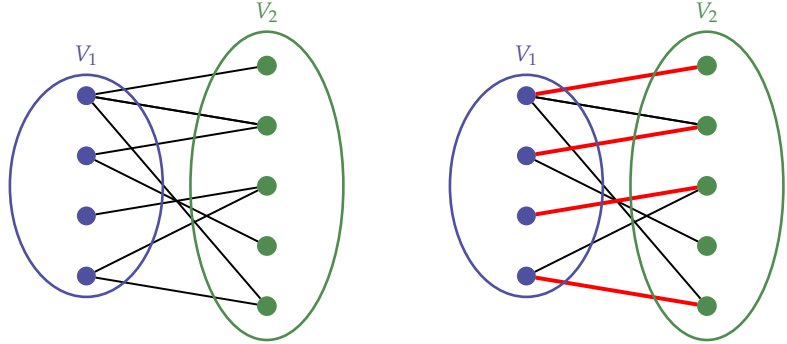
\begin{figure}[!htb]
	\vspace{-10pt}
	\centering
	\begin{minipage}{.5\textwidth}
		\centering
		\begin{tikzpicture}[every fit/.style={ellipse,draw,inner sep=-8pt,text width=2cm, line width=1pt}, scale=0.4]
			\GraphInit[vstyle=Normal]
			\SetUpVertex[Math,Lpos=-180]
			\tikzset{VertexStyle/.style = {shape=circle, fill=myblue,
					minimum size=2pt,inner sep=-2pt,text opacity=0}
			}
			\grEmptyPath[form=2,x=0,y=0,RA=2,rotation=90,prefix=V]{4}
			\SetVertexShade[BallColor=mygreen,OuterSep=0pt]
			\SetUpVertex[Lpos=0]
			\tikzset{VertexStyle/.style = {shape=circle, fill=mygreen,
					minimum size=2pt,inner sep=-2pt,text opacity=0}
			}
			\grEmptyPath[form=2,x=6,y=0,RA=2,rotation=90,prefix=U]{5}
			\SetUpEdge[lw=.75pt,color=black]
			\Edges(U4,V3,U3,V2,U1)
			\Edges(U3,V3,U0,V0,U2,V1)
			\node [mygreen,fit=(U0) (U4),label=above:\textcolor{mygreen}{$V_2$}] {};
			\node [myblue,fit=(V0) (V3),label=above:\textcolor{myblue}{$V_1$}] {};
		\end{tikzpicture}
	\end{minipage}%
	\begin{minipage}{0.5\textwidth}
		\centering
		\begin{tikzpicture}[every fit/.style={ellipse,draw,inner sep=-8pt,text width=2cm, line width=1pt}, scale=0.4]
			\GraphInit[vstyle=Normal]
			\SetUpVertex[Math,Lpos=-180]
			\tikzset{VertexStyle/.style = {shape=circle, fill=myblue,
					minimum size=2pt,inner sep=-2pt,text opacity=0}
			}
			\grEmptyPath[form=2,x=0,y=0,RA=2,rotation=90,prefix=V]{4}
			\SetVertexShade[BallColor=mygreen,OuterSep=0pt]
			\SetUpVertex[Lpos=0]
			\tikzset{VertexStyle/.style = {shape=circle, fill=mygreen,
					minimum size=2pt,inner sep=-2pt,text opacity=0}
			}
			\grEmptyPath[form=2,x=6,y=0,RA=2,rotation=90,prefix=U]{5}
			\SetUpEdge[lw=.75pt,color=black]
			\Edges(V3,U3,V2,U1)
			\Edges(U3,V3,U0,V0,U2,V1)
			\SetUpEdge[lw=1.5pt,color=red]
			\Edges(U4,V3)
			\Edges(U3,V2)
			\Edges(U2,V1)
			\Edges(U0,V0)
			\node [mygreen,fit=(U0) (U4),label=above:\textcolor{mygreen}{$V_2$}] {};
			\node [myblue,fit=(V0) (V3),label=above:\textcolor{myblue}{$V_1$}] {};
		\end{tikzpicture}
	\end{minipage}
	\caption{A bipartite graph (\textsc{left}) and a $V_1$-perfect matching (\textsc{right}).}
\end{figure}

A well-known theorem of Hall \cite{Hall1935} characterizes when a finite bipartite graph has a perfect matching.
Below $|S|$ denotes the cardinality of a finite set $S$.

\begin{theorem}[Hall]\label{Thm:Hall}\index{Hall's marriage theorem}
	If $G=(V_1,V_2,E)$ is a bipartite graph, then there is a $V_1$-perfect matching if and only if every set $W\subset V_1$ of vertices is connected
	to at least $|W|$ vertices in $V_2$.
\end{theorem}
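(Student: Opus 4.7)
The plan is to separate the two directions: necessity is a short counting argument, while sufficiency I would prove by strong induction on $n=|V_1|$.

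\textbf{Necessity.} Suppose a $V_1$-perfect matching $M$ exists. Given $W \subset V_1$, the edges of $M$ incident to $W$ pair the $|W|$ vertices of $W$ with $|W|$ distinct neighbors in $V_2$. Hence $W$ is connected to at least $|W|$ vertices of $V_2$.

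\textbf{Sufficiency, by induction on $n=|V_1|$.} The base case $n=1$ is immediate: the single vertex of $V_1$ has at least one neighbor, which we match to it. For $n\geq 2$, I would split into two cases depending on whether Hall's condition is tight somewhere. In the \emph{strict} case, where every nonempty proper $W\subsetneq V_1$ satisfies $|N(W)| \geq |W|+1$, pick any edge $\{v_1,v_2\}$ with $v_1 \in V_1$ and remove both endpoints to form $G'$. The graph $G'$ still satisfies Hall's condition, because for any $W \subseteq V_1\setminus\{v_1\}$ we lose at most one neighbor, so $|N_{G'}(W)|\geq |N_G(W)|-1 \geq |W|$. The induction hypothesis then yields a matching of $V_1\setminus\{v_1\}$ which, together with $\{v_1,v_2\}$, is $V_1$-perfect. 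In the \emph{tight} case, where some nonempty proper $W \subsetneq V_1$ satisfies $|N(W)|=|W|$, apply the induction hypothesis to the induced bipartite graph on $(W,N(W))$ to match $W$ into $N(W)$; then consider the residual graph $G_2$ on vertex classes $(V_1\setminus W,\, V_2\setminus N(W))$. For any $W'\subseteq V_1\setminus W$, applying the hypothesis in $G$ to $W\cup W'$ gives $|N_G(W\cup W')| \geq |W|+|W'|$, and since $N_G(W\cup W') \subseteq N(W)\cup N_{G_2}(W')$ with $|N(W)|=|W|$, one obtains $|N_{G_2}(W')|\geq |W'|$. Hence $G_2$ satisfies Hall's condition, induction yields a matching of $V_1\setminus W$ into $V_2\setminus N(W)$, and combining the two matchings produces the desired $V_1$-perfect matching in $G$.

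The main obstacle is the bookkeeping in the tight case: one must verify that the marriage condition transfers cleanly to the residual graph $G_2$ after removing both a tight subset of $V_1$ and its (equinumerous) neighborhood. The key observation, as above, is that the ``missing'' neighbors of any $W'\subseteq V_1\setminus W$ in $G_2$ are already fully accounted for by $N(W)$. Once this inheritance is in hand, the induction runs smoothly. Alternative routes—augmenting paths (à la K\"onig), max-flow min-cut, or a direct deficiency argument—are available, but the two-case induction sketched here is the most self-contained and requires no additional machinery.
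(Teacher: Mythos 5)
Your proof is correct and takes essentially the same route as the paper's: induction on $|V_1|$ with the classical two-case split into the everywhere-strict case (remove a matched pair, Hall's condition survives) and the tight-set case (match the tight set $W$ into $N(W)$ by induction, verify Hall's condition for $V_1\setminus W$ in the residual graph, and combine). The only difference is cosmetic — you make the case distinction up front, whereas the paper first removes an arbitrary vertex and neighbor and then detects the tight set when the condition fails in the reduced graph.
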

\begin{proof}
	If a $V_1$-perfect matching exists, then $W$ must have at least $|W|$ neighbors simply because of the edges of the perfect matching. The converse is proved by induction on the size of $|V_1|$. The result is clear if $|V_1|=1$. Hence, suppose that $|V_1|>1$ and that the result holds for every order smaller than $|V_1|$.
	\smallskip
	
	Choose an arbitrary $v_1\in V_1$. Then $v_1$ must then have at least one neighbor $v_2 \in V_2$ by hypothesis. Consider the graph $G'$ induced by removing $v_1$ and $v_2$ from $G$. 
	Finding a $(V_1\!\setminus\!\{v_1\})$-perfect matching in $G'$ and adding the edge $(v_1,v_2)$ would yield a $V_1$-perfect matching in $G$. 
	If every set $W\subset V_1\!\setminus\!\{v_1\}$ of vertices is connected
	to at least $|W|$ vertices in $V_2\!\setminus\! \{v_2\}$, then a $(V_1\!\setminus\!\{v_1\})$-perfect matching in $G'$ exists by the induction hypothesis, so we are done. Otherwise, $W$ must have exactly $|W|$ neighbors in $V_2$.
	
	In that case, denote by $U\subset V_2$ the set of neighbors of $W$. Note that $|U|=|W|$. By the induction hypothesis, there is a $W$-perfect matching that matches each vertex of $W$ to a vertex of $U$. Moreover, if $W' = V_1\!\setminus\!W$, then $W'\cup W$ has at least $|W' \cup W| = |W'|+|W| = |W'|+|U|$ neighbors in $V_2$.  Note that $W'$ must then have at least $|W'|$ neighbors in $V_2\!\setminus\! U$, since all the vertices of $W$ can be matched to a vertex of $U$ in a one-to-one manner. Hence, by the induction hypothesis, there is a $W'$-perfect matching that matches all the vertices of $W'$ to $V_2\!\setminus\! U$. Considering the disjoint union of the above $W$-perfect matching and $W'$-perfect matching yield a $(W' \cup W)$-perfect matching in $G$. Since $W' \cup W=V_1$, we are done.
\end{proof}

\vspace{-12pt}
\subsection{Birkhoff's theorem}
\label{subsec:Birkhoff}
\vspace{-2pt}

The following is a celebrated theorem of Birkhoff \cite{Birkhoff}. We remark that 
$n^2-n+1$ works in place of $n^2$ \cite[Thm.~8.7.2]{HJ}.

\begin{theorem}[Birkhoff]\label{Theorem:Birkhoff}
	If $D \in \M_n$ is doubly stochastic, then there exist nonnegative constants $c_1,c_2,\ldots,c_{n^2}$ satisfying $\sum_{i = 1}^{n^2} c_i = 1$ and permutation matrices $P_1,P_2,\ldots,P_{n^2}$ such that
	\begin{equation*}
		D = \sum_{i = 1}^{\,n^2} c_i P_i.
	\end{equation*}
\end{theorem}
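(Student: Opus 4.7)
The plan is to reduce Birkhoff's theorem to Hall's marriage theorem (Theorem \ref{Thm:Hall}), using the latter to extract one permutation matrix at a time from $D$ and peel it off.

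The first step is to associate with $D=(d_{ij})$ the bipartite graph $G=(V_1,V_2,E)$ where $V_1$ indexes rows, $V_2$ indexes columns, and $(i,j)\in E$ iff $d_{ij}>0$. I would then verify Hall's condition: for any $W\subset V_1$, if $U\subset V_2$ denotes the set of columns connected to $W$, then all positive entries of $D$ in rows of $W$ lie in columns of $U$. Since each row sums to $1$, the total mass in those rows is $|W|$; since each column sums to $1$, the total mass in the columns of $U$ is $|U|$. Thus $|W|\le |U|$, and Hall's theorem supplies a $V_1$-perfect matching, i.e.\ a permutation $\sigma$ with $d_{i,\sigma(i)}>0$ for every $i$. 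Let $P_\sigma$ be the associated permutation matrix.

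Next, set $c=\min_i d_{i,\sigma(i)}>0$. If $c=1$ then $D=P_\sigma$ and we are done. Otherwise consider
\begin{equation*}
D' = \frac{1}{1-c}\bigl(D-cP_\sigma\bigr).
\end{equation*}
A direct check shows $D'$ has nonnegative entries (by definition of $c$), and each row and column of $D-cP_\sigma$ sums to $1-c$, so $D'$ is again doubly stochastic. Moreover, the position $(i_0,\sigma(i_0))$ achieving the minimum becomes a new zero entry, so the number of nonzero entries of $D'$ is strictly smaller than that of $D$. I would then induct on the number of nonzero entries (or equivalently on $n^2$): applying the induction hypothesis to $D'$ yields a convex decomposition $D'=\sum_j c'_j P_j$, and substituting back gives
\begin{equation*}
D = cP_\sigma + (1-c)\sum_j c'_j P_j,
\end{equation*}
which is the desired convex combination since $c+(1-c)\sum_j c'_j = 1$. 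The base case is when $D$ has exactly $n$ nonzero entries, in which case $D$ itself is already a permutation matrix.

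The main obstacle is the verification of Hall's condition, which is the only place where the defining constraint of double stochasticity enters in a nontrivial way; the rest of the argument is a clean peeling induction. A minor bookkeeping issue is bounding the number of terms by $n^2$: since each peeling step kills at least one nonzero entry and $D$ has at most $n^2$ of them, the process terminates in at most $n^2$ steps, yielding the stated bound (the sharper bound $n^2-n+1$ alluded to in the statement follows by noting the final matrix must have at least $n$ nonzero entries).
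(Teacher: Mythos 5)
Your proposal is correct and follows essentially the same route as the paper's proof (the Dulmage--Halperin argument): build the bipartite graph of positive entries, verify Hall's condition by the row/column mass count, extract a permutation matrix, peel off the minimal entry, and induct on the number of nonzero entries. The only cosmetic difference is that you verify Hall's condition directly rather than by contradiction.
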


\begin{proof}
	We proceed by induction on the number of nonzero entries of the $n\times n$ doubly stochastic matrix $D$. Note that $D$ has at least $n$ nonzero entries, otherwise some row will sum to $0$. If $D$ has exactly $n$ nonzero entries, then every row and column contains exactly one nonzero entry, which is $1$, so $D$ is a permutation matrix.
	
	Suppose that $D$ has $m\geq n+1$ nonzero entries and that the conclusion holds for any doubly stochastic matrix with less than $m$ nonzero entries. Consider the bipartite graph $G=(V_1,V_2,E)$, in which 
	$V_1=\{1,\ldots,n\}$ is the set of rows, $V_2=\{1,\ldots,n\}$ is the set of columns, and $(i,j)\in E$ if and only if $d_{ij}>0$. If $\{i_1,i_2,\ldots,i_k\}=W\subset V_1$ has less than $|W|=k$ neighbors, say $\{j_1,j_2,\ldots,j_m\}$ where $m<k$, then
	\vspace{-1pt}
	\begin{equation*}
	k = \sum_{\iota=1}^k \sum_{j=1}^n d_{i_\iota j} = \sum_{\iota=1}^k \sum_{\ell=1}^m d_{i_\iota j_\ell} \leq \sum_{i=1}^n \sum_{\ell=1}^m d_{i j_\ell} = \sum_{\ell=1}^m \sum_{i=1}^n d_{i j_\ell} = m < k.
	\end{equation*}
	This contradiction shows that every $W\subset V_1$ contains at least $|W|$ neighbors. Hall's condition from Theorem \ref{Thm:Hall} thus ensures that a $V_1$-perfect matching exists in $G$. Since $|V_1|=|V_2|$, the matching is one-to-one, and it then corresponds to some permutation matrix $P$ satisfying $p_{ij}=1$ only if $d_{ij}>0$.
	
	If $\mu:=\min\{d_{ij}: p_{ij}=1\}$ denotes the smallest nonzero entry of $D$ among those corresponding to the position of a $1$ in $P$, then $\tfrac{1}{1-\mu}(D-\mu P)$ is a doubly stochastic matrix with one less nonnegative entry than $D$. The induction hypothesis provides nonnegative constants $a_1,a_2,\ldots,a_{n^2}$ satisfying $\sum_{i = 1}^{n^2} a_i = 1$ such that
	\vspace{-2pt}
	\begin{equation*}
		\tfrac{1}{1-\mu}(D-\mu P) = \sum_{i = 1}^{\,n^2} a_i P_i.
	\end{equation*}
	Multiplying both sides by $1-\mu$ and adding $\mu P$ yield the desired result.
\end{proof}

The set of all $n\times n$ doubly stochastic matrices is a convex polytope, and
Birkhoff's theorem ensures that it is the unique convex polytope having the $n\times n$ permutation matrices as its extreme points.

Several proofs of Birkhoff's theorem exist. Between 1953 and 1963, at least eight different proofs were given: Hoffman \& Wielandt \cite{HoffmanWielandt1953}, Dulmage \& Halperin \cite{Dulmage1955}, Hammersley \& Mauldon \cite{Hammersley1956}, Mirsky \cite{Mirsky1958}, Berge \cite{berge1958theorie, Berge1971}, Vogel \cite{Vogel1961}, R\'ev\'esz \cite{Revesz1962} and Ryser \cite{Ryser1963}. The proof presented above is from
Dulmage \& Halperin \cite{Dulmage1955}.


\subsection{Hardy, Littlewood and P\'olya's theorem}
\label{subsec:HLP}

Majorization and doubly stochastic matrices are closely related concepts. The following theorem of
Hardy, Littlewood, and P\'olya characterizes this connection \cite{Hardy}.

\begin{theorem}[Hardy, Littlewood, P\'olya]\label{Lemma:MajorStoch}
	If $\vec{y}\prec\vec{x}$, then there exists a doubly stochastic matrix $D$ such that $\vec{y} = D \vec{x}$. 
\end{theorem}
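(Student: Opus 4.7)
The plan is to construct $D$ as a product of elementary doubly stochastic matrices via \emph{T-transforms}: matrices of the form $T=(1-\alpha)I_n+\alpha Q$ with $\alpha\in[0,1]$ and $Q$ a transposition matrix. These are manifestly doubly stochastic, and products of doubly stochastic matrices remain so. Because permutation matrices are doubly stochastic, one may first apply them to arrange both $\vec{x}$ and $\vec{y}$ in nonincreasing order, reducing to the case where both lists are already sorted.

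I would then induct on $\ell(\vec{x},\vec{y})=|\{i:x_i\neq y_i\}|$. The base case $\ell=0$ is handled by $D=I_n$. For $\ell\geq 1$, let $j$ be the smallest index with $x_j\neq y_j$; the partial-sum inequality together with the minimality of $j$ forces $x_j>y_j$. Since the total sums coincide, there exists a smallest index $l>j$ with $x_l<y_l$, and by the choice of $l$ one has $x_m\geq y_m$ for every $j\leq m<l$. Setting $\delta=\min(x_j-y_j,\,y_l-x_l)>0$ and $\alpha=\delta/(x_j-x_l)\in(0,1]$, the T-transform on coordinates $j,l$ yields $\vec{x}'=T\vec{x}$ with $x'_j=x_j-\delta$, $x'_l=x_l+\delta$, and all other entries unchanged. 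I then rearrange $\vec{x}'$ into nonincreasing order by a permutation $P$, producing $\vec{x}''=P\vec{x}'$.

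Two facts would close the induction: (i) $\vec{y}\prec\vec{x}''$, and (ii) $\ell(\vec{x}'',\vec{y})<\ell(\vec{x},\vec{y})$. For (i), the total sums of $\vec{x}''$ and $\vec{y}$ agree, and the partial-sum inequalities $\sum_{i=1}^m x''_i\geq\sum_{i=1}^m y_i$ reduce, via $\sum_{i=1}^m x''_i\geq\sum_{i=1}^m x'_i$, to verifying $\sum_{i=1}^m x'_i\geq\sum_{i=1}^m y_i$ for every $m$. This is immediate for $m<j$ and for $m\geq l$ (the terms $-\delta$ and $+\delta$ cancel), while for $j\leq m<l$ one computes
\begin{equation*}
\sum_{i=1}^m (x_i-y_i) = (x_j-y_j) + \sum_{i=j+1}^m (x_i-y_i) \geq x_j-y_j \geq \delta,
\end{equation*}
using $x_i=y_i$ for $i<j$ and $x_i\geq y_i$ for $j<i<l$. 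Property (ii) follows from the choice of $\delta$, which forces $x'_j=y_j$ or $x'_l=y_l$, so the multiset of entries of $\vec{x}'$ acquires a new copy of an entry of $\vec{y}$ that previously had no matching partner in $\vec{x}$. The induction hypothesis then yields a doubly stochastic $D'$ with $\vec{y}=D'\vec{x}''$, and $D=D'PT$ is the desired doubly stochastic matrix satisfying $\vec{y}=D\vec{x}$.

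The main obstacle is the rigorous verification of (ii): since $\vec{x}'$ may fail to be sorted after $T$ is applied, the permutation $P$ could conceivably disturb previously matched coordinates. A short but careful case analysis on where the two perturbed entries $x_j-\delta$ and $x_l+\delta$ land within the sorted list is required to show that the new equality introduced at $j$ or $l$ is not offset by a loss elsewhere, so that the overall count of agreements between $\vec{x}''$ and $\vec{y}$ strictly exceeds that between $\vec{x}$ and $\vec{y}$.
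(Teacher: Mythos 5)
Your overall strategy (a product of T-transforms, inducting on a count of disagreements) is the classical Muirhead--Hardy--Littlewood--P\'olya route and is genuinely different from the paper's proof, which inducts on the dimension $n$ and applies the induction hypothesis to truncated $(n-1)$-vectors. However, there is a real gap exactly where you flagged it: step (ii) is false for your choice of indices. Take $\vec{x}=(11,\,10.5,\,5,\,0)$ and $\vec{y}=(10,\,9,\,4,\,3.5)$, both nonincreasing, with $\vec{y}\prec\vec{x}$ (partial sums $10\le 11$, $19\le 21.5$, $23\le 26.5$, $26.5=26.5$). Your rules give $j=1$, $l=4$, $\delta=\min(1,3.5)=1$, hence $\vec{x}'=(10,\,10.5,\,5,\,1)$ and, after sorting, $\vec{x}''=(10.5,\,10,\,5,\,1)$. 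Every coordinate of $\vec{x}''$ still differs from the corresponding coordinate of $\vec{y}$, so $\ell(\vec{x}'',\vec{y})=4=\ell(\vec{x},\vec{y})$: the new equality $x'_j=y_j$ is ``wasted'' because the sorting permutation moves that entry to position $2$, where it faces $y_2=9$. The multiset heuristic you invoke does not transfer to the coordinatewise count $\ell$ (and a multiset-intersection count can itself stall: for $\vec{x}=(6,2,1,1)$, $\vec{y}=(4,3,2,1)$ the gained $3$ is offset by the lost $2$). So, as written, your measure need not strictly decrease and the induction does not terminate as claimed; step (i), by contrast, is fine.

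The standard repair is to choose the indices the other way around: let $j$ be the \emph{largest} index with $x_j>y_j$ and $l$ the smallest index greater than $j$ with $x_l<y_l$. Then $x_i=y_i$ for all $j<i<l$, and since $x'_j=x_j-\delta\ge y_j\ge y_i=x_i\ge y_l\ge x_l+\delta=x'_l$ for those $i$, the vector $\vec{x}'=T\vec{x}$ is already nonincreasing: no re-sorting permutation is needed, only coordinates $j$ and $l$ change, and the choice of $\delta$ forces an equality at $j$ or at $l$, so $\ell$ strictly drops. With that modification your argument closes (your verification of $\vec{y}\prec\vec{x}'$ goes through verbatim, indeed more easily since the intermediate differences vanish). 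Alternatively, one can follow the paper's dimension induction, which sidesteps the counting issue entirely.
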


\begin{proof}
	We proceed by induction. The case $n=2$ is trivial, so assume that the implication holds for all dimensions up to $n-1$, and let $\vec{x},\vec{y}\in\R^n$. Since $\widetilde{\vec{x}}$ and $\widetilde{\vec{y}}$ are obtained from $\vec{x}$ and $\vec{y}$ by permutations, and since the product of a doubly stochastic matrix and a permutation matrix is doubly stochastic, assume without loss of generality that $\vec{x}=\widetilde{\vec{x}}$ and $\vec{y}=\widetilde{\vec{y}}$. Since $\vec{y} \prec \vec{x}$, it follows that $x_n \leq y_1 \leq x_1$. Therefore, there exists an integer $k\in\{1,2,\ldots,n\} $ such that 
	\begin{equation*}
	x_k \leq y_1 \leq x_{k-1}.
	\end{equation*}
	Moreover, since $x_k \leq y_1 \leq x_1$, it follows that $y_1 = tx_1 +(1-t)x_k$ for some $t\in [0,1]$.
	Let $T_1 = tI+(1-t)P$, where $P$ is the permutation matrix which interchange the coordinate $1$ and $k$, and note that $T_1$ is doubly stochastic. Then
	\begin{equation*}
	T_1 \vec{z} = \big( tz_1+(1-t)z_k , \, z_2 , \ldots , z_{k-1} ,\, (1-t)z_1+tz_k , \, z_{k+1} , \ldots ,\, z_n \big)^{\!\intercal}
	\end{equation*}
	for all $\vec{z}\in \R^n$. Note that the first coordinate of $T_1\vec{x}$ is $y_1$. Let
	\begin{align*}
		\vec{y'} &= (y_2,\dots,y_n)^{\intercal},\\
		\vec{x'} &= (x_2,\dots,x_{k-1},(1-t)x_1+tx_k,x_{k+1},\dots,x_n)^{\intercal},
	\end{align*}
	and observe that $T_1\vec{x} = \smash{\left[\begin{smallmatrix}
		y_1\\\vec{x'}
	\end{smallmatrix} \right]}$. Since 
	\begin{equation*}
	x_1\geq \dots \geq x_{k-1} \geq y_1 \geq y_2 \geq \dots \geq y_n,
	\end{equation*}
	it follows that
	\begin{equation*}
	\sum_{j=2}^m y_j' = \sum_{j=2}^m y_j \leq \sum_{j=2}^m x_j = \sum_{j=2}^m x_j'\, ~\quad\mathrm{for}~~m=2,3,\dots, k-1.
	\end{equation*}
	For $m\in\{k,k+1,\ldots,n\}$, 
	\vspace{-2pt}
	\begin{align*}
		\sum_{j=2}^m x_j' &= \sum_{j=2}^{k-1} x_j +[(1-t)x_1 + tx_k] + \sum_{j=k+1}^m x_j\\[-1pt]
		&= \sum_{j=1}^m x_j -tx_1 + (t-1)x_k = \sum_{j=1}^m x_j -y_1\\[-1pt]
		&\geq \sum_{j=1}^m y_j -y_1 = \sum_{j=2}^m y_j = \sum_{j=2}^m y_j'.\\[-21pt]
	\end{align*}
	Moreover, the inequality becomes an equality when $m=n$ since $\vec{y}\prec \vec{x}$. Therefore, $\vec{y'} \prec \vec{x'}$ and the induction hypothesis ensures that there exists a doubly stochastic matrix $D'$ such that $\vec{y'} = D' \vec{x'}$. Thus,
	\begin{equation*}
	\begin{bmatrix}
		1&0\\0&D'
	\end{bmatrix} \!T_1 \vec{x} = \begin{bmatrix}
	1&0\\0&D'
	\end{bmatrix}\!\begin{bmatrix}
		y_1\\\vec{x'}
	\end{bmatrix}  = \begin{bmatrix}
		y_1\\\vec{y'}
	\end{bmatrix}  = \vec{y}.
	\end{equation*}
	Since both $\left[\begin{smallmatrix}
		1&0\\0&D'
	\end{smallmatrix}\right]$ and $T_1$ are doubly stochastic and the product of doubly stochastic matrices is doubly stochastic, this establishes the desired result.
\end{proof}

\vspace{-13pt}
\subsection{The Schur--Ostrowski criterion}\label{subsec:S-O}
\vspace{-3pt}

The Schur--Ostrowski criterion is a powerful  tool for establishing the Schur-convexity of a symmetric function. The following theorem appears in \cite[p.~259]{Roberts}.

\begin{theorem}[Schur--Ostrowski]
	Suppose $f$ is a symmetric function on $\R^n$ with continuous partial derivatives. Then $f$ is Schur-convex if and only if
	\begin{equation}\label{eq:S-O}
		(x_i-x_j)\bigg( \frac{\partial}{\partial x_i}-\frac{\partial}{\partial x_j} \bigg) f(x_1, x_2, \ldots, x_n)\geq 0 
	\end{equation}
	for all $1\leq i<j\leq n$, with equality if and only if $x_i=x_j$.
\end{theorem}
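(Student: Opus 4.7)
The plan is to establish both directions by combining local differentiation with the structure of majorization. For the forward direction, I would fix indices $i < j$ and assume $x_i > x_j$ (the case $x_i = x_j$ being trivial and $x_i < x_j$ symmetric). Consider the one-parameter family
\[
\vec{x}(t) = (x_1, \ldots, x_i - t, \ldots, x_j + t, \ldots, x_n)
\]
for $t \in [0, (x_i - x_j)/2]$, obtained by lowering the $i$th coordinate and raising the $j$th by the same amount. Since $\vec{x}(t)$ arises from $\vec{x}$ by transferring mass from a higher entry to a lower one, a direct check of the partial-sum definition shows $\vec{x}(t) \prec \vec{x}$. Schur-convexity then gives $f(\vec{x}(t)) \leq f(\vec{x}(0))$, and differentiating via the chain rule at $t = 0$ yields $-\partial_i f(\vec{x}) + \partial_j f(\vec{x}) \leq 0$. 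Multiplying by $(x_i - x_j) > 0$ produces \eqref{eq:S-O}.

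For the converse, given $\vec{y} \prec \vec{x}$, I would reduce to showing that $f$ does not increase along any T-transform: the replacement of two coordinates $(a, b)$ by $((1-s)a + sb,\, (1-s)b + sa)$ for some $s \in [0, 1/2]$. A classical result of Muirhead (derivable from the Hardy--Littlewood--P\'olya theorem, Theorem \ref{Lemma:MajorStoch}, combined with Birkhoff's theorem, Theorem \ref{Theorem:Birkhoff}) asserts that $\vec{y} \prec \vec{x}$ if and only if $\vec{y}$ is obtainable from $\vec{x}$ by a finite sequence of such T-transforms, possibly composed with permutations. Since $f$ is symmetric, it suffices to show that a T-transform between any two chosen coordinates, say the first two, does not increase $f$. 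Assuming $x_1 > x_2$, set
\[
g(s) = f\big((1-s)x_1 + s x_2,\, (1-s)x_2 + s x_1,\, x_3, \ldots, x_n\big)
\]
for $s \in [0, 1/2]$. A direct computation gives $g'(s) = -(x_1 - x_2)(\partial_1 f - \partial_2 f)$ evaluated at the transported argument. The first two coordinates of that argument differ by $(1 - 2s)(x_1 - x_2) > 0$ for $s < 1/2$, so \eqref{eq:S-O} at indices $1$ and $2$ forces $\partial_1 f - \partial_2 f \geq 0$, whence $g'(s) \leq 0$. Integrating yields $g(1/2) \leq g(0)$, and iterating across the finite T-transform chain connecting $\vec{x}$ to $\vec{y}$ gives $f(\vec{y}) \leq f(\vec{x})$.

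The main obstacle is the Muirhead-style decomposition of majorization into finitely many T-transforms, which is a genuinely combinatorial statement lying beyond what the excerpt has already developed. One approach is a greedy algorithm on sorted vectors: at each stage, locate the smallest index $j$ at which $\widetilde{\vec{x}}$ and $\widetilde{\vec{y}}$ disagree and the smallest $k > j$ at which the cumulative discrepancy first reverses sign, then apply a T-transform of indices $j$ and $k$ with parameter chosen to drive one of the two coordinates onto its target value; a counting argument shows that this terminates in at most $n - 1$ steps. Finally, the equality clause of the theorem --- that \eqref{eq:S-O} is strict whenever $x_i \neq x_j$ corresponds to strict Schur-convexity of $f$ --- follows along the same lines: strict inequality in the local condition forces $g'(s) < 0$ on $(0, 1/2)$ along every nondegenerate T-transform, producing strict decrease whenever $\vec{y} \prec \vec{x}$ with $\vec{y}$ not a permutation of $\vec{x}$; conversely, strict Schur-convexity applied to the perturbation used in the forward direction yields strict inequality in the derivative estimate.
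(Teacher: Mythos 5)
Your argument is correct in its main equivalence, but it takes a genuinely different route from the paper. You prove necessity by differentiating $f$ along a Robin Hood transfer $\vec{x}(t)$ (which majorizes downward, as a direct partial-sum check confirms), and sufficiency by showing $f$ is nonincreasing along every T-transform and then invoking the classical lemma that $\vec{y}\prec\vec{x}$ implies $\vec{y}$ is reachable from $\vec{x}$ by finitely many T-transforms and permutations; this is the traditional Schur--Ostrowski/Marshall--Olkin argument. The paper instead passes to partial sums $\overline{x}_k$, observes that on sorted vectors majorization becomes the componentwise order (with the last coordinate pinned by the equal-sum condition), sets $g(z_1,\ldots,z_n)=f(z_1,z_2-z_1,\ldots,z_n-z_{n-1})$ so that $g(\overline{\vec{x}})=f(\vec{x})$, and reduces Schur-convexity to coordinatewise monotonicity of $g$, which the chain rule identifies with $\partial f/\partial x_k$ being nonincreasing in $k$ on the sorted cone. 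What the paper's change of variables buys is that no decomposition lemma is needed at all; what your route buys is a more local picture that handles both directions by the same one-parameter perturbation and adapts naturally to strictness statements.

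Two caveats. First, the T-transform decomposition does not follow from the \emph{statements} of Theorem \ref{Lemma:MajorStoch} and Theorem \ref{Theorem:Birkhoff}: a convex combination of permutation matrices is not a product of T-transforms. Your greedy algorithm is the standard fix and works, but you do not actually need it here: the paper's inductive proof of Theorem \ref{Lemma:MajorStoch} already exhibits $\vec{y}$ as the image of $\vec{x}$ under a finite product of matrices of the form $tI+(1-t)P$ with $P$ a transposition, i.e., exactly the T-transform chain you require, so your ``main obstacle'' is available from material the paper develops. Second, your treatment of the equality clause is not rigorous in the converse direction: strict Schur-convexity only forces the one-sided derivative of $t\mapsto f(\vec{x}(t))$ at $t=0$ to be $\leq 0$, not $<0$, so strict inequality in \eqref{eq:S-O} cannot be obtained merely by differentiating. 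This is a genuine (if minor) gap in that clause, though the paper's own proof establishes only the nonstrict equivalence and leaves the equality assertion untouched as well.
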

\begin{proof}
	Since $f$ is a symmetric function on $\R^n$, without loss of generality suppose that $x_1 \geq x_2 \geq \cdots \geq x_n$. 
	Then the desired result is equivalent to showing that $\frac{\partial}{\partial x_k} f(\vec{x})$ is nonincreasing in $k=1,2,\ldots,n$ for all $\vec{x}$ with nonincreasing coefficients. 
	Define $\overline{x}_k = \sum_{i=1}^{\smash{k}} \widetilde{x}_k$ for $k=1,2,\ldots,n$ (and similarly for $\vec{y}$). By definition, $\vec{x} \prec \vec{y}$ if and only if 
	\begin{equation*}
		\overline{x}_n = \overline{y}_n
		\quad \text{and} \quad
		\overline{x}_k \leq \overline{y}_k
		\quad \text{for}~ 1 \leq k \leq n-1.
	\end{equation*}
	In other words, $\vec{x} \prec \vec{y}$ if and only if $\overline{\vec{x}} \leq \overline{\vec{y}}$ in the componentwise ordering (that is, $\vec{u} \leq \vec{v}$ if and only if $u_k \leq v_k$ for $k=1,2,\ldots,n$). But for the componentwise ordering, it is immediate that $\vec{u} \leq \vec{v} \implies g(\vec{u}) \leq g(\vec{v})$ if and only if the function $g:\R^n \to \R$ is increasing in each argument. Defining
	\begin{equation*}
		g(z_1,z_2,\ldots,z_n) = f(z_1,z_2-z_1,\ldots,z_n-z_{n-1}),
	\end{equation*}
	we find that $g(\overline{\vec{x}}) = f(\vec{x})$. It follows that $\vec{x} \prec \vec{y} \implies f(\vec{x}) \leq f(\vec{y})$ if and only if $\overline{\vec{x}} \leq \overline{\vec{y}} \implies g(\overline{\vec{x}}) \leq g(\overline{\vec{y}})$, which in turn is verified if and only if $g$ is increasing in each argument. 
	Interpreting $f$ as a function of $\vec{x}$, we can write $g(\vec{z})=f(x_1(\vec{z}),x_2(\vec{z}),\ldots,x_n(\vec{z}))$, where $x_1(\vec{z}) = z_1$ and $x_k(\vec{z})=z_k-{z_{k-1}}$ for $k=2,3,\ldots,n$. Then, if $k=1,2,\ldots,n-1$,
	\begin{align*}
		\frac{d g}{d z_k} (\vec{z}) &= \frac{d f}{d z_k} (z_1,z_2-z_1,\ldots,z_n-z_{n-1}) \\
		&= \frac{\partial f}{\partial x_k} \frac{\partial x_k}{z_k} + \frac{\partial f}{\partial x_{k+1}} \frac{\partial x_{k+1}}{z_k} \\
		&= \frac{\partial f}{\partial x_k} - \frac{\partial f}{\partial x_{k+1}} ,
	\end{align*}
	while $\frac{d g}{d z_n} (\vec{z}) = \frac{\partial f}{\partial x_n}(\vec{x})$. 
	Hence, $g$ is increasing in each argument if and only if $\frac{\partial f}{\partial x_n}(\vec{x})\geq 0$ and 
	$\left(\frac{\partial f}{\partial x_k} - \frac{\partial f}{\partial x_{k+1}}\right)\!(\vec{x}) \geq 0$, that is, if and only if $\frac{\partial}{\partial x_k} f(\vec{x})$ is decreasing in $k=1,2,\ldots,n$, which completes the proof.
\end{proof}


\section{Hunter's positivity theorem}\label{Section:Hunter}

Hunter proved that the even degree CHS polynomials \eqref{eq:CHS} are positive definite \cite{Hunter}. Hunter's  theorem is a recurring theme in our paper and motivates a lot of our work.

\begin{theorem}[Hunter]\label{Theorem:Hunter}
Let $d \geq 0$ be an even integer.  Then $h_d(x_1,x_2,\ldots,x_n)$ is a positive definite function on 
$\R^n$.  That is, $h_d(x_1,x_2,\ldots,x_n) > 0$ for all $\vec{x} \neq \vec{0}$.
\end{theorem}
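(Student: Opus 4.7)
The plan is to give a probabilistic proof by realizing $h_d(\vec{x})$ as a scaled moment of a simple real-valued random variable; positivity of $h_d$ for even $d$ will then follow from positivity of an even moment. The starting observation is that the generating function \eqref{eq:CHSGenerating} is, up to notation, a product of moment generating functions of exponential random variables. Specifically, let $Y_1, Y_2, \ldots, Y_n$ be independent and identically distributed $\text{Exp}(1)$ random variables and define $S := x_1 Y_1 + x_2 Y_2 + \cdots + x_n Y_n$. By the independence property of expectations from Subsection \ref{Subsection:Probability},
\begin{equation*}
\E\!\left[e^{tS}\right] = \prod_{i=1}^n \E\!\left[e^{t x_i Y_i}\right] = \prod_{i=1}^n \frac{1}{1 - x_i t},
\end{equation*}
valid on a neighborhood of the origin, say $|t| < \min_i |x_i|^{-1}$ (with the convention $|x_i|^{-1} = +\infty$ when $x_i = 0$). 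Matching Taylor coefficients of $t^d$ against \eqref{eq:CHSGenerating} yields the key identity
\begin{equation*}
h_d(x_1, x_2, \ldots, x_n) = \frac{\E[S^d]}{d!}.
\end{equation*}

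Granting this identity, the conclusion is essentially immediate when $d$ is even. Since $S^d \geq 0$ pointwise, one has $\E[S^d] \geq 0$ and hence $h_d(\vec{x}) \geq 0$. To upgrade this to strict positivity when $\vec{x} \neq \vec{0}$, choose an index $j$ with $x_j \neq 0$. The random variable $x_j Y_j$ has an absolutely continuous distribution on $\R$, and $S$ equals $x_j Y_j$ plus an independent random variable built from the remaining $Y_i$'s, so $S$ itself has an absolutely continuous distribution. In particular $\P(S = 0) = 0$, hence $S^d > 0$ almost surely, which forces $\E[S^d] > 0$ and therefore $h_d(\vec{x}) > 0$.

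The substantive work is bookkeeping: justifying that the power series expansion of $\E[e^{tS}]$ really coincides termwise with the generating function identity \eqref{eq:CHSGenerating}. This is handled by dominated convergence on the disk $|t| < \min_i |x_i|^{-1}$, which is nonempty whenever $\vec{x} \neq \vec{0}$. The case $\vec{x} = \vec{0}$ is trivial, and the case where some but not all $x_i$ vanish reduces to fewer variables by simply dropping the vanishing summands from $S$. No genuine obstacle remains; the real content of the theorem has been repackaged as the positive-even-moment principle applied to the nondegenerate random variable $S$.
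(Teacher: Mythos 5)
Your proposal is correct and follows essentially the same route as the paper's probabilistic proof in Subsection \ref{Subsection:CHSExpectation}: both realize $d!\,h_d(\vec{x})$ as $\E\big[\langle \X,\vec{x}\rangle^d\big]$ for an iid standard exponential vector and conclude positivity from the nonnegativity of an even power. The only (harmless) differences are that you extract the identity from the moment generating function rather than the multinomial expansion, and you obtain strict positivity from absolute continuity of $S$ (convolution with the absolutely continuous law of $x_jY_j$) instead of the paper's contradiction-with-independence argument; both steps are sound.
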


\begin{remark}
The case $d=0$ of Hunter's theorem is trivial, and the case $d=2$ can be addressed with a simple algebraic argument. In particular,  \eqref{eq:CHSPowerSumRec} implies
\begin{equation}
h_2(x_1,x_2,\ldots,x_n)=\frac{1}{2} \sum_{i=1}^n x_i^2 + \frac{1}{2} (x_1+x_2 + \cdots+x_n)^2.\label{eq:CHS2Bound}
\end{equation}
We see that $h_2(x_1,x_2,\dots,x_n) > 0$ unless $x_1=x_2=\cdots=x_n=0$. We remark that the formula $4h_{4}=h_{3}p_{1}+h_{2}p_{2}+h_1 p_{3} + p_4$ also follows from \eqref{eq:CHSPowerSumRec} but does not appear to admit a similar sum-of-squares proof of positivity for the case $d=4$.
\end{remark}

Hunter's positivity theorem for CHS polynomials has been rediscovered and proved many times. Subsections \ref{Subsection:CHSHunter} and \ref{Subsection:TaoCHS} contain Hunter's original proof and Tao's proof, respectively. Subsections \ref{Subsection:CHSExpectation} and \ref{Subsection:CHSExtension} contain newer probabilistic proofs of positive definiteness. Additional proofs of positive definiteness can be found, for example, in \cite[Lem.~3.1]{Barvinok}, \cite{Baston}, \cite[Cor.~17]{GOOY}, and \cite[Thm.~2.3]{Roventa}.


\subsection{Hunter's proof}\label{Subsection:CHSHunter}

Hunter established a lower bound for the even degree CHS polynomials to prove positive definiteness. Theorem \ref{Theorem:HunterBound} below is \cite[Thm.~1]{Hunter} and it immediately implies Theorem \ref{Theorem:Hunter}. In particular, the CHS polynomials $h_d(\vec{x})$ are homogeneous and can always be scaled so that $\vec{x}\neq \vec{0}$ lies on the unit sphere in $\R^n$.  We note that a generalization of Theorem \ref{Theorem:HunterBound} for fractional degrees is proved in Subsection \ref{Subsection:CHSExtension}.

\begin{theorem}[Hunter]\label{Theorem:HunterBound}
Let $d \geq 2$ be an even integer. The CHS polynomials satisfy 
\begin{equation*}
h_d(x_1, x_2, \ldots, x_n)\geq \frac{1}{2^{\frac{d}{2}} (\frac{d}{2})!} 
\end{equation*} on the unit sphere sphere $x_1^2+x_2^2+\cdots +x_n^2=1$. Moreover, equality holds if and only if $d=2$ and $x_1+x_2+\cdots +x_n=0$.
\end{theorem}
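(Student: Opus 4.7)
My plan is to combine the power-sum expansion of $h_d$ with the probabilistic representation of $h_d$ as a scaled moment of a linear combination of exponentials. First I would use the expansion \eqref{eq:hdgen} to write $h_d(\vec{x}) = \sum_{\bvec{\pi} \vdash d} p_{\bvec{\pi}}(\vec{x})/z_{\bvec{\pi}}$ and, since $d$ is even, isolate the partition $\bvec{\pi}^* = (2, 2, \ldots, 2)$ of length $d/2$. Its coefficient is $z_{\bvec{\pi}^*} = 2^{d/2}(d/2)!$ by \eqref{eq:zI}, and on the unit sphere one has $p_{\bvec{\pi}^*}(\vec{x}) = p_2^{d/2} = 1$, so this single contribution equals the claimed lower bound $1/(2^{d/2}(d/2)!)$. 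The theorem thus reduces to showing that the residual
\begin{equation*}
R(\vec{x}) := \sum_{\bvec{\pi} \vdash d,\, \bvec{\pi} \neq \bvec{\pi}^*} \frac{p_{\bvec{\pi}}(\vec{x})}{z_{\bvec{\pi}}}
\end{equation*}
is nonnegative on the unit sphere, with the announced equality condition.

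For the base case $d = 2$, the only surviving partition is $(1,1)$, and its contribution is $p_1^2/2 = (x_1 + \cdots + x_n)^2/2 \geq 0$, with equality precisely when $x_1 + \cdots + x_n = 0$, matching the equality condition in the statement. For even $d \geq 4$, I would invoke the probabilistic representation
\begin{equation*}
h_d(\vec{x}) = \frac{1}{d!}\, \E\!\left[\bigg(\sum_{i=1}^n x_i E_i\bigg)^{\!d}\,\right],
\end{equation*}
where $E_1, \ldots, E_n$ are iid standard exponentials and $S := \sum_i x_i E_i$; this is obtained by matching $\E[e^{tS}] = \prod_i (1 - x_i t)^{-1}$ with the generating function \eqref{eq:CHSGenerating}. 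Since $(d-1)!! = d!/(2^{d/2}(d/2)!)$ is the $d$-th moment of a standard Gaussian $Z$, the claimed bound becomes the moment comparison $\E[S^d] \geq \E[Z^d]$, and on the unit sphere the random variables $S$ and $Z$ share the common variance $p_2 = 1$.

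To establish the moment comparison, I would compute the cumulants $\kappa_k(S) = (k-1)!\, p_k(\vec{x})$ of $S$ using \eqref{eq:CumulantGen}, and feed them into the Bell-polynomial expansion $\E[S^d] = B_d(\kappa_1, \ldots, \kappa_d)$. In this expansion, the monomial $\kappa_2^{d/2}$ associated to $\bvec{\pi}^*$ contributes exactly $\E[Z^d]$, so $R(\vec{x})$ is precisely $1/d!$ times the sum of all other Bell-polynomial monomials, each weighted by a nonnegative combinatorial coefficient. The main obstacle is the nonnegativity of this residual for $d \geq 4$: because $p_k$ for odd $k$ may have either sign, no term-by-term bound is available. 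A natural regrouping would pair partitions by their multiset of odd parts (necessarily of even cardinality when $d$ is even) in search of a sum-of-squares structure extending the clean factorization seen at $d = 2$. An alternative avenue is induction on $n$ via the identity $h_d(x_1, \ldots, x_n) = \sum_{k=0}^d x_n^k\, h_{d-k}(x_1, \ldots, x_{n-1})$ combined with a Lagrange-multiplier analysis under the constraint $p_2 = 1$; the nonlinear coupling between $x_n$ and the constraint is where the real technical difficulty concentrates, and that is where I expect the proof to demand the most ingenuity.
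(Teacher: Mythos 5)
Your reduction is essentially a restatement of the theorem rather than a proof of it. Isolating the partition $\bvec{\pi}^*=(2,\ldots,2)$ in \eqref{eq:hdgen} (equivalently, the $\kappa_2^{d/2}$ monomial in the Bell expansion) and noting that its coefficient is $1/z_{\bvec{\pi}^*}=1/(2^{d/2}(d/2)!)$ correctly identifies where the constant comes from, and the translation into the moment comparison $\E[S^d]\geq\E[Z^d]$ for $S=\sum_i x_iE_i$ with unit variance is a faithful reformulation. But the entire content of Hunter's bound is exactly the nonnegativity of your residual $R(\vec{x})$ for even $d\geq 4$, and at that point you stop: you observe (correctly) that no term-by-term bound works because $p_k(\vec{x})$ can be negative for odd $k$, and you only sketch two possible strategies (a sum-of-squares regrouping by odd parts, or induction on $n$ with Lagrange multipliers) without carrying either out. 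Note also that the moment comparison cannot be deduced from any generic principle about unit-variance random variables — for Rademacher coefficients the analogous fourth moment drops below the Gaussian value — so it genuinely depends on the exponential distribution and needs an argument. In addition, the equality analysis for $d\geq 4$ (the statement requires strict inequality there) is never addressed. So there is a genuine gap: the proposal proves the case $d=2$ and sets up a plausible framework, but the theorem itself is left unproved.

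For comparison, the paper's proof proceeds by induction on $k$ with $d=2k$, working directly on the compact unit sphere: at an extremum, Lagrange multipliers together with the Euler derivation identity give $h_{2k-1}(\vec{x},x_i)=2k\,x_i\,h_{2k}(\vec{x})$; if two coordinates $a\neq b$ exist, the difference identity $h_{d}(\vec{x},a)-h_d(\vec{x},b)=(a-b)h_{d-1}(\vec{x},a,b)$ collapses this to $2k\,h_{2k}(\vec{x})=h_{2k-2}(\vec{x},a,b)$, where the augmented point has squared norm exceeding $1$, and the inductive hypothesis yields the strict bound; the constant-coordinate case is handled by a direct computation. If you want to complete your route, your second avenue (constrained stationary-point analysis) is in effect what the paper does, but the decisive ingredients are the two CHS identities above, which let the induction run on the degree $d$ rather than on $n$.
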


\begin{proof}
 We write $d=2k$ for some $k\in \N$ and proceed by induction on $k$. If $k=1$, then the bound $h_2\geq \frac{1}{2}$ follows from relation \eqref{eq:CHS2Bound}. Moreover, equality holds if and only if $x_1+x_2+\cdots +x_n=0$. The unit sphere in $\R^n$ is compact. The  CHS polynomials must therefore attain an extremum $\vec{x}$ on the unit sphere in $\R^n$. The method of Lagrange multipliers ensures there exists $\lambda$ such that
\begin{equation}
\frac{\partial h_{2k}(\vec{x})}{\partial x_i}+2\lambda x_i=0 \label{eq:Lagrange}
\end{equation}
for each $i=1, 2,\ldots, n$. Multiply each side of \eqref{eq:Lagrange} by $x_i$ and sum over all $i$ to get
\begin{equation}
\sum_{i=1}^n x_i\frac{\partial h_{2k}(\vec{x})}{\partial x_i}+2\lambda=0.\label{eq:Derivation}
\end{equation} 
The derivation $\sum_{i=1}^n x_i\frac{\partial}{\partial x_i}$ is a linear operator on the vector space of degree-$2k$ homogeneous polynomials and has eigenvalue $2k$. Therefore,  \eqref{eq:Derivation} implies
\begin{equation}
2kh_{2k}(\vec{x})+2\lambda=0.\label{eq:Lagrange2}
\end{equation} 
The classic identity $\frac{\partial h_d(\vec{x})}{\partial x_i}=h_{d-1}(\vec{x}, x_i)$ follows immediately from differentiating both sides of the generating function \eqref{eq:CHSGenerating}. Combining this with \eqref{eq:Lagrange} and \eqref{eq:Lagrange2} yields
\begin{equation}
h_{2k-1}(\vec{x}, x_i)=2kx_ih_{2k}(\vec{x})\label{eq:Stationary}
\end{equation}
for each $i=1, 2,\ldots, n$. If all the coefficients of $\vec{x}$ are identical (that is, if $\vec{x}=\pm n^{-\frac{1}{2}}(1,1,\ldots,1)$), then a computation provides
\begin{equation}
h_{2k}(\pm n^{-\frac{1}{2}}, \pm n^{-\frac{1}{2}}, \ldots, \pm n^{-\frac{1}{2}})=\frac{(n+2k-1)!}{(2k)! (n-1)! n^k}>\frac{1}{2^k k!}
\end{equation} 
and the desired bound is satisfied. Otherwise, any stationary point $\vec{x}$ must have at least two distinct coordinates. Let $a$ and $b$ denote the values of these coordinates. The identity $h_{d}(\vec{x}, a)-h_d(\vec{x}, b)=(a-b)h_{d-1}(\vec{x}, a, b)$ holds for $a\neq b$ and follows from the generating function for $h_{d}(\vec{x}, a)-h_d(\vec{x}, b)$ \cite[Lem.~1]{Hunter}. Then \eqref{eq:Stationary} implies
\begin{equation*}
(a-b)h_{2k-2}(\vec{x}, a, b)=h_{2k-1}(\vec{x}, a)-h_{2k-1}(\vec{x}, b)=2k(a-b)h_{2k}(\vec{x}),
\end{equation*} which yields $2kh_{2k}(\vec{x})=h_{2(k-1)}(\vec{x}, a, b)$. Since $\sum_{i=1}^n x_i^2+a^2+b^2> 1$, the inductive hypothesis ensures that
\vspace{-2pt}
\begin{equation*}
h_{2k}(\vec{x})=\frac{h_{2(k-1)}(\vec{x}, a,b)}{2k}>\frac{1}{2k}\cdot \frac{1}{2^{k-1}(k-1)!}=\frac{1}{2^k k!}. \qedhere
\end{equation*} 
\end{proof}

\vspace{-13pt}
\subsection{Tao's proof}\label{Subsection:TaoCHS}
\vspace{-2pt}

Tao established the positive definiteness and Schur-convexity of the CHS polynomials in his August 6, 2017 blog entry \cite[Thm.~1]{Tao}.

\begin{theorem}[Tao]\label{Thm:Tao}
Let $n\in \N$ and $d\geq 0$ be even. The following hold for $\vec{x}\in \R^n$.

\begin{enumerate}[label=(\alph*)]
\itemsep.5em
\item (Positive definiteness) $h_d(\vec{x})\geq 0$ with equality if and only if $\vec{x}=\vec{0}$.
\item (Schur-convexity) $h_d(\vec{x})\leq h_d(\vec{y})$ whenever $\vec{x}\prec \vec{y}$. Moreoever, equality holds if and only if $\vec{x}$ is permutation of $\vec{y}$.
\item (Schur--Ostrowski criterion) The relation $(x_i-x_j)\big( \frac{\partial}{\partial x_i}-\frac{\partial}{\partial x_j}\big)h_d(\vec{x})\geq 0$ holds for all $1\leq i <j\leq n$ with strict inequality unless $x_i=x_j$.
\end{enumerate}
\end{theorem}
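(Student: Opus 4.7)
I would prove (a), (b), (c) simultaneously by induction on the even integer $d \geq 0$, uniformly in the number of variables $n$. The engine of the argument is the identity
\begin{equation*}
(x_i - x_j)\!\left(\tfrac{\partial}{\partial x_i} - \tfrac{\partial}{\partial x_j}\right)\! h_d(\vec{x}) = (x_i - x_j)^2\, h_{d-2}(\vec{x}, x_i, x_j),
\end{equation*}
in which the right-hand side involves $h_{d-2}$ evaluated on $n+2$ arguments. To derive it, I would first differentiate the generating function \eqref{eq:CHSGenerating} to obtain $\frac{\partial h_d}{\partial x_i}(\vec{x}) = h_{d-1}(\vec{x}, x_i)$, and then apply the telescoping identity $h_m(\vec{y}, a) - h_m(\vec{y}, b) = (a-b)h_{m-1}(\vec{y}, a, b)$ already used in the proof of Theorem \ref{Theorem:HunterBound}.

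For the base cases, $d=0$ is trivial since $h_0 \equiv 1$, and $d=2$ follows from the sum-of-squares decomposition \eqref{eq:CHS2Bound} (which gives (a)) together with the direct computation $\partial_i h_2 - \partial_j h_2 = x_i - x_j$ (which gives (c), and hence (b) by the Schur--Ostrowski theorem of Subsection \ref{subsec:S-O}).

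For the inductive step (even $d \geq 4$, assuming the theorem for $h_{d-2}$ on any number of variables), I would deduce the three statements in the order (c), (b), (a). Statement (c) is immediate from the key identity and the inductive (a) for $h_{d-2}$: the factor $(x_i-x_j)^2$ is nonnegative, and $h_{d-2}(\vec{x}, x_i, x_j) \geq 0$ with equality only when $(\vec{x}, x_i, x_j) = \vec{0}$, which in particular forces $x_i = x_j$. Statement (b) then follows from (c): Schur-convexity comes directly from the Schur--Ostrowski theorem, while its strict form follows from strict Schur--Ostrowski via a $T$-transform argument in the spirit of the proof of Theorem \ref{Lemma:MajorStoch}, namely, any chain $\vec{x} \prec \vec{y}$ with $\vec{x}$ not a permutation of $\vec{y}$ is realized by finitely many $T$-transforms, along which strict (c) forces strict decrease at each non-trivial step.

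Finally, (a) follows from (b). Given $\vec{x} \neq \vec{0}$, set $\bar x = \tfrac{1}{n}\sum_{i=1}^n x_i$. Since $(\bar x, \ldots, \bar x) \prec \vec{x}$, Schur-convexity yields $h_d(\vec{x}) \geq h_d(\bar x, \ldots, \bar x) = \binom{n+d-1}{d} \bar x^d$, which is strictly positive when $\bar x \neq 0$ because $d$ is even. When $\bar x = 0$, the vector $\vec{x}$ is not a permutation of $(0,\ldots,0)$, and the strict form of (b) yields $h_d(\vec{x}) > h_d(\vec{0}) = 0$. I expect the main technical obstacle to be precisely the deduction of \emph{strict} Schur-convexity from strict Schur--Ostrowski, since the version of the Schur--Ostrowski theorem recorded above only delivers the non-strict conclusion; everything else reduces to routine generating-function manipulations and an immediate appeal to the inductive hypothesis.
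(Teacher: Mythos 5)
Your proposal is correct and follows essentially the same route as the paper's proof: induction on even $d$ via the identity $(x_i-x_j)\big(\tfrac{\partial}{\partial x_i}-\tfrac{\partial}{\partial x_j}\big)h_d(\vec{x}) = (x_i-x_j)^2 h_{d-2}(\vec{x},x_i,x_j)$ obtained from the generating function, deducing (c) from the inductive (a), (b) from (c) via the Schur--Ostrowski criterion, and (a) from (b) using $(\bar{x},\ldots,\bar{x})\prec\vec{x}$. Your additional care with the strict/equality cases (the $T$-transform argument for strict Schur-convexity and the explicit treatment of the mean-zero case in (a)) merely fills in details that the paper delegates to its stated form of the Schur--Ostrowski criterion, so the two arguments are substantively the same.
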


\begin{proof}
We proceed by induction on the even integer $d\geq 0$. The results hold trivially for $d=0, 2$, 
so assume that $d\geq 4$ and the claims hold for $d-2$. Apply the differential operator $(x_i-x_j)\big( \frac{\partial}{\partial x_i}-\frac{\partial}{\partial x_j}\big)$ to each side of \eqref{eq:CHSGenerating} and get
\vspace{-3pt}
\begin{equation}
(x_i-x_j)\Big( \frac{\partial}{\partial x_i}-\frac{\partial}{\partial x_j}\Big)h_d(\vec{x})=(x_i-x_j)^2h_{d-2}(\vec{x}, x_i, x_j).\label{eq:Two} \vspace{-1pt}
\end{equation} Part (c) then follows from (a) and the inductive hypothesis. Schur--Ostrowski's criterion establishes (b) from (c). Finally, (a) follows from the fact that $(x, x, \ldots, x)\prec \vec{x}$, in which $x$ is the arithmetic mean of $x_1, x_2, \ldots, x_n$. In particular, (b) implies 
\vspace{-1pt}
\begin{equation}
h_d(\vec{x})\geq h_d(x, x,\ldots, x)=x^d h_d(1, 1,\ldots, 1)=x^d\binom{n+d-1}{d}. \qedhere
\end{equation}
\end{proof}

\begin{remark}
Tao writes in his blog, ``The proof in Hunter of positive definiteness is arranged a little differently than the one above, but still relies ultimately on the identity (2). I wonder if there is a genuinely different way to establish positive definiteness that does not go through this identity.'' Tao is referring to identity \eqref{eq:Two}. The proofs of positive definiteness in the following subsections do not  go directly through \eqref{eq:Two}.
\end{remark}


\subsection{Probabilistic proof}\label{Subsection:CHSExpectation}
\vspace{-4pt}
The following proof of Theorem \ref{Theorem:Hunter} appears in \cite{Aguilar},
although the idea is from an anonymous comment in \cite{Tao}.
A \emph{standard exponential} random variable is a random variable $X$ distributed according to the PDF
\vspace{-2pt}
\begin{equation*}
f_X(x)=\begin{cases}
e^{-x} & \text{if $x\geq 0$},\\
0 & \text{if $x<0$}.
\end{cases}\vspace{-2pt}
\end{equation*}
The moments of $X$ satisfy $\E[X^k]=k!$ for nonnegative integral $k$. Let $X_1, X_2, \ldots, X_n$ be independent standard exponential random variables and let $\bvec{X}=(X_1, X_2, \ldots, X_n)$.  The linearity of expectation, independence, and the multinomial theorem imply 
\vspace{-1pt}
\begin{align*}
\E[ \inner{ \bvec{X}, \vec{x} }^d]
&= \E[( x_1 X_1+ x_2 X_2\cdots + x_nX_n)^d] \\
&= \E\!\left[ \sum_{k_1+k_2+\cdots+ k_n=d}  \frac{d!}{k_1!\,k_2!\,\cdots\, k_n!} x_1^{k_1}x_2^{k_2}\cdots x_n^{k_n}X_1^{k_1}X_2^{k_2}\cdots X_n^{k_n}  \right]\\
&= d! \sum_{k_1+k_2+\cdots+ k_n=d}  \frac{\E[ X_1^{k_1}] \E[X_2^{k_2}]\cdots \E[X_n^{k_n}]}{k_1!\,k_2!\,\cdots\, k_n!}  x_1^{k_1}x_2^{k_2}\cdots x_n^{k_n}  \\
&= d!\sum_{k_1+k_2+\cdots+ k_n=d} x_1^{k_1}x_2^{k_2}\cdots x_n^{k_n} \\
&= d! \, h_d(\vec{x})\\[-19pt]
\end{align*}
for any $\vec{x} = (x_1,x_2,\ldots,x_n) \in \R^n$  and nonnegative integer $d$. If $d\geq 0$ is even, then
\vspace{-2pt}
\begin{equation}\label{eq:NewHunter}
 h_d(\vec{x}) = \frac{1}{d!} \E\big[  \inner{ \bvec{X} , \vec{x} }^{d}  \big] \geq 0.
\end{equation}
If the expectation above is zero, then the nonnegativity of $ \inner{ \bvec{X} , \vec{x} } ^{d}$ ensures that  
\begin{align}
x_1X_1+x_2X_2+\cdots+x_nX_n=0 \label{eq:CHSNondegenerate} 
\end{align} 
almost surely. Suppose \eqref{eq:CHSNondegenerate} has a nontrivial solution $\vec{x}$ with nonzero entries $x_{i_1}, x_{i_2}, \ldots, x_{i_k}$. If $k=1$, then $X_{i_k}=0$ almost surely, which contradicts our random variables being exponentially distributed. If $k>1$, then \eqref{eq:CHSNondegenerate} implies
\begin{align}
X_{i_1}=a_{i_2}X_{i_2}+a_{i_3}X_{i_3}+\cdots + a_{i_k}X_{i_K}, \label{eq:CHSNondegenerate2} 
\end{align} almost surely, in which $a_{i_j}=-x_{i_j}/x_{i_1}$. 
Equation \eqref{eq:CHSNondegenerate2} contradicts the assumption of independence, so \eqref{eq:CHSNondegenerate} has no nontrivial solution. Positive definiteness of $h_d$ follows.

\vspace{-5pt}
\subsection{Spline-theoretic proof}\label{Subsection:CHSExtension}
\vspace{-4pt}
The following spline-theoretic approach to the positive definiteness of the CHS polynomials is from \cite{BGON},
although the basic ideas were already present in \cite{GOOY}.
Let $a_1 < a_2 < \cdots < a_n$ be real numbers, let 
\vspace{-2pt}
\begin{equation*}
f[a_1,a_2, \ldots,a_n]=\sum_{j=1}^n \frac{f(a_j)}{\prod_{k\neq j}(a_j-a_k)},\vspace{-2pt}
\end{equation*}
denote the $n$th divided difference of $f$, and let 
\vspace{-1pt}
\begin{equation}\label{eq:PeanoSecond}
F(x;a_1,a_2,\ldots,a_n) = \frac{n-1}{2} \sum_{j=1}^n \frac{|a_j-x|(a_j-x)^{n-3}}{\prod_{k \neq j}(a_j-a_k)}.\vspace{-1pt} 
\end{equation}
denote the corresponding Curry--Schoenberg B-spline, which is a continuous probability density with support $[a_1,a_n]$ \cite{CS}.
A result of Peano ensures that if $f: [a_1,a_n] \to \R$ is $(n-1)$-times continuously differentiable, then
\vspace{-1pt}
\begin{equation}\label{eq:PeanoFirst}
f[a_1, \ldots,a_n]=\frac{1}{(n-1)!}\int_{a_1}^{a_n} f^{(n-1)}(x) F(x;a_1, \ldots,a_n)\,dx, \vspace{-1pt}
\end{equation}
in which $f^{(n-1)}$ denotes the $(n-1)$st derivative of $f$; see \cite{CS} or \cite[Chap. III, Sec. 3.7]{Davis}. 
Consequently, if $f^{(n-1)}$ is nonnegative and not identically zero
on $\R$, then the continuity of $F$ and \eqref{eq:PeanoFirst} imply that $f[a_1,\ldots,a_n] >0$.

Let $p$ be a nonnegative integer and let $f(x)=x^{p+n-1}$ in \eqref{eq:PeanoFirst}.  Then
\vspace{-1pt}
\begin{equation}\label{eq:PeanoThird}
h_p(a_1,a_2,\ldots,a_n)=\binom{p+n-1}{n-1}\int_{\R} x^{p} F(x;a_1, \ldots,a_n)\,dx; \vspace{-1pt}
\end{equation}
see \cite[Thm.~1.2.1]{Phil}. For $p=2,4,6,\ldots$, it follows that $h_p(a_1,a_2,\ldots,a_n) >0$.

\begin{remark}
The moment formula \eqref{eq:PeanoThird} suggests a possible definition of CHS polynomials of nonintegral degree.
This matter, and related Hunter-type lower bounds, are examined in \cite{BGON}. 
\end{remark}


\vspace{-2pt}
\section{Classification and complexifications of norms}
\label{Section:Classification}
\vspace{-2pt}
A norm $\|\cdot\|$ on the space $\M_n$ of $n\times n$ complex matrices is \emph{unitarily invariant} if 
\vspace{-3pt}
\begin{equation*}
\norm{UZV}=\norm{Z} \vspace{-1pt}
\end{equation*}
for all $Z\in \M_n$ and unitary $U,V \in \M_n$. A norm on $\R^n$ which is invariant under entrywise sign changes and permutations is  a \emph{symmetric gauge function}. A theorem of von Neumann asserts that any unitarily invariant norm on $\M_n$ is a symmetric gauge function applied to the singular values \cite[Thm.~7.4.7.2]{HJ}. 
For example, the Schatten norms are unitarily invariant and defined for  $d\geq 1$ by
\vspace{-1pt}
\begin{equation*}
    \norm{A}_{S_d}=\big( |\sigma_1|^d+|\sigma_2|^d+\cdots+ |\sigma_n|^d\big)^{\smash{1/d}}, \vspace{-1pt}
\end{equation*} 
in which $\sigma_1 \geq \sigma_2 \geq  \cdots \geq \sigma_n \geq 0$ are the singular values of $A\in \M_n$. 
\smallskip

A norm $\norm{\cdot}$ on the space $\H_n$ of $n\times n$ complex Hermitian matrices is \emph{weakly unitarily invariant} if 
$\norm{U^*AU}=\norm{A}$ for all $A\in \H_n$ and unitary $U \in \M_n$.
For example, the numerical radius is weakly unitarily invariant on $\H_n$ \cite{LiUnitary}.  Lewis proved that any weakly unitarily invariant norm on $\H_n$ is a symmetric norm on $\R^n$ evaluated on the eigenvalues of a Hermitian matrix \cite[Sec.~8]{LewisGroup}. A proof of this result is given in Subsection \ref{Subsection:Classify}. In Subsection \ref{Section:Complexification}, a method is developed for extending norms on $\H_n$ to the whole space $\M_n$.


\vspace{-6pt}
\subsection{Lewis' classification theorem}\label{Subsection:Classify}
\vspace{-5pt}

In this section we give a simple proof of Lewis' classification of weakly unitarily invariant norms on $\H_n$. The proof of Theorem \ref{Theorem:Classify} below avoids Lewis' theory of group invariance in convex matrix analysis \cite{LewisGroup}, the framework that underpinned \cite{Aguilar,Ours}.  The approach we take here comes from \cite{CGH}, which uses more standard techniques, such as Birkhoff's theorem on doubly stochastic matrices \cite{Birkhoff}.

\begin{theorem}[Lewis]\label{Theorem:Classify}
    A norm $\norm{\cdot}$ on $\H_n$ is  weakly unitarily invariant if and only if
    there is a symmetric norm $f:\R^n\to\R$ such that $\norm{A}=f( \lambda_1, \lambda_2, \ldots, \lambda_n)$.
\end{theorem}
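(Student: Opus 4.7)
The plan is to prove the equivalence in two directions, the harder being the ``symmetric norm induces a weakly unitarily invariant norm on $\H_n$'' direction, where establishing the triangle inequality requires the majorization machinery from Section \ref{Section:Birkhoff}.

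For the $(\Leftarrow)$ direction, assume $f$ is a symmetric (i.e., permutation-invariant) norm on $\R^n$ and set $\norm{A} := f(\LL(A))$ for $A \in \H_n$. Three of the four properties follow essentially for free: weak unitary invariance is immediate because $\LL(U^*AU) = \LL(A)$; positive definiteness follows from the spectral theorem, since $\LL(A) = \vec{0}$ forces $A = 0$; and absolute homogeneity holds because $f$ is absolutely homogeneous and $\LL(cA)$ agrees with $c\LL(A)$ up to a permutation (which $f$ ignores by symmetry). The triangle inequality is the work. I would invoke Lidskii's majorization $\LL(A+B) \prec \LL(A) + \LL(B)$, apply Theorem \ref{Lemma:MajorStoch} (Hardy-Littlewood-P\'olya) to produce a doubly stochastic $D$ with $\LL(A+B) = D(\LL(A) + \LL(B))$, apply Theorem \ref{Theorem:Birkhoff} (Birkhoff) to decompose $D = \sum_i c_i P_i$ as a convex combination of permutation matrices, and combine the convexity of the norm $f$ with its permutation invariance to obtain
\begin{equation*}
f(\LL(A+B)) \leq \sum_i c_i f\bigl(P_i(\LL(A)+\LL(B))\bigr) = f(\LL(A)+\LL(B)) \leq f(\LL(A)) + f(\LL(B)).
\end{equation*}

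For the $(\Rightarrow)$ direction, given a weakly unitarily invariant norm $\norm{\cdot}$ on $\H_n$, I would define $f(\vec{x}) := \norm{\diag(\vec{x})}$ on $\R^n$. The norm axioms for $f$ are inherited directly from those of $\norm{\cdot}$ via the linearity of $\diag$. For permutation symmetry, I would use the fact that any permutation matrix $P$ is unitary and satisfies $P^*\diag(\vec{x})P = \diag(P^{\intercal}\vec{x})$, so weak unitary invariance of $\norm{\cdot}$ transfers immediately to permutation invariance of $f$. Finally, for the identification $\norm{A} = f(\LL(A))$, the spectral theorem provides a unitary $U$ with $A = U\diag(\LL(A))U^*$, and one application of weak unitary invariance completes the argument.

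The principal obstacle is the triangle inequality in the $(\Leftarrow)$ direction, and within that step, the real depth sits in Lidskii's majorization theorem, which is classical but nontrivial; once this ingredient is accepted, the tools already developed in Section \ref{Section:Birkhoff} combine with the convexity and permutation invariance of $f$ to deliver the conclusion cleanly. The rest of the proof requires nothing more than the spectral theorem and the linearity of $\diag$.
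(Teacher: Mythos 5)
Your proposal is correct and follows essentially the same route as the paper: the forward direction defines $f$ on diagonal matrices and uses the spectral theorem, while the reverse direction establishes the triangle inequality via the majorization $\LL(A+B)\prec\LL(A)+\LL(B)$, Hardy--Littlewood--P\'olya, Birkhoff, and the symmetry and convexity of $f$. The only cosmetic difference is that the paper derives the majorization from the Ky Fan eigenvalue inequality plus the trace identity rather than citing Lidskii, but this is the same ingredient.
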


The proof of Theorem \ref{Theorem:Classify} given here first appeared in \cite{CGH}. It follows immediately from Proposition \ref{Proposition:OnlyIf} \cite[Prop.~5]{CGH} and Proposition \ref{Proposition:RntoHermitian} \cite[Prop.~9]{CGH} below.

\begin{proposition}\label{Proposition:OnlyIf}
If $\norm{\cdot}$ is a weakly unitarily invariant norm on $\H_n$, then there is a symmetric 
norm $f$  on $\R^n$ such that $\norm{A}=f(  \boldsymbol{\lambda}(A))$ for all $A\in \H_n$.
\end{proposition}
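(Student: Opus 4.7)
The plan is to construct the symmetric norm $f$ as the restriction of $\norm{\cdot}$ to the diagonal Hermitian matrices. Specifically, I would define $f \colon \R^n \to \R$ by
$$f(\vec{x}) = \norm{\diag(\vec{x})},$$
and then check three things: that $f$ is a norm on $\R^n$, that $f$ is symmetric, and that $\norm{A} = f(\LL(A))$ for every $A \in \H_n$.

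That $f$ is a norm is automatic, since $\vec{x} \mapsto \diag(\vec{x})$ is a linear injection $\R^n \hookrightarrow \H_n$. Positive definiteness, absolute homogeneity, and the triangle inequality therefore descend directly from the corresponding properties of $\norm{\cdot}$ on $\H_n$.

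For the symmetry of $f$, I fix a permutation $\sigma$ of $\{1,2,\ldots,n\}$ and let $P_\sigma \in \M_n$ denote the associated permutation matrix, which is unitary. A direct computation on standard basis vectors yields
$$P_\sigma \diag(\vec{x}) P_\sigma^{*} = \diag(\sigma \cdot \vec{x}),$$
where $\sigma \cdot \vec{x}$ denotes the permuted vector. Weak unitary invariance of $\norm{\cdot}$ then gives
$$f(\sigma \cdot \vec{x}) = \norm{P_\sigma \diag(\vec{x}) P_\sigma^{*}} = \norm{\diag(\vec{x})} = f(\vec{x}),$$
so $f$ is invariant under arbitrary rearrangements of its arguments. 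The desired identity $\norm{A} = f(\LL(A))$ now follows from the spectral theorem: every $A \in \H_n$ admits a unitary diagonalization $A = U \diag(\LL(A)) U^{*}$, and weak unitary invariance yields $\norm{A} = \norm{\diag(\LL(A))} = f(\LL(A))$. The symmetry of $f$ ensures this value is independent of the particular ordering of eigenvalues produced by the diagonalization.

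No step of this argument presents a genuine obstacle: the whole proof amounts to choosing the correct formula for $f$ and then invoking weak unitary invariance in the two privileged cases of permutation matrices (to recover symmetry) and spectral diagonalizers (to recover $A$ from its eigenvalue list). The only computational verification worth writing out is the conjugation identity $P_\sigma \diag(\vec{x}) P_\sigma^{*} = \diag(\sigma \cdot \vec{x})$, which is routine.
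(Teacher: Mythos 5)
Your proof is correct and follows essentially the same route as the paper: both define $f$ via the restriction of $\norm{\cdot}$ to diagonal matrices, obtain symmetry from conjugation by permutation unitaries, and use the spectral theorem with weak unitary invariance to identify $\norm{A}$ with $f(\LL(A))$. Your write-up merely reorders these steps (defining $f$ explicitly first), which is a presentational difference, not a mathematical one.
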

        
\begin{proof}
    Weak unitary invariance implies that $\norm{A}$ is a symmetric function in the eigenvalues of $A$. 
    In particular, $\norm{A}=f(  \boldsymbol{\lambda}(A) )$ for some symmetric function $f$.  The diagonal matrix $\diag(\vec{a})$ is Hermitian for any  $\vec{a}=( a_1, a_2,\dots, a_n)\in \R^n$ and  $\bvec{\lambda}(\diag(\vec{a})) = P\vec{a}$ for some permutation matrix $P$. Symmetry of $f$ implies
    \begin{equation*}
        f(\vec{a}) =f(P\vec{a}) = f\big(\bvec{\lambda}(\diag{\vec{a}})\big) = \norm{\diag{\vec{a}}}.
    \end{equation*} 
    Consequently, $f$ must inherit the properties of a norm on $\R^n$.
\end{proof}

To establish Proposition \ref{Proposition:RntoHermitian}, the following lemma is first needed.

\begin{lemma}\label{Lemma:EigenPerms}
If $A, B\in \H_n$, then there exist permutation matrices $P_1,P_2,\ldots,P_{n^2}$ and 
nonnegative constants $c_1,c_2,\ldots,c_{n^2}$  satisfying $\sum_{i = 1}^{n^2} c_i = 1$ for which
\vspace{-5pt}
\begin{equation*}
\bvec{\lambda}(A+B) = \sum_{i = 1}^{\,n^2} c_{i} P_i(\bvec{\lambda}(A) + \bvec{\lambda}(B)).\vspace{-2pt}
\end{equation*}
\end{lemma}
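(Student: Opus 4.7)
The plan is to reduce the statement to the majorization inequality $\bvec{\lambda}(A+B) \prec \bvec{\lambda}(A) + \bvec{\lambda}(B)$ and then apply the machinery developed in Section \ref{Section:Birkhoff}. Once we establish this majorization, Theorem \ref{Lemma:MajorStoch} (Hardy--Littlewood--P\'olya) delivers a doubly stochastic matrix $D$ with $\bvec{\lambda}(A+B) = D(\bvec{\lambda}(A)+\bvec{\lambda}(B))$, and then Theorem \ref{Theorem:Birkhoff} (Birkhoff) writes $D$ as a convex combination of at most $n^2$ permutation matrices, giving the desired expression verbatim.

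To prove the majorization, I would use a variational (Ky Fan-style) characterization of partial sums of eigenvalues. Specifically, for any $A \in \H_n$ and $1 \leq k \leq n$,
\begin{equation*}
\sum_{i=1}^{k} \lambda_i(A) \,=\, \max_{\substack{U \in \M_{n\times k} \\ U^*U = I_k}} \tr(U^*AU),
\end{equation*}
which follows from the spectral theorem by writing $A = Q \diag(\bvec{\lambda}(A)) Q^*$ with $Q$ unitary and observing that the feasible set for $V := Q^*U$ has the same form; the maximum is attained by picking the first $k$ columns of $Q$. Applying this to $A+B$ and using linearity of trace gives, for every $U$ with $U^*U = I_k$,
\begin{equation*}
\tr(U^*(A+B)U) \,=\, \tr(U^*AU) + \tr(U^*BU) \,\leq\, \sum_{i=1}^{k} \lambda_i(A) + \sum_{i=1}^{k} \lambda_i(B).
\end{equation*}
Taking the supremum over $U$ yields $\sum_{i=1}^{k}\lambda_i(A+B) \leq \sum_{i=1}^{k}(\lambda_i(A)+\lambda_i(B))$. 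For $k=n$ equality holds by $\tr(A+B) = \tr A + \tr B$. Note that the right-hand side $\bvec{\lambda}(A)+\bvec{\lambda}(B)$ is already in nonincreasing order since both summands are, so its partial sums coincide with the partial sums of its nonincreasing rearrangement. Hence $\bvec{\lambda}(A+B) \prec \bvec{\lambda}(A)+\bvec{\lambda}(B)$ by the definition of majorization.

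The main obstacle is the variational identity for $\sum_{i=1}^k \lambda_i(A)$; depending on the paper's previous usage conventions, one may simply cite it as Ky Fan's maximum principle, or sketch the half-page spectral-theorem argument above. Once that identity is in hand, the remainder is a clean two-line invocation of Theorems \ref{Lemma:MajorStoch} and \ref{Theorem:Birkhoff}. I would conclude the proof by writing
\begin{equation*}
\bvec{\lambda}(A+B) \,=\, D\bigl(\bvec{\lambda}(A)+\bvec{\lambda}(B)\bigr) \,=\, \sum_{i=1}^{n^2} c_i\, P_i\bigl(\bvec{\lambda}(A)+\bvec{\lambda}(B)\bigr),
\end{equation*}
with $c_i \geq 0$ and $\sum c_i = 1$, matching the statement exactly.
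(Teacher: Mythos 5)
Your proposal is correct and follows essentially the same route as the paper: establish $\bvec{\lambda}(A+B) \prec \bvec{\lambda}(A)+\bvec{\lambda}(B)$ via the Ky Fan partial-sum inequality (with equality at $k=n$ from the trace), then apply Theorem \ref{Lemma:MajorStoch} and Theorem \ref{Theorem:Birkhoff}. The only difference is cosmetic: the paper simply cites Ky Fan's inequality, whereas you sketch its proof via the variational (maximum-principle) characterization of partial eigenvalue sums.
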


\begin{proof}
The Ky Fan eigenvalue inequality \cite{KyFan} ensures that
\vspace{-5pt}
\begin{equation}\label{eq:KyFan}
    \sum_{i=1}^{k} \lambda_i(A+B) \leq \sum_{i=1}^{k} \big(\lambda_i(A) + \lambda_i(B)\big)\vspace{-5pt}
\end{equation} 
for all $1\leq k\leq n$. The sum of the eigenvalues of a matrix is its trace. Thus,
\vspace{-4pt}
\begin{equation*} 
\sum_{i=1}^{n} \lambda_i(A+B) = \tr (A+B) = \tr  A + \tr  B = \sum_{i=1}^{n} \big(\lambda_i(A) + \lambda_i(B)\big),\vspace{-4pt}
\end{equation*} 
so equality holds in \eqref{eq:KyFan} for $k=n$. Thus, $\bvec{\lambda}(A+B) \!\prec \bvec{\lambda}(A) + \bvec{\lambda}(B)$ and
Theorem~\ref{Lemma:MajorStoch} provides a doubly stochastic matrix $D$ such that
$\bvec{\lambda}(A+B) = D(\bvec{\lambda}(A) + \bvec{\lambda}(B))$. Finally,
Theorem \ref{Theorem:Birkhoff} provides the desired permutation matrices and coefficients.
\end{proof}

\begin{proposition}\label{Proposition:RntoHermitian}
    If $f$ is a symmetric norm on $\R^n$, then 
    $\norm{A}=f(\boldsymbol{\lambda}(A))$ defines a weakly unitarily invariant norm on $\H_n$.
\end{proposition}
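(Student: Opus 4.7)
The plan is to verify the norm axioms one at a time, using the symmetry of $f$ to handle definitional ambiguities and Lemma~\ref{Lemma:EigenPerms} as the key tool for the triangle inequality. Weak unitary invariance itself is essentially free: $U^*AU$ has the same eigenvalues as $A$ (counted with multiplicity), so $\|U^*AU\| = f(\boldsymbol{\lambda}(U^*AU)) = f(\boldsymbol{\lambda}(A)) = \|A\|$. Similarly, positive definiteness follows from the fact that a Hermitian matrix is zero if and only if all of its eigenvalues vanish, combined with positive definiteness of the norm $f$ on $\R^n$.

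For absolute homogeneity, the eigenvalues of $cA$ are $c\lambda_1(A), c\lambda_2(A), \ldots, c\lambda_n(A)$, though not in nonincreasing order when $c<0$. Since $f$ is symmetric, one may reorder the coordinates without affecting the value, so
\begin{equation*}
\|cA\| = f(c\lambda_1(A),\ldots,c\lambda_n(A)) = |c|\, f(\lambda_1(A),\ldots,\lambda_n(A)) = |c|\,\|A\|,
\end{equation*}
where the middle equality uses absolute homogeneity of $f$ together with its invariance under sign changes (which a symmetric norm on $\R^n$ inherits automatically from the norm axioms once combined with permutation invariance --- or, alternatively, one observes that $\boldsymbol{\lambda}(-A)$ is a permutation of $-\boldsymbol{\lambda}(A)$).

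The substantive step is the triangle inequality, and this is where Lemma~\ref{Lemma:EigenPerms} does the heavy lifting. That lemma writes $\boldsymbol{\lambda}(A+B) = \sum_{i=1}^{n^2} c_i P_i(\boldsymbol{\lambda}(A)+\boldsymbol{\lambda}(B))$ as a convex combination of permutations of the vector $\boldsymbol{\lambda}(A)+\boldsymbol{\lambda}(B)$. Applying $f$, then invoking its triangle inequality, absolute homogeneity, and symmetry in turn yields
\begin{align*}
\|A+B\| &= f\!\left(\sum_{i=1}^{n^2} c_i P_i\bigl(\boldsymbol{\lambda}(A)+\boldsymbol{\lambda}(B)\bigr)\right) \\
&\leq \sum_{i=1}^{n^2} c_i\, f\bigl(P_i(\boldsymbol{\lambda}(A)+\boldsymbol{\lambda}(B))\bigr) \\
&= \sum_{i=1}^{n^2} c_i\, f\bigl(\boldsymbol{\lambda}(A)+\boldsymbol{\lambda}(B)\bigr) \\
&= f\bigl(\boldsymbol{\lambda}(A)+\boldsymbol{\lambda}(B)\bigr) \leq f(\boldsymbol{\lambda}(A)) + f(\boldsymbol{\lambda}(B)) = \|A\|+\|B\|,
\end{align*}
using $\sum c_i = 1$ at the penultimate step.

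The main (and indeed only) obstacle is the triangle inequality, and even that is packaged entirely into Lemma~\ref{Lemma:EigenPerms}: the genuine difficulty lies upstream, in the combination of the Ky Fan inequality, the Hardy--Littlewood--P\'olya theorem, and Birkhoff's theorem used to prove that lemma. Given that machinery, the proof of Proposition~\ref{Proposition:RntoHermitian} reduces to the computation above, and no further ideas are required.
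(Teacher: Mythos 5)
Your proof is correct and follows essentially the same route as the paper's: positive definiteness from $\bvec{\lambda}(A)=0 \iff A=0$, homogeneity from symmetry plus absolute homogeneity of $f$ (which already gives $f(c\vec{x})=|c|f(\vec{x})$ for negative $c$, so no separate sign-change invariance is needed), and the triangle inequality via Lemma~\ref{Lemma:EigenPerms} exactly as in the paper. No differences worth noting.
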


\begin{proof}
    Since $\norm{A}=f(\boldsymbol{\lambda}(A))$ is a symmetric function of the eigenvalues of $A$, it is
    weakly unitarily invariant. We must show that $\norm{\cdot}$ defines a norm on $\H_n$. 
    
    \medskip\noindent\textsc{Positive Definiteness}. First note that a Hermitian matrix $A$ is zero
    if and only if $\bvec{\lambda}(A) = 0$. Therefore, 
    the positive definiteness of $f$ on $\R^n$ implies the positive definiteness of $\norm{\cdot}$. 
    
    \medskip\noindent\textsc{Absolute Homogeneity}. If $c\geq 0$, then $\bvec{\lambda}(cA) = c\bvec{\lambda}(A)$. If $c<0$, then
    \vspace{-2pt}
    \begin{equation*}
        \bvec{\lambda}(cA) = c
        \begin{bmatrix}
        && 1  \\[-8pt]
        & \iddots & \\[-3pt]
        1 & &
        \end{bmatrix}
        \bvec{\lambda}(A).\vspace{-2pt}
    \end{equation*} 
    The homogeneity and symmetry of $f$ therefore imply that 
    \vspace{-2pt}
    \begin{equation*}
        \norm{cA} = f\big(\bvec{\lambda}(cA)\big) = f\big(c\bvec{\lambda}(A)\big) = |c| f\big(\bvec{\lambda}(A)\big) = |c|\norm{A}.\vspace{-2pt}
    \end{equation*} 
    
    \medskip\noindent\textsc{Triangle Inequality}. 
    Suppose  that $A,B \in \H_n$. Lemma \ref{Lemma:EigenPerms} ensures that
    \vspace{-3pt}
    \begin{equation*}
        \norm{A + B} = f\big(\bvec{\lambda}(A+B)\big) = f\bigg(\sum_{i = 1}^{\vphantom{n}\smash{\,n^2}} c_{i} P_i\big(\bvec{\lambda}(A) + \bvec{\lambda}(B)\big)\!\bigg). \vspace{-3pt}
    \end{equation*} 
    The triangle inequality and homogeneity of $f$ on $\R^n$ therefore imply that
    \vspace{-6pt}
    \begin{equation}\label{eq:Triangle}
        \norm{A+B}\leq \sum_{i=1}^{n^2} c_{i} f\big( P_i(\bvec{\lambda}(A) + \bvec{\lambda}(B))\big).\vspace{-8pt}
    \end{equation} 
 The norm $f$ is invariant under permutations  and $\sum_{i = 1}^{n^2} c_i = 1$. Therefore,
 \vspace{-5pt}
    \begin{equation*}
        \sum_{i=1}^{n^2} c_{i} f\big( P_i(\bvec{\lambda}(A) + \bvec{\lambda}(B))\big)
        = \sum_{i=1}^{n^2} c_{i} f\big(\bvec{\lambda}(A) + \bvec{\lambda}(B)\big)= f\big(\bvec{\lambda}(A) + \bvec{\lambda}(B)\big).\vspace{-4pt}
    \end{equation*} 
    Consequently, the triangle inequality for $f$ on $\R^n$ and \eqref{eq:Triangle} imply that
    \begin{equation*}
        \norm{A+B}
        \leq  f\big(\bvec{\lambda}(A) + \bvec{\lambda}(B)\big)
        \leq  f\big(\bvec{\lambda}(A) \big)+ f\big(\bvec{\lambda}(B)\big)
        =\norm{A}+\norm{B}. \qedhere
    \end{equation*}
\end{proof}

\begin{remark}
Theorem \ref{Theorem:Classify} concerns weakly unitarily invariant norms on $\H_n$.
	A norm $\norm{\cdot}$ on $\M_n$, the space of all complex $n \times n$ matrices, is \emph{weakly unitarily invariant} if $\norm{A}=\norm{U^*AU}$ for all $A\in \M_n$ and unitary $U \in \M_n$.  A norm $\Phi$ on the space $C(S)$ of continuous functions on the unit sphere $S\subset \C^n$ is a \emph{unitarily invariant function norm} if  $\Phi(f\circ U)=\Phi(f)$ for all $f\in C(S)$ and unitary $U \in \M_n$. 
	It can be shown that every weakly unitarily invariant norm $\norm{\cdot}$ on $\M_n$ is of the form
	$\norm{A}=\Phi(f_A)$, where $f_A\in C(S)$ is defined by $f_A(\vec{x})=\langle A\vec{x},\vec{x}\rangle$ and $\Phi$ is a unitarily invariant function norm \cite[Thm.~2.1]{Bhatia}, \cite{BhatiaHolbrook}.
\end{remark}


\vspace{-6pt}
\subsection{Complexification of norms}\label{Section:Complexification}
\vspace{-4pt}

In this section we develop machinery for extending norms on $\H_n$ to $\M_n$. Let $\V$ be a complex vector space with a conjugate-linear involution $v\mapsto v^*$. Suppose that the $\R$-linear subspace 
\begin{equation*}
\V_\R=\{v\in \V:v=v^*\}
\end{equation*}
of $*$-fixed points is endowed with a norm $\| \cdot\|$. For each $v \in \V$ and $t \in \R$, we have $e^{it}v+e^{-it}v^* \in \V_{\R}$. 
Lastly, we note that the path $t\mapsto \|e^{it}v+e^{-it}v^*\|$ is continuous for each $v\in \V$. The following is \cite[Prop.~15]{Aguilar}.

\begin{proposition}\label{p:gennorm}
Suppose $d\geq 1$. The following is a norm on $\V$ that extends $\|\cdot\|$:
\vspace{-4pt}
\begin{equation}\label{eq:gennorm}
\cnorm{v}_d= \bigg( \frac{1}{2\pi \binom{d}{d/2}}\int_0^{2\pi}\|e^{it}v+e^{-it}v^*\|^d\,\mathrm{d}t \bigg)^{\!\!1/d}.
\end{equation}
\end{proposition}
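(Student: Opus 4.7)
The plan is to verify the four defining properties of a norm in turn (extension to $\|\cdot\|$, positive definiteness, absolute homogeneity, triangle inequality), exploiting the fact that $t\mapsto e^{it}v+e^{-it}v^{*}$ is a path of $*$-fixed points so that $\|\cdot\|$ applies to it pointwise. Throughout, the continuity of $\|\cdot\|$ on $\V_{\R}$ together with the continuity of the path in $t$ makes the integrand a continuous, nonnegative function of $t$, so all the integrals are well defined and a vanishing integral forces the integrand to vanish identically.

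For the extension claim, suppose $v=v^{*}$. Then $e^{it}v+e^{-it}v^{*}=2(\cos t)v$, so
\begin{equation*}
\cnorm{v}_{d}^{d}=\frac{\|v\|^{d}}{2\pi\binom{d}{d/2}}\int_{0}^{2\pi}(2|\cos t|)^{d}\,\mathrm{d}t.
\end{equation*}
I would verify that the remaining integral equals $\binom{d}{d/2}$; for even integer $d$ this is immediate from expanding $(e^{it}+e^{-it})^{d}$ by the binomial theorem and keeping only the constant Fourier term, while for general real $d\geq 1$ it follows from the standard evaluation $\int_{0}^{2\pi}|\cos t|^{d}\,\mathrm{d}t=2\sqrt{\pi}\,\Gamma(\tfrac{d+1}{2})/\Gamma(\tfrac{d}{2}+1)$ and the Legendre duplication formula applied to $\binom{d}{d/2}=\Gamma(d+1)/\Gamma(\tfrac{d}{2}+1)^{2}$. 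For positive definiteness, note $\cnorm{v}_{d}\geq 0$ trivially; if $\cnorm{v}_{d}=0$ then continuity forces $e^{it}v+e^{-it}v^{*}=0$ in $\V$ for every $t$, and evaluating at $t=0$ and $t=\pi/2$ gives $v+v^{*}=0$ and $v-v^{*}=0$ respectively, whence $v=0$.

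For absolute homogeneity, write $c=re^{i\theta}$ with $r\geq 0$; since $(cv)^{*}=\overline{c}v^{*}$, one has
\begin{equation*}
e^{it}(cv)+e^{-it}(cv)^{*}=r\bigl(e^{i(t+\theta)}v+e^{-i(t+\theta)}v^{*}\bigr),
\end{equation*}
so the substitution $s=t+\theta$ together with $2\pi$-periodicity of the integrand yields $\cnorm{cv}_{d}=|c|\cnorm{v}_{d}$. The final and substantive step is the triangle inequality, which I would derive in two layers: first, pointwise in $t$, since $e^{it}(v+w)+e^{-it}(v+w)^{*}\in\V_{\R}$ is the sum of the corresponding expressions for $v$ and $w$, the triangle inequality for $\|\cdot\|$ on $\V_{\R}$ gives
\begin{equation*}
\|e^{it}(v+w)+e^{-it}(v+w)^{*}\|\leq \|e^{it}v+e^{-it}v^{*}\|+\|e^{it}w+e^{-it}w^{*}\|;
\end{equation*}
then I would invoke Minkowski's inequality in $L^{d}([0,2\pi])$ (valid precisely because $d\geq 1$) to push this through the $d$-th root of the normalized integral and obtain $\cnorm{v+w}_{d}\leq\cnorm{v}_{d}+\cnorm{w}_{d}$.

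The only nontrivial obstacle is pinning down the normalization constant $\binom{d}{d/2}$ for non-integer $d$; everything else is a clean combination of pointwise inequalities in $\V_{\R}$ and the elementary inequalities of integration theory (Minkowski, change of variables, continuity). The design of the constant $1/(2\pi\binom{d}{d/2})$ is evidently engineered precisely so that the extension property holds, and once that is confirmed the rest of the proof is essentially forced by the structure of the integral.
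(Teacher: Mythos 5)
Your proof is correct and follows essentially the same route as the paper: the same normalization integral $\int_0^{2\pi}|2\cos t|^d\,\mathrm{d}t=2\pi\binom{d}{d/2}$ (with the binomial coefficient read via the gamma function) for the extension property, homogeneity via conjugate-linearity of $*$ and $2\pi$-periodicity, and the two-layer triangle inequality (pointwise in $\V_\R$, then Minkowski in $L^d[0,2\pi]$). The only cosmetic difference is positive definiteness, where you force the continuous integrand to vanish identically and evaluate at $t=0,\pi/2$, whereas the paper writes $v=u+iu'$ with $u,u'\in\V_\R$ and notes that $t\mapsto\|2\cos(t)u+2\sin(t)u'\|$ is positive almost everywhere; the two arguments are interchangeable.
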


\begin{proof}
If $v\in \V_\R$, then $\|e^{it}v+e^{-it}v^*\|=|2\cos t|\|v\|$ and $\cnorm{v}_d=\|v\|$ since
\vspace{-2pt}
\begin{equation*}
\int_0^{2\pi} |2\cos t|^d\,\mathrm{d}t = 2\pi \binom{d}{d/2},\vspace{-7pt}
\end{equation*}
in which $\binom{d}{d/2}$ is interpreted in terms of the gamma function. We now verify that $\cnorm{\cdot}_d$ is a norm on $\V$.

\medskip\noindent\textsc{Positive definiteness}. 
The nonnegativity of $\|\cdot\|$ on $\V_{\R}$ and \eqref{eq:gennorm} ensure that
$\cnorm{\cdot}_d$ is nonnegative on $\V$.
If $v\in \V\backslash\{0\}$, then $v = u + iu'$, in which $u = \frac{1}{2}(v + v^*)$
and $u' = \frac{1}{2}(-iv + iv^*)$ belong to $\V_{\R}$.  Not both of $u$ and $u'$ can be zero, so 
\vspace{-4pt}
\begin{equation*}
t\mapsto \|e^{it}v+e^{-it}v^*\| = \| 2 \cos(t)u + 2 \sin(t) u'\|\vspace{-2pt}
\end{equation*}
is continuous and positive almost everywhere.
Therefore, $\cnorm{v}_d\neq0$.

\medskip\noindent\textsc{Absolute homogeneity}. 
For $r>0$ and $\theta\in\R$, the $\R$-homogeneity of $\|\cdot\|$ and the $2\pi$-periodicity of the integrand in \eqref{eq:gennorm} yield $\cnorm{( re^{i\theta})v }_d = r \cnorm{e^{i\theta}v}  =r\cnorm{v}_d$.

\medskip\noindent\textsc{Triangle inequality}. For $u,v\in \V$, we have
\begin{align*}
&\bigg(\!\int_0^{2\pi}\|e^{it}(u+v)+e^{-it}(u+v)^*\|^d\,\mathrm{d}t\bigg)^{\!\!\smash{1/d}} \\[-2pt]
&\qquad \leq 
\bigg(\!\int_0^{2\pi}\big(\|e^{it}u+e^{-it}u^*\|+\|e^{it}v+e^{-it}v^*\|\big)^d\,\mathrm{d}t\bigg)^{\!\!\smash{1/d}} \\
&\qquad \leq \bigg(\!\int_0^{2\pi}\|e^{it}u+e^{-it}u^*\|^d\,\mathrm{d}t\bigg)^{\!\!\smash{1/d}}
+\bigg(\!\int_0^{2\pi}\|e^{it}v+e^{-it}v^*\|^d\,\mathrm{d}t\bigg)^{\!\!\smash{1/d}}.\\[-22pt]
\end{align*}	
The first inequality follows from the monotonicity of power functions and the triangle inequality for $\|\cdot\|$.
The second inequality holds by the triangle inequality for the $L^d$ norm on the space $C[0,2\pi]$.
\end{proof}


\section{Properties of random vector norms}\label{Section:Main}

A \emph{random vector} in $\R^n$ is a tuple $\X=(X_1, X_2, \ldots, X_n)$, in which 
$X_1, X_2, \ldots, X_n$ are real-valued random variables on a common probability space $(\Omega,\F,\P)$; we assume $\Omega \subset \R$. If $X_1, X_2, \ldots, X_n$ are iid, then we call $\X$ an \emph{iid random vector}.
Let $\X$ be an iid random vector in $\R^n$ and define $f_{\X,d}:\R^n\to \R$ by
\begin{equation}\label{eq:norm}
	f_{\X,d}(\boldsymbol{\lambda}) = \big( \E |\langle\X, \bvec{\lambda}\rangle|^d\big)^{\!\!1/d}
\end{equation}  
for $d\geq 1$. Here, $\Gamma(\cdot)$ denotes the gamma function. Furthermore, if $A\in \H_n$ and $\boldsymbol{\lambda}=(\lambda_1,\lambda_2, \ldots, \lambda_n)$ denotes the vector of eigenvalues 
\begin{equation*}
\lambda_1 \geq \lambda_2 \geq \cdots \geq \lambda_n
\end{equation*}
of $A$, then the function
\begin{equation}\label{eq:NormHermitian}
	\norm{A}_{\X,d} = f_{\X,d}(\boldsymbol{\lambda}) =
	\big( \E |\langle\X, \bvec{\lambda}\rangle|^d \big)^{\!\!\smash{1/d}}
\end{equation} 
on $\H_n$ defines a \emph{random vector norm} on $\H_n$. That this is indeed a norm is proved in 
Theorem \ref{Theorem:Main2} below.

\begin{remark}\label{Rem:redef}
The sources \cite{CGH,Ours} define $\norm{A}_{\X,d}^d= \E |\langle\X, \bvec{\lambda}\rangle|^d / \Gamma(d+1)$. 
This normalization simplifies some of the formulas in Section \ref{Sec:CMS}, although we have come to the conclusion that \eqref{eq:NormHermitian} is the more natural definition. However, the new definition makes it natural to redefine the constant $y_{\bvec{\pi}}$; see Subsection \ref{Subsection:Partitions}.
\end{remark}

The results of Subsection \ref{Section:Complexification} permit us to extend $\norm{\cdot}_{\X,d}$ to $\M_n$ via 
\begin{equation}\label{eq:NormComplex}
	\cnorm{Z}_{\vec{X},d}
	=  \left(\frac{1}{2\pi \binom{d}{d/2}} \int_0^{2 \pi} \norm{ e^{it} Z + e^{-it}Z^*}_{\bvec{X},d}^d \,\mathrm{d}t \right)^{\!\!\smash{1/d}}.
\end{equation}
We refer to the above as a \emph{random vector norm} on $\M_n$. 
Assuming that \eqref{eq:NormHermitian} is a norm, Proposition \ref{p:gennorm} ensures that $\eqref{eq:NormComplex}$ is a norm on $\M_n$ which restricts to $\norm{\cdot}_{\X,d}$, as defined in \eqref{eq:NormHermitian},  on $\H_n$. For clarity, we write $\cnorm{\cdot}_{\X,d}$ when we refer to this extension.

The next theorem states that random vector norms are weakly unitarily invariant norms on $\H_n$ that extend to weakly unitarily invariant norms on $\M_n$, 
and provides probabilistic and combinatorial interpretations of these norms.
The following is \cite[Thm.~3]{CGH} and is proved in the Subsections \ref{subsec:Proof1}--\ref{Subsection:ProofE}.

\begin{theorem}\label{Theorem:Main2}
	Let $d\geq 1$, let $\X=(X_1, X_2, \ldots, X_n)$ be an iid random vector in which the entries $X_1, X_2, \ldots, X_n \in L^d(\Omega,\F,\P)$ are nondegenerate, and let $\boldsymbol{\lambda}=(\lambda_1,\lambda_2, \ldots, \lambda_n)$ be the vector of eigenvalues 
	$\lambda_1 \hspace{-.5pt}\geq\hspace{-.5pt} \lambda_2 \hspace{-.5pt}\geq\hspace{-.5pt} \cdots \hspace{-.5pt}\geq\hspace{-.5pt} \lambda_n$ of $A$. 
	\begin{enumerate}[label=(\alph*)]
		\itemsep1em
		\item $\norm{A}_{\X,d}= \big( \E |\langle \X, \LL\rangle|^d \big)^{\!1/d}$ is a weakly unitarily invariant norm on $\H_n$.
		
		\item If the first $m$ moments of $X_i$ exist, then the function $f:[1,m] \to \R$ defined by 
		$f(d) =\norm{A}_{\X,d}$ is continuous for all $A\in\H_n$.
		
		\item If $2 \leq p \leq q$, then  \vspace{-4pt}
		\begin{align*}
			\norm{A}_{\X,p} &\leq \norm{A}_{\X,q} && \text{for $A\in \H_n$, and} \\[3pt]
			\tbinom{p}{p/2}^{1/p} \cnorm{Z}_{\X,p} &\leq \tbinom{q}{q/2}^{1/q} \cnorm{Z}_{\X,q} && \text{for $Z\in\M_n$}.
		\end{align*} 
		The inequalities are reversed if $1\leq p\leq q \leq 2$.
		
		\item The function $\bvec{\lambda}(A) \mapsto \norm{A}_{\X,d}$ is Schur-convex.
		
		\item If  $d \geq 2$ is even and the $X_i$ admit a moment generating function $M(t)$, then
		\vspace{-2pt}
		\begin{equation}\label{eq:MainMGF}
			\norm{A}_{\X,d}^d = d! \cdot [t^d] M_{\Lambda}(t)
			\quad \text{for $A \in \H_n$},
		\end{equation}
		in which $M_{\Lambda}(t) = \prod_{i=1}^n M(\lambda_i t)$ is the moment generating
		function for the random variable 
		$\Lambda =\langle \X, \LL(A) \rangle$.
		In particular, $\norm{A}_{\X,d}$ is a positive definite, homogeneous,
		symmetric polynomial in the eigenvalues of $A$.
		
		\item If $d\geq 2$ is even and the first $d$ moments of $X_i$ exist, then
		\vspace{-2pt}
		\begin{align}
			\norm{A}_{\X,d}^d
			&= B_{d}(\kappa_1\tr A, \kappa_2\tr A^2, \ldots, \kappa_d\tr A^d) \label{eq:MainBell} \\
			&= \sum_{\bvec{\pi}\vdash d} y_{\bvec{\pi}} \kappa_{\bvec{\pi}} p_{\bvec{\pi}} (\bvec{\lambda})
			\quad \text{for $A \in \H_n$}, \label{eq:RealPermForm}
		\end{align}
		in which $B_d$ is given by \eqref{eq:ExpoBell}, $\kappa_i$ is defined in \eqref{eq:CumulantGen}, $\kappa_{\bvec{\pi}}$ and $y_{\bvec{\pi}}$ are given in \eqref{eq:DefY},
		$p_{\bvec{\pi}} (\bvec{\lambda})$ is defined in \eqref{eq:pTrace},
		and the second sum runs over all partitions $\bvec{\pi}$ of $d$. 
		
		\item If $d\geq 2$ is even, let $\bvec{\pi}=(\pi_1, \pi_2, \ldots,\pi_r)$ be a partition of $d$. 
		Let $\TT_{\bvec{\pi}}(Z)$ be the arithmetic mean of the $\binom{d}{d/2}$ terms obtained by placing $d/2$ adjoints ${}^*$ among the $d$ copies of $Z$ in
		$\vphantom{{\displaystyle1_2}}\smash{(\tr \underbrace{ZZ\cdots Z}_{\pi_1})
			(\tr \underbrace{ZZ\cdots Z}_{\pi_2})
			\cdots
			(\tr \underbrace{ZZ\cdots Z}_{\pi_r})}$. Then $\cnorm{Z}_{\X,d}$, defined in \eqref{eq:NormComplex}, satisfies
		\begin{equation}\label{eq:ExtendedGeneral}
			\cnorm{Z}_{\X,d}^d=  \sum_{\bvec{\pi} \,\vdash\, d}  y_{\bvec{\pi}} \kappa_{\bvec{\pi}} \TT_{\bvec{\pi}}(Z)
			\quad \text{for $Z \in \M_n$},
		\end{equation} 
		where the sum runs over all partitions $\bvec{\pi}$ of $d$. In particular, $\cnorm{Z}_{\X,d}$ is a weakly unitarily invariant norm on $\M_n$ that restricts to $\|Z\|_{\X,d}$ on $\H_n$ which is a positive definite trace polynomial in $Z$ and $Z^*$.
	\end{enumerate}
\end{theorem}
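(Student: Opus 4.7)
The plan is to view $f_{\X,d}(\LL)$ as the $L^d$-norm of the random variable $\langle \X, \LL \rangle$, regarded as a function of $\LL \in \R^n$. Absolute homogeneity, the triangle inequality, and nonnegativity follow at once from Minkowski's inequality in $L^d(\Omega,\F,\P)$, while symmetry in $\LL$ follows from the iid assumption on the $X_i$. Positive definiteness is the only subtle axiom: if $f_{\X,d}(\LL)=0$ then $\langle \X, \LL\rangle = 0$ almost surely, and the independence-plus-nondegeneracy argument already used in the probabilistic proof of Hunter's theorem in Subsection~\ref{Subsection:CHSExpectation} forces every $\lambda_i=0$. Weak unitary invariance on $\H_n$ is then immediate since $\|A\|_{\X,d}$ depends on $A$ only through $\LL(A)$, and also follows from Lewis's extension statement (Proposition~\ref{Proposition:RntoHermitian}). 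For (b), on the compact interval $[1,m]$ the integrand $|\langle\X,\LL\rangle|^d$ is continuous in $d$ and dominated by $\max(1,|\langle\X,\LL\rangle|^m)$, which is integrable by hypothesis; dominated convergence combined with continuity of $t\mapsto t^{1/d}$ away from $0$ yields continuity of $f(d)=\|A\|_{\X,d}$.

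\textbf{Strategy for parts (c) and (d).} Lyapunov's inequality \eqref{eq:Lyapunov} applied to the nonnegative random variable $|\langle\X,\LL\rangle|$ gives $\|A\|_{\X,p}\le\|A\|_{\X,q}$ for $p\le q$ directly. For the $\M_n$ estimate, substitute into \eqref{eq:NormComplex} and re-apply Lyapunov on $[0,2\pi]$ with the normalized measure $(2\pi)^{-1}\,\mathrm{d}t$; the identity $\int_0^{2\pi}|2\cos t|^d\,\mathrm{d}t = 2\pi\binom{d}{d/2}$ from the proof of Proposition~\ref{p:gennorm} produces exactly the constants $\binom{d}{d/2}^{1/d}$, with the usual reversal of direction when $p,q\le 2$. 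For (d), the function $\LL\mapsto f_{\X,d}(\LL)$ is both convex (it is a norm) and symmetric (the $X_i$ are iid), hence Schur-convex by the classical result recorded in Subsection~\ref{Subsection:SchurConvexity}.

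\textbf{Strategy for parts (e) and (f).} Since $d$ is even, $|\langle\X,\LL\rangle|^d=\langle\X,\LL\rangle^d$, and by independence
\[
M_\Lambda(t) = \E\big[e^{t\langle\X,\LL\rangle}\big] = \prod_{i=1}^n \E\big[e^{t\lambda_i X_i}\big] = \prod_{i=1}^n M(\lambda_i t).
\]
The Taylor series of $M_\Lambda$ at $0$ has $k$th coefficient $\E[\langle\X,\LL\rangle^k]/k!$, so $\|A\|_{\X,d}^d = d!\cdot[t^d]M_\Lambda(t)$; only finitely many terms contribute, so the result is a symmetric polynomial in $\lambda_1,\ldots,\lambda_n$ of degree $d$, and positive definiteness is inherited from (a). For (f), pass to cumulant generating functions:
\[
\log M_\Lambda(t) = \sum_{i=1}^n K(\lambda_i t) = \sum_{r=1}^{d} \kappa_r \,\tr(A^r)\,\frac{t^r}{r!} + O(t^{d+1}),
\]
and comparing with the exponential generating function \eqref{eq:ExpoBell} for $B_d$ yields \eqref{eq:MainBell}. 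The partition form \eqref{eq:RealPermForm} then comes from the classical multinomial expansion $B_d(x_1,\ldots,x_d)=\sum_{\bvec{\pi}\vdash d} y_{\bvec{\pi}}\prod_i x_i^{m_i}$, since our $y_{\bvec{\pi}}$ counts the set partitions with the prescribed block-size multiplicities.

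\textbf{Strategy for part (g) and the main obstacle.} Let $A(t):=e^{it}Z+e^{-it}Z^*$, which is Hermitian. Apply \eqref{eq:RealPermForm} to $A(t)$ and insert into the complexification integral \eqref{eq:NormComplex} to obtain
\[
\cnorm{Z}_{\X,d}^d = \sum_{\bvec{\pi}\vdash d} y_{\bvec{\pi}}\kappa_{\bvec{\pi}}\cdot\frac{1}{2\pi\binom{d}{d/2}}\int_0^{2\pi}\prod_{j=1}^r \tr A(t)^{\pi_j}\,\mathrm{d}t.
\]
Expanding each factor $\tr A(t)^{\pi_j}$ noncommutatively as a sum over length-$\pi_j$ strings in $\{Z,Z^*\}$, the total $t$-dependence of the product is a sum of terms $e^{i(\#Z-\#Z^*)t}\cdot\tr(\cdots)$ ranging over all $2^d$ placements of adjoints among the $d$ slots dictated by $\bvec{\pi}$. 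Integrating against $(2\pi)^{-1}\mathrm{d}t$ annihilates every term with $\#Z\ne\#Z^*$, leaving exactly the $\binom{d}{d/2}$ balanced placements; dividing by $\binom{d}{d/2}$ identifies the survivor with the arithmetic mean $\TT_{\bvec{\pi}}(Z)$, giving \eqref{eq:ExtendedGeneral}. The remaining claims (norm, weak unitary invariance on $\M_n$, restriction to $\H_n$, trace-polynomial form) follow from Proposition~\ref{p:gennorm} together with (a) and (f). The main obstacle is the combinatorial bookkeeping of this Fourier-selection step: keeping the partition structure, the ordered trace placements, and the balanced-string count coherent simultaneously.
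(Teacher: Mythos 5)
Your proposal is correct and follows essentially the same route as the paper: establish that $f_{\X,d}$ is a symmetric norm on $\R^n$ (Minkowski plus the independence/nondegeneracy argument), lift to $\H_n$ via the Lewis machinery, use Lyapunov/$L^p$-monotonicity on the normalized circle measure for (c), convexity plus symmetry for Schur-convexity, the product MGF and cumulant/Bell expansion for (e)--(f), and the Fourier constant-term selection of balanced words for (g), exactly as in Lemma~\ref{Lemma:Expr}. One small caution: in (a) the triangle inequality on $\H_n$ is not ``immediate'' from the norm on $\R^n$, since $\LL(A+B)\neq\LL(A)+\LL(B)$ in general; it is supplied precisely by the result you cite (Proposition~\ref{Proposition:RntoHermitian}, via Ky Fan majorization, Hardy--Littlewood--P\'olya, and Birkhoff), so make sure that citation carries the norm axioms and not merely the weak unitary invariance.
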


\begin{example}\label{Ex:d=2}
	A more precise definition of $\TT_{\bvec{\pi}}(Z)$ appears in Subsection \ref{Subsection:ProofE},
	although the examples in Section \ref{Section:Examples} better illustrate how to compute \eqref{eq:ExtendedGeneral}. For now, let $d=2$. 
	If $\mu$ and $\sigma^2$ denote the mean and variance of $X_i$,  respectively, then
	\vspace{-4pt}
	\begin{align} 
		\cnorm{Z}_{\X,2}^2 &=   \sum_{\bvec{\pi} \,\vdash\, 2}  y_{\bvec{\pi}} \kappa_{\bvec{\pi}} \TT_{\bvec{\pi}}(Z) =  y_{(2)}\kappa_{(2)}  \TT_{(2)}(Z) +  y_{(1,1)}\kappa_{(1,1)} \TT_{(1,1)}(Z) \nonumber\\ 
		&= \frac{ \kappa_{2} \big(\!\tr(ZZ^*)\!+\tr(Z^*Z) \big)}{2!^1 1!} + \frac{ \kappa_{1}^2 \big(\!\tr(Z)\tr(Z^*)\!+\tr(Z^*)\tr(Z) \big)}{1!^2 2!} \nonumber\\
		&= \label{eq:d=2}  \sigma^2 \norm{Z}_{\operatorname{F}}^2+\mu^2 |\tr Z|^2,
	\end{align}
	where the last equality holds because $\kappa_1=\mu$ and $\kappa_2=\sigma^2$. If $\mu = 0$, then $\cnorm{\cdot}_{\X,2}$
	is a nonzero multiple of the Frobenius norm since the variance $\sigma^2$ 
	is positive by nondegeneracy. As predicted by Theorem \ref{Theorem:Main2}, 
	the norm \eqref{eq:NormComplex} on $\M_n$ reduces to \eqref{eq:MainBell} on $\H_n$ since $B_2(x_1,x_2)=x_1^2+x_2$ and
	\begin{align*}
		\norm{A}_{\X,2}^2
		&=B_2( \kappa_1 \tr A, \kappa_2 \tr A^2)
		= (\kappa_1 \tr A)^2 + \kappa_2 \tr (A^2) \\
		&= \sigma^2 \tr(A^2) + \mu^2 (\tr A)^2, 
	\end{align*}
	which agrees with \eqref{eq:d=2} if $Z = A = A^*$.
\end{example}

\begin{remark}
The original proof that ${\norm{\cdot}}_{\X,d}$ is a weakly unitarily invariant norm on $\H_n$ \cite[Thm.~1.1.(a)]{Ours} requires $d \geq 2$ and uses Lewis' framework for group invariance in convex matrix analysis \cite{LewisGroup}. However, Theorem \ref{Theorem:Main2}.(a) now follows directly from Theorem \ref{Theorem:Classify} and holds for $d\geq 1$.
\end{remark}

\begin{remark}
	Proving that $\cnorm{\cdot}_{\X,d}$ is a norm on $\M_n$ relies on Theorem \ref{Theorem:Main2}.(a), which states that its restriction to $\H_n$ is a norm. 
	On the other hand, demonstrating that \eqref{eq:ExtendedGeneral} is a norm directly seems arduous. 
	To a certain degree, this mirrors the current absence of general certificates for dimension-independent positivity of trace polynomials in $x,x^*$. See \cite{KSV} for the analysis in a dimension-fixed setting.
\end{remark}

\begin{remark}
The positivity of random vector norms is not obvious since
they involve the eigenvalues of $A \in \H_n$ and not their absolute values. 
Consequently, these norms on $\H_n$ do not arise from symmetric gauge functions \cite[Sect.~7.4.7]{HJ}. 
In fact, random vector norms can sometimes distinguish singularly (adjacency) cospectral graphs
(graphs with the same singular values) that are not adjacency cospectral. This feature is not enjoyed by many standard norms
	(for example, the operator, Frobenius, Schatten--von Neumann, and Ky Fan norms). The matrix
	\vspace{-1pt}
	\begin{equation*}
		K = 
		\begin{bmatrix}
			0 & 1 & 1\\[-2pt]
			1 & 0 & 1\\[-2pt]
			1 & 1 & 0
		\end{bmatrix},\vspace{-1pt}
	\end{equation*}
	has eigenvalues $2,-1,-1$ and is the adjacency matrix for the complete
	graph on three vertices.
	The graphs with adjacency matrices $A=\left[\begin{smallmatrix}
		K&0\\0&K
	\end{smallmatrix} \right]$ and $B=\left[\begin{smallmatrix}
	0&K\\K&0
	\end{smallmatrix} \right]$
	are singularly cospectral but not cospectral: their eigenvalues are $-1, -1, -1, -1, 2, 2$ and 
	$-2,-1,-1,1,1,2$, respectively.  
	Moreover, $\cnorm{A}_{\X,6}^6=1350 \neq 1260 = \cnorm{B}_{\X,6}^6$ when 
	the $X_i$ follow a Gamma distribution with $\alpha=1$ and $\beta=1/2$.
\end{remark}

\subsection{Proof of Theorem \ref{Theorem:Main2}.(a)}\label{subsec:Proof1}

The function $f_{\X,d}$ defined in \eqref{eq:norm} is symmetric because the entries of $\X$ are iid. In light of Theorem \ref{Theorem:Classify}, it suffices to show that $f_{\X,d}$ is a norm on $\R^n$ to establish the claim. 
We remark that in \cite[Sec.~3.1]{Ours}, the proofs for absolute homogeneity and the triangle inequality are valid for $d\geq 1$. 
However, the proof for positive definiteness there requires $d\geq 2$. The proof of positive definiteness below first appeared in \cite{CGH} and holds for $d\geq 1$.

\begin{proposition}\label{Proposition:Main1}
The function $f_{\X,d}$ in \eqref{eq:norm} defines a norm on $\R^n$ for all $d\geq 1$.
\end{proposition}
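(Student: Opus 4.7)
The plan is to verify the four norm axioms in the natural order. Nonnegativity is immediate from the definition, and $f_{\X,d}(\boldsymbol{\lambda})$ is finite because each $X_i \in L^d(\Omega,\F,\P)$ forces $\langle \X,\boldsymbol{\lambda}\rangle = \sum_{i=1}^n \lambda_i X_i$ to lie in $L^d$. Absolute homogeneity follows by observing $\langle \X, c\boldsymbol{\lambda}\rangle = c\langle \X, \boldsymbol{\lambda}\rangle$, pulling $|c|^d$ out of the expectation, and taking $d$-th roots.

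For the triangle inequality, I would simply recognize $f_{\X,d}(\boldsymbol{\lambda})$ as the $L^d(\Omega,\F,\P)$-norm of the image of $\boldsymbol{\lambda}$ under the $\R$-linear map $\boldsymbol{\lambda} \mapsto \langle \X, \boldsymbol{\lambda}\rangle$. Minkowski's inequality in $L^d$, which is valid precisely for $d \geq 1$, then yields the triangle inequality at once.

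The only subtle point is positive definiteness. Assuming $f_{\X,d}(\boldsymbol{\lambda}) = 0$, I would deduce $\langle \X, \boldsymbol{\lambda}\rangle = 0$ almost surely and then mirror the independence argument from Subsection \ref{Subsection:CHSExpectation}. Let $i_1,\ldots,i_k$ index the nonzero entries of $\boldsymbol{\lambda}$. If $k = 1$, then $X_{i_1} = 0$ a.s., contradicting nondegeneracy. If $k \geq 2$, solving for $X_{i_1} = -\sum_{j=2}^{k} (\lambda_{i_j}/\lambda_{i_1}) X_{i_j}$ a.s.\ exhibits $X_{i_1}$ as measurable with respect to $\sigma(X_{i_2},\ldots,X_{i_k})$, while independence forces $X_{i_1}$ to be independent of that $\sigma$-algebra; hence $X_{i_1}$ is independent of itself and therefore a.s.\ constant, again violating nondegeneracy.

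The main obstacle to flag is that the argument must work uniformly for $d \geq 1$, so one cannot take the attractive shortcut of using variances and covariances when $d < 2$. The ``independent of itself'' step, however, requires only measurability and independence and uses no moment beyond what the hypothesis $X_i \in L^d$ provides, so the same contradiction works throughout the range $d \geq 1$.
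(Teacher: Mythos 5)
Your proposal is correct and follows essentially the same route as the paper: homogeneity by pulling $|c|^d$ out of the expectation, the triangle inequality via Minkowski's inequality in $L^d$, and positive definiteness by the same independence argument on the nonzero entries of $\boldsymbol{\lambda}$. Your "independent of itself, hence almost surely constant" step is in fact a slightly more explicit justification of the contradiction the paper states tersely, so nothing is missing.
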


\begin{proof} 

\medskip\noindent\textsc{Positive Definiteness}.  
If $f_{\X,d}(\boldsymbol{\lambda})=0$, then $\E|\langle \X,\boldsymbol{\lambda}\rangle|^d=0$. 
The nonnegativity of $|\langle \X,\boldsymbol{\lambda}\rangle|^d$ ensures that
\begin{equation}\label{eq:Independence}
    \lambda_1X_1+\lambda_2X_2+\cdots+\lambda_nX_n=0 
\end{equation} 
almost surely. Assume that \eqref{eq:Independence} has a nontrivial solution $\boldsymbol{\lambda}$ with nonzero entries $\lambda_{i_1}, \lambda_{i_2}, \ldots, \lambda_{i_k}$. If $k=1$, then $X_{i_k}=0$ almost surely, which contradicts the nondegeneracy of our random variables. If $k>1$, then \eqref{eq:Independence} implies that
\begin{equation}\label{eq:Contradiction}
    X_{i_1}=a_{i_2}X_{i_2}+a_{i_3}X_{i_3}+\cdots +a_{i_k}X_{i_k}
\end{equation}
almost surely, in which $a_{i_j}=-\lambda_{i_j}/\lambda_{i_1}$.  
The independence of $X_{i_1}, X_{i_2}, \ldots, X_{i_k}$ ensures the independence of 
$X_{i_1}, a_{i_2}X_{i_2}, a_{i_3}X_{i_3},\ldots, a_{i_k}X_{i_k}$,
which contradicts \eqref{eq:Contradiction}. Relation \eqref{eq:Independence} therefore has no nontrivial solutions.

\medskip\noindent\textsc{Absolute Homogeneity}. 
The bilinearity of the inner product and the linearity of expectation imply
\begin{equation*}
f_{\X,d}(c\boldsymbol{\lambda})
=\left( \E |c\langle\X, \bvec{\lambda}\rangle|^d\right)^{\smash{\!1/d}}
\!\!=\left( |c|^d\E |\langle\X, \bvec{\lambda}\rangle|^d\right)^{\smash{\!1/d}}
\!\!=|c|f_{\X,d}(\boldsymbol{\lambda}).
\end{equation*}

\smallskip\noindent\textsc{Triangle Inequality}. 
For $\boldsymbol{\lambda}, \boldsymbol{\mu}\in \R^n$, define the random variables 
$X=\langle \X,\boldsymbol{\lambda}\rangle$ and $Y=\langle \X,\boldsymbol{\mu}\rangle$.
Minkowski's inequality ensures that
\begin{equation*}
\big(\E\vert \langle \X, \boldsymbol{\lambda}+\boldsymbol{\mu}\rangle\vert^d\big)^{1/d}
=\big(\E|X+Y|^d\big)^{1/d} \leq \big(\E|X|^d\big)^{1/d}+\big(\E|Y|^d\big)^{1/d}. 
\end{equation*} 
The triangle inequality for $f_{\X,d}$ follows.
\end{proof}

\begin{remark}\label{Remark:Conjugation}
Remark 3.4 of \cite{Ours} is somewhat misleading. There it is stated that the entries of $\X$ must be identically distributed but not necessarily independent. To clarify, the entries of $\X$ being identically distributed guarantees that $\norm{\cdot}_{\X,d}$ satisfies the triangle inequality on $\H_n$. The additional assumption of independence guarantees that $\norm{\cdot}_{\X,d}$ is also positive definite.
\end{remark}

\subsection{Proof of Theorem \ref{Theorem:Main2}.(b)}
\vspace{-2pt}

As usual, let $\boldsymbol{\lambda}$ denote the vector of eigenvalues of $A$. Let $\mu_{Y}$  denote the pushforward measure of the random variable $Y = \langle\X, \bvec{\lambda}\rangle$, so that
\vspace{-1pt}
\begin{equation*}
    \big(f(d)\big)^d = \E|\langle\X, \bvec{\lambda}\rangle|^d  = \int |x|^d \, d\mu_{Y}.\vspace{-1pt}
\end{equation*} 
The bound $|x|^d \leq |x| + |x|^m$ holds for all $x\in \R$ and $1 \leq d \leq m$. Therefore, 
\vspace{-1pt}
\begin{equation*}
    \int |x|^d \, d\mu_{Y} \leq  \int |x|  \, d\mu_Y + \int |x|^m \, d\mu_Y.\vspace{-1pt}
\end{equation*} 
Consequently, we may bound $\big(f(d)\big)^d$ by the $1$- and $m$-norms:
\vspace{-1pt}
\begin{equation*}
\big(f(d)\big)^d \leq \norm{A}_{\X,1}+ \norm{A}_{\X,m}^m.
\end{equation*} 
If $d_i\to d$, then $\int |x|^{d_i}d\mu_Y\to \int |x|^{d}d\mu_Y$ by the dominated convergence theorem. 
The function $d\mapsto f^d$ is therefore continuous. The
continuity of the gamma function establishes continuity for $f^d$ and therefore of $f$. $\hfill \qed$

\vspace{-2pt}
\subsection{Proof of Theorem \ref{Theorem:Main2}.(c)}
\vspace{-2pt}

Let $A \in \H_n$ have eigenvalues $\bvec{\lambda} = (\lambda_1,\ldots,\lambda_n)$, listed in decreasing order, and let $\bvec{X} = (X_1,\ldots,X_n)$ be an iid random vector. 
Lyapunov's inequality \eqref{eq:Lyapunov} ensures that
\vspace{-1pt}
\begin{equation}\label{eq:Probable}
	\norm{A}_{\X,p} = \E\big[ | \inner{ \bvec{X} , \bvec{\lambda} }|^{p}  \big]^{\!\frac{1}{p}}  \!\leq \E\big[ | \inner{ \bvec{X} , \bvec{\lambda} }|^{q}
	 \big]^{\!\frac{1}{q}} \!=  \norm{A}_{\X,q}.
\end{equation}
Now, let $Z \in \M_n$. Consider $f(t) = \norm{e^{it} Z + e^{-it} Z^*}_{\X,q}$ as an element of $L^p[0,2\pi]$.
H\"older's inequality and \eqref{eq:NormComplex} imply the desired inequality:
\vspace{-1pt}
\begin{align*}
	\tbinom{p}{p/2}^{\!1/p}\cnorm{Z}_{\X,p} 
	&= \bigg(\frac{1}{2\pi }\int_0^{2\pi} \norm{ e^{it} A+e^{-it} A^*}_{\X,p}^p\,\mathrm{d}t\bigg)^{\!\!1/p} \\[-4pt]
	&\leq \bigg(\frac{1}{2\pi }\int_0^{2\pi} \norm{ e^{it} A+e^{-it} A^*}_{\X,q}^p\,\mathrm{d}t\bigg)^{\!\!1/p} =  \norm{f}_{L^p}  \\[-4pt]
	&\leq  \norm{f}_{L^q} = \bigg(\frac{1}{2\pi }\int_0^{2\pi} \norm{ e^{it} A+e^{-it} A^*}_{\X,q}^q\,\mathrm{d}t\bigg)^{\!\!1/q}\\
	&= \tbinom{q}{q/2}^{\!1/q}\cnorm{Z}_{\X,q}. 
\end{align*}
The inequalities are reversed if $1\leq p \leq q \leq 2$. $\hfill \qed$

\begin{remark}
	The previous result suggests that suitable constant multiples of the CHS norms may be preferable in
	some circumstances.  However, the benefits appear to be outweighed by the cumbersome
	nature of these constants.
\end{remark}


\subsection{Proof of Theorem \ref{Theorem:Main2}.(d)}
Let $\X$ be a random vector in $\R^n$ whose entries lie in $L^d(\Omega, \mathcal{F}, \P)$ and are identically distributed, and let $f_{\X,d}(\vec{x})$ be defined as in \eqref{eq:norm}. Given vectors $\bvec{x},\bvec{y}\in \R^n$, $f$ satisfies $f(\vec{x}+\vec{y})\leq f(\bvec{x}) + f(\bvec{y})$, as seen in the proof of Theorem \ref{Theorem:Main2}.(a). Homogeneity implies that $f$ is convex on $\R^n$. Finally, $f$ is symmetric since the entries of $\X$ are identically distributed. Recall that a convex function $f:\R^n\to \R$ is  Schur-convex if and only if it is symmetric \cite[p.~258]{Roberts}. It follows that $f$ is Schur-convex. Therefore,
$\bvec{\lambda}(A)\mapsto f(\lambda_1, \lambda_2, \ldots, \lambda_n)=\norm{A}_{\X,d}$ is Schur-convex. $\hfill \qed$

\begin{remark}\label{Remark:SchurNot}
	Note that independence is not required in the previous argument.
\end{remark}

\subsection{Proof of Theorem \ref{Theorem:Main2}.(e)}
Let $d \geq 2$ be even and let $\X=(X_1, X_2, \ldots, X_n)$ be an iid random vector whose entries  admit a moment generating function $M(t)$.     Let $A \in \H_n$ have 
eigenvalues $\lambda_1 \geq \lambda_2 \geq \cdots \geq \lambda_n$.
If $\Lambda =\langle \X, \LL \rangle=\lambda_1X_1+\lambda_2X_2+\cdots +\lambda_n X_n$, then independence ensures that
$M_{\Lambda}(t) = \prod_{i=1}^n M(\lambda_i t)$. Therefore,
\begin{equation*}
 \norm{A}_{\X,d}^d=\E [\Lambda^d] = d! \cdot [t^d] M_{\Lambda}(t) \tag*{\qed}
\end{equation*}

\subsection{Proof of Theorem \ref{Theorem:Main2}.(f)}
Maintain the same notation as above.  However, we only assume existence of the first $d$ moments of the entries of $\X$. In this case,  $M_{\Lambda}(t)$ is a formal series with $\kappa_1,\kappa_2,\ldots,\kappa_d$ determined by \eqref{eq:CumulantMoment} and the remaining
cumulants treated as formal variables.  The moment generating function $M_{\Lambda}(t)$ satisfies
\begin{align*}
M_{\Lambda}(t) 
&= \prod_{i=1}^n M(\lambda_i t) 
\overset{\eqref{eq:CumulantGen}}{=} \exp\!\bigg( \sum_{i=1}^n K(\lambda_i t) \bigg)
\overset{\eqref{eq:CumulantGen}}{=} \exp\!\bigg( \sum_{j=1}^{\infty} \kappa_j (\lambda_1^j +\cdots +\lambda_n^j)\frac{t^j}{j!}\bigg) \\
&=\exp \!\bigg( \sum_{j=1}^{\infty} \kappa_j \tr (A^j)\frac{t^j}{j!}\bigg) 
\overset{\eqref{eq:ExpoBell}}{=}\sum_{\ell=0}^{\infty} B_{\ell}(\kappa_1\tr A, \kappa_2\tr A^2, \ldots, \kappa_{\ell}\tr A^{\ell})\frac{t^{\ell}}{\ell !}. 
\end{align*}
Expanding the right side of \eqref{eq:ExpoBell} yields
\begin{equation}\label{eq:Bxy}
B_{\ell}(x_1,x_2, \ldots, x_{\ell})=\ell ! \!\!\!\sum_{\substack{j_1,j_2, \ldots, j_{\ell}\geq 0\\ j_1+2j_2+\cdots +\ell j_{\ell}=\ell  }}\prod_{r=1}^{\ell} \frac{x_r^{j_r}}{(r!)^{j_r} j_r!}= \sum_{\bvec{\pi}\vdash \ell} x_{\bvec{\pi}} y_{\bvec{\pi}},
\end{equation} 
in which $x_{\bvec{\pi}}=x_{i_1}x_{i_2}\cdots x_{i_j}$ for each partition $\bvec{\pi}=(i_1, i_2, \ldots, i_j)$ of $\ell$. 
Substitute $x_i= \kappa_i \tr (A^i)$ above and obtain
\begin{equation*}
\norm{A}_{\X,d}^d= d! \cdot [t^d] M_{\Lambda}(t) = B_{d}(\kappa_1\tr A, \kappa_2\tr A^2, \ldots, \kappa_d\tr A^d). 
\end{equation*}
Finally, \eqref{eq:Bxy} and the above ensure that
$
\norm{A}_{\X,d}^d = \sum_{\bvec{\pi}\vdash d} y_{\bvec{\pi}}\kappa_{\bvec{\pi}} p_{\bvec{\pi}}
$ 
for $A \in \H_n$. $\hfill \qed$

\subsection{Proof of Theorem \ref{Theorem:Main2}.(g)}\label{Subsection:ProofE}

Let $(x,x^*)^n = \big\{a_1a_2\cdots a_n : a_k\in\{x,x^*\}\!\big\}$ be the set of all words of length $n$ composed of the letters $x$ and $x^*$, and let $|w|_x$ count the occurrences of $x$ in a word $w$. For $Z\in \M_n$, let $w(Z)\in \M_n$ be the natural evaluation of $w$ at $Z$.
For example, if $w = xx^*x^2$, then $|w|_x = 3$ and $w(Z) = Z Z^* Z^2$.
The following lemma is \cite[Lem.~16]{Aguilar}.

\begin{lemma}\label{Lemma:Expr}
	Let $d\geq 2$ be even and $\bvec{\pi}=(\pi_1,\pi_2,\ldots,\pi_r)\vdash d$. 
	For $Z\in \M_n$, 
	\begin{equation}\label{eq:int2poly}
		\begin{split}
			&\frac{1}{2\pi} \int_0^{2\pi}\tr(e^{it} Z+e^{-it} Z^*)^{\pi_1}\cdots \tr(e^{it}Z+e^{-it}Z^*)^{\pi_r}\,\mathrm{d}t \\
			&\qquad\qquad = \!\sum_{\substack{
					w_j\in (x,x^*)^{\pi_j}  \\
					|w_1\cdots w_r|_x = \smash{\frac{d}{2}}
			}} \!\!\tr w_1(Z)\cdots\tr w_r(Z).
		\end{split}
	\end{equation}
\end{lemma}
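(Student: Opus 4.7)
The approach is a direct expansion combined with the standard orthogonality relation $\frac{1}{2\pi}\int_0^{2\pi} e^{ikt}\,\mathrm{d}t = \delta_{k,0}$ for $k\in\Z$. The right-hand side of \eqref{eq:int2poly} is a sum indexed by tuples of words $(w_1,\ldots,w_r)$ with $w_j\in(x,x^*)^{\pi_j}$, and the constraint $|w_1\cdots w_r|_x=d/2$ is exactly what the Fourier integral will pick out.

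First, I would fix an index $j$ and expand the $\pi_j$-th power of the \emph{noncommutative} binomial $e^{it}Z + e^{-it}Z^*$ as a sum over all $2^{\pi_j}$ ordered choices of factors, one per position. Each such choice corresponds to a word $w_j = a_1 a_2 \cdots a_{\pi_j}$ with $a_k\in\{x,x^*\}$, and contributes
\begin{equation*}
	(e^{it}Z+e^{-it}Z^*)^{\pi_j} = \sum_{w_j\in(x,x^*)^{\pi_j}} e^{it(2|w_j|_x-\pi_j)}\, w_j(Z),
\end{equation*}
since every letter $x$ carries a factor $e^{it}$ and every letter $x^*$ carries $e^{-it}$, so the net phase is $e^{it(|w_j|_x-|w_j|_{x^*})} = e^{it(2|w_j|_x-\pi_j)}$. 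Applying $\tr$ (which is linear) and then taking the product over $j=1,2,\ldots,r$ gives
\begin{equation*}
	\prod_{j=1}^r \tr(e^{it}Z+e^{-it}Z^*)^{\pi_j} = \sum_{w_1,\ldots,w_r} e^{it\bigl(2|w_1\cdots w_r|_x - d\bigr)} \prod_{j=1}^r \tr w_j(Z),
\end{equation*}
where I used $\pi_1+\pi_2+\cdots+\pi_r = d$ and the sum runs over $w_j\in(x,x^*)^{\pi_j}$.

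Second, I would integrate termwise over $t\in[0,2\pi]$ and divide by $2\pi$. Since $2|w_1\cdots w_r|_x - d$ is always an integer, orthogonality of the characters $\{e^{ikt}\}_{k\in\Z}$ shows that the contribution of $(w_1,\ldots,w_r)$ vanishes unless $|w_1\cdots w_r|_x = d/2$, in which case it equals $\prod_{j=1}^r \tr w_j(Z)$. Collecting the surviving terms yields exactly \eqref{eq:int2poly}.

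There is no serious obstacle: the only subtlety is being careful about the ordering of the $a_k$'s when expanding the noncommutative power (so that one really gets the word $w_j(Z)$ rather than some reordering), and recognizing that the phase accumulated by a word depends only on the \emph{count} $|w_j|_x$, not on the positions of the $x$'s. Everything else is Fubini and orthogonality on the unit circle.
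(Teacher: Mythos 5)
Your argument is correct and is essentially the paper's own proof: the paper views the product of traces as a Laurent polynomial in $z=e^{it}$ and extracts its constant term, which is precisely your expansion into words with phases $e^{it(2|w_1\cdots w_r|_x-d)}$ followed by orthogonality of $e^{ikt}$ on the circle. The noncommutative expansion and the bookkeeping of which words survive are identical in both treatments.
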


\begin{proof}
For every Laurent polynomial $f\in \C[z,z^{-1}]$ with constant term $f_0$ we have $\int_0^{2\pi}f(e^{it})\,\mathrm{d}t=2\pi f_0$. Let us view
\begin{equation*}
f=\tr(zZ+z^{-1}Z^*)^{\pi_1}\cdots \tr(zZ+z^{-1}Z^*)^{\pi_r}
\end{equation*}
as a Laurent polynomial in $z$.  The constant term of $f$ is
\begin{equation*}
f_0=\sum_{w_1,\dots,w_r} \tr w_1(Z)\cdots\tr w_r(Z)
\end{equation*}
where the sum runs over all words $w_1,w_2,\ldots,w_r$ in $(x,x^*)^d$ with $|w_j|=\pi_j$ such that the number of occurrences of $x$ in $w_1 w_2\cdots w_r$ equals the number of occurrences of $x^*$ in $w_1 w_2\cdots w_r$. 
\end{proof}

Let $d \geq 2$ be even and let $\bvec{\pi}=(\pi_1,\pi_2,\ldots,\pi_r)\vdash d$. For $Z\in \M_n$, let 
\begin{equation}\label{eq:TDef}
	\TT_{\bvec{\pi}}(Z)=\frac{1}{\binom{d}{d/2}}\sum_{\substack{
			w_j\in (x,x^*)^{\pi_j}  \\
			|w_1\cdots w_r|_x = \smash{\frac{d}{2}}
		}} \!\!\tr w_1(Z)\cdots\tr w_r(Z),
\end{equation}
that is, $\TT_{\bvec{\pi}}(Z)$ is the arithmetic mean of the $\binom{d}{d/2}$ terms obtained by placing $d/2$ adjoints ${}^*$ among the $d$ copies of $Z$ in
\begin{equation*}
\vphantom{{\displaystyle1_2}}(\tr \underbrace{ZZ\cdots Z}_{\pi_1})
	(\tr \underbrace{ZZ\cdots Z}_{\pi_2})
	\cdots
	(\tr \underbrace{ZZ\cdots Z}_{\pi_r}). 
\end{equation*}
Consider the conjugate transpose ${}^*$ on $\V=\M_n$.
The corresponding real subspace of $*$-fixed points is $\V_{\R} = \H_n$. 
If $Z \in \M_n$, applying Proposition \ref{p:gennorm} to the norm $\norm{\cdot}_{\vec{X},d}$ on $\H_n$ ensures that the extension $\cnorm{\cdot}_{\vec{X},d}$ defined by \eqref{eq:NormComplex} is a norm on $\M_n$:
\begin{align*}
	\cnorm{Z}_{\vec{X},d}^d
	\!&\overset{\eqref{eq:NormComplex}}{=}\!  \frac{1}{2\pi \binom{d}{d/2}} \int_0^{2 \pi} \norm{ e^{it} Z + e^{-it}Z^*}_{\bvec{X},d}^d \,\mathrm{d}t \\
	&\overset{\eqref{eq:RealPermForm}}{=}\!  \frac{1}{2\pi \binom{d}{d/2}} \int_0^{2 \pi} \sum_{\bvec{\pi} \,\vdash\, d} y_{\bvec{\pi}} \kappa_{\bvec{\pi}}p_{\bvec{\pi}}( \bvec{\lambda}(e^{it} Z  + e^{-it}Z^*))\,\mathrm{d}t  \\
	&\,\overset{\eqref{eq:pTrace}}{=}\!  \frac{1}{\binom{d}{d/2}}\! \sum_{\bvec{\pi} \,\vdash\, d} \frac{y_{\bvec{\pi}} \kappa_{\bvec{\pi}}}{2\pi} \int_0^{2\pi} \!\tr(e^{it} Z + e^{-it}Z^*)^{\pi_1} \cdots
	\tr (e^{it} Z + e^{-it} Z^*)^{\pi_r} \mathrm{d}t  \\
	&\overset{\eqref{eq:int2poly}}{=}\!  \frac{1}{\binom{d}{d/2}}\! \sum_{\bvec{\pi} \,\vdash\, d} y_{\bvec{\pi}} \kappa_{\bvec{\pi}} \sum_{\substack{
			w_j\in (x,x^*)^{\pi_j}  \\
			|w_1\cdots w_r|_x = \smash{\frac{d}{2}}
		}} \!\!\tr w_1(Z)\cdots\tr w_r(Z) \\
	&\overset{\eqref{eq:TDef}}{=}\!   \sum_{\bvec{\pi} \,\vdash\, d} y_{\bvec{\pi}} \kappa_{\bvec{\pi}} \TT_{\bvec{\pi}}(Z) .
	\tag*{\qed}
\end{align*}

\begin{remark}\label{Remark:Czz}
Here is another way to restrict $\cnorm{\cdot}_{\X,d}$ to the Hermitian matrices. 
The proof of the above shows that $\binom{d}{d/2} \cnorm{A}_{\X,d}^d $ is the coefficient of $z^{d/2}\overline{z}^{d/2}$ in 
\begin{equation*}
\cnorm{zA+\overline{z}A^*}_{\X,d}^d \in \C[z,\overline{z}].
\end{equation*}
\end{remark}

\section{Examples and Applications}\label{Section:Examples}
In this section, several examples are given for the random vector norms $\|\cdot\|_{\X,d}$ on $\H_n$ 
and their extensions $\cnorm{\cdot}_{\X,d}$ to $\M_n$ for varying $d$ and different distributions of the random variables $X_i$. We begin with computations for small $d$ in Subsection \ref{Subsection:Small}, and then consider finite discrete random variables in Subsections \ref{Subsection:Poisson}--\ref{Subsec:Rademacher}. 

Let us pause briefly to recall some facts about discrete random variables.
If the random variable $X$ is supported on $\{a_1, a_2, \ldots, a_{\ell}\} \subset \R$, with
$\P(X=a_j)=q_j>0$ for $1\leq j \leq \ell$ and $q_1+q_2+\cdots +q_{\ell}=1$, then
$\E[X^k]=\sum_{j=1}^{\ell} q_j a_j^k$. Thus, 
\begin{equation}
	M(t)=\sum_{j=1}^{\ell}q_j\bigg(\sum_{k=0}^{\infty}  a_j^k \frac{t^k}{k!}\bigg)=\sum_{j=1}^{\ell} q_j e^{a_jt}.\label{eq:FiniteDiscrete}
\end{equation} 

Continuous random variables are treated in Subsections \ref{Subsection:a-stable}--\ref{Subsection:Gamma}. In particular, the norms arising from the normal distribution are treated in Subsection \ref{Subsection:Normal}. Moreover, we show in Subsection \ref{Subsection:Gamma} that Gamma random variables yield a norm related to the CHS polynomials, studied in Section \ref{Section:Hunter}. This observation will be crucial in generalizing the results of Section \ref{Section:Hunter} in Subsection \ref{Subsection:Hunter}.

Lastly, recall that a norm is determined by its unit ball. 
This provides one way to ``visualize''  random vector norms.  We make use of this idea in 
Figures \ref{Figure:Bernoulli}, \ref{Figure:Bernoulli2}, \ref{Figure:Normal}, \ref{Figure:Pareto124}, \ref{Figure:Exponentials}
below, which depict ordered pairs $(\lambda_1,\lambda_2)$ such that $\|\diag(\lambda_1,\lambda_2)\|_{\X,d} = 1$.

\subsection{The case \texorpdfstring{$\bvec{d=4}$}{d=4}}\label{Subsection:Small}
Let $\X=(X_1, X_2, \ldots, X_n)$, where the $X_i$ are 
nondegenerate iid random variables
such that the stated cumulants and moments exist.
For $d=4$, we obtain trace-polynomial representations of 
$\cnorm{Z}_{d}$ in terms of cumulants or moments using \eqref{eq:ExtendedGeneral}, as was done in Example \ref{Ex:d=2} for $d=2$. This can also be done for $d=6,8,\dots$, 
but we refrain from the exercise.

\begin{example}\label{Example:General4}
	The five partitions of $d=4$ satisfy
	\begin{equation*}
		\begin{array}{rclrclrclrclrcl}
			\kappa_{(4)} &=& \kappa_4,\quad
			&\kappa_{(3,1)} &=& \kappa_1 \kappa_3,\quad
			&\kappa_{(2,2)} &=& \kappa_2^2,\quad
			&\kappa_{(2,1,1)} &=& \kappa_2 \kappa_1^2,\quad 
			&\kappa_{(1,1,1,1)} &=& \kappa_1^4, \\
			y_{(4)} &=& 1,\quad
			&y_{(3,1)} &=& 4,\quad
			&y_{(2,2)} &=& 3,\quad
			&y_{(2,1,1)} &=& 6,\quad 
			&y_{(1,1,1,1)} &=& 1.
		\end{array}
	\end{equation*}
	There are $\binom{4}{2} = 6$ ways to place two adjoints ${}^*$ in a string of four $Z$s.  
	For example, 
	\begin{align*}
		6\TT_{(3,1)}(Z)
		&= (\tr Z^*Z^*Z)(\tr Z) + (\tr Z^*ZZ^*)(\tr Z) + (\tr Z^*ZZ)(\tr Z^*) \\
		&\qquad + (\tr ZZ^*Z^*)(\tr Z) +(\tr ZZ^*Z)(\tr Z^*) +(\tr ZZZ^*)(\tr Z^*)\\
		&=3 \tr (Z^{*2}Z)(\tr Z) +3 (\tr Z^2 Z^*)(\tr Z^*).
	\end{align*}
	Summing over all five partitions yields the following norm on $\M_n$:
	\begin{align}
		\cnorm{Z}_{\X,4}^4
		&= 
		\kappa_1^4 (\tr Z^*)^2 (\tr Z)^2 
		+ \kappa_2 \kappa_1^2 (\tr Z^*)^2 \tr (Z^2) 
		+ \kappa_2 \kappa_1^2 \tr (Z^{*2})(\tr Z)^2  \nonumber \\
		&\qquad+4 \kappa_2 \kappa_1^2  (\tr Z^*) (\tr Z^*Z) (\tr Z)
		+2 \kappa_3 \kappa_1 \tr ( Z^{*2}Z)(\tr Z) \nonumber \\
		&\qquad +2 \kappa_3 \kappa_1 \tr (Z^*) \tr (Z^* Z^2 ) 
		+2 \kappa_2^2 (\tr  Z^*Z)^2
		+ \kappa_2^2 \tr (Z^2) \tr (Z^{*2}) \nonumber \\
		&\qquad +\tfrac{2}{3} \kappa_4 \tr (Z^2 Z^{*2})+\tfrac{1}{3} \kappa_4 \tr (Z Z^* Z Z^*) . \label{eq:Z44}
	\end{align}
	If $Z = A\in \H_n$,
	Theorem \ref{Theorem:Main2}.(f) and \eqref{eq:Bell4} ensure that the above reduces to
	\begin{align*}
		\norm{Z}_{\X,4}^4 
		&=
		\kappa _1^4 (\tr A)^4+6 \kappa _1^2 \kappa _2 \tr(A^2) (\tr A)^2
		+4 \kappa _1 \kappa _3 \tr (A^3) \tr (A) 
		\\
		&\qquad + 3 \kappa _2^2 \tr (A^2)^2+\kappa _4 \tr (A^4) .
	\end{align*}
\end{example}

\subsection{Poisson random variables}\label{Subsection:Poisson}
Let $\X=(X_1, X_2, \ldots, X_n)$, in which the $X_i$ are independent random variables on 
$\{0,1,2,\ldots\}$ distributed according to the probability density function
$f(t)= \frac{e^{-\alpha} \alpha^t}{t!}$,
in which $\alpha >0$.
The moment and cumulant generating functions of the $X_i$ are
$M(t)= e^{\alpha  (e^t-1)}$ and
$K(t) = \alpha  (e^t-1)$,
respectively. Therefore, $\kappa_i = \alpha$ for all $i \in \N$ and hence, for even $d\geq 2$,
\begin{equation*}
	\norm{A}_{\X, d}^d= \sum_{\bvec{\pi}\vdash d}    \alpha^{|\bvec{\pi}|} y_{\bvec{\pi}} p_{\bvec{\pi}} \quad \text{for $A\in \H_n$.}
\end{equation*} 
For example,
\begin{align*}
	\norm{A}_{\X,4}^4
	=\alpha^4(\tr A)^4 \!+\!6\alpha^3(\tr A)^2\tr A^2 \!+\! 4\alpha^2\tr A\tr A^3 \!+\! 3\alpha^2(\tr A^2)^2 \!+\! \alpha \tr A^4.
\end{align*}

\subsection{Bernoulli random variables}\label{Subsection:Bernoulli}
A \emph{Bernoulli} random variable is a discrete random variable $X$ defined by the relation $\P(X=k)=q^k(1-q)^{1-k}$ for $k\in \{0,1\}$ and $0<q<1$. Suppose $d$ is an even integer and $\X$ is a random vector whose entries are independent Bernoulli random variables with parameter $q$. Each $X_i$ satisfies
$\E [X_i^k]=\sum_{j\in \{0,1\}} j^k\P(X_i=j)=q$ for $k \in \N$.
We have
$M(t) = 1-q + qe^t$ and
$K(t) = \log(1-q+qe^t)$.
Moreover, the first few cumulants are 
\begin{equation*}
	\kappa_1=q, ~~~ \kappa_2=q-q^2, ~~~ \kappa_3=q-3q^2+2q^3, ~~~ \kappa_4=q-7q^2+12q^3-6q^4,\ldots
\end{equation*}
For even $d\geq 2$, the multinomial theorem and independence imply that
\begin{equation*}
	\norm{A}_{\X,d}^d
	=\sum_{ k_1+k_2+\cdots+k_n=d}  \binom{d}{k_1,k_2,\ldots,k_n} q^{|I|} \lambda_1^{k_1}\lambda_2^{k_2}\cdots \lambda_n^{k_n},
\end{equation*} 
in which $k_i\geq 0$ and $I=\{i: k_i \neq 0\}$. This can written as
\begin{equation*}
	\norm{A}_{\X,d}^d=\sum_{\bvec{\pi}\, \vdash \, d} |\bvec{\pi}|! \,q^{| \bvec{\pi}|} m_{\bvec{\pi}}(\bvec{\lambda}),
\end{equation*} 
in which $m_{\bvec{\pi}}$ denotes the \emph{monomial symmetric polynomial} 
corresponding to the partition $\bvec{\pi}$ of $d$ \cite[Sec.~7.3]{StanleyBook2}.
To be more specific,
\begin{equation*}
	m_{\bvec{\pi}}(\bvec{x})=\sum_{\smash{\bvec{\alpha}}} x^{\bvec{\alpha}},\vspace{-4pt}
\end{equation*} 
in which the sum is taken over all distinct permutations $\bvec{\alpha}=(\alpha_1, \alpha_2, \ldots, \alpha_r)$ of the entries of $\bvec{\pi}=(i_1, i_2, \ldots, i_r)$ and $x^{\bvec{\alpha}}=x_1^{\alpha_1}x_2^{\alpha_2}\cdots x_r^{\alpha_r}$. For example,
$m_{(1)} =\sum_i x_i$,
$m_{(2)} =\sum_i x_i^2$, and
$m_{(1,1)}=\sum_{i<j}x_ix_j$. 
Figure \ref{Figure:Bernoulli} illustrates the unit balls for these norms in two cases. We remark that $\norm{A}_{\X,d}$ approaches $|\tr A|$ as $q\to 1$, which is not a norm.

\begin{figure}
	\centering
	\includegraphics[width = 0.475\textwidth]{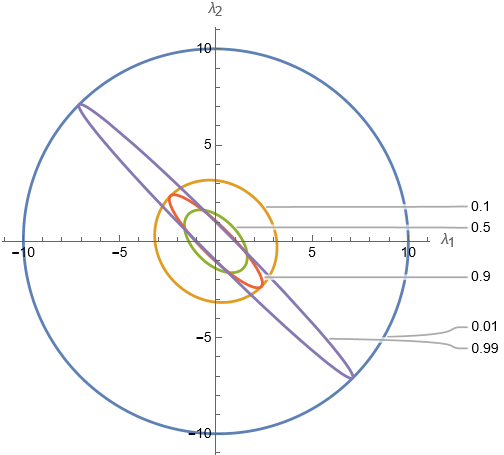}
	\hfill
	\includegraphics[width = 0.475\textwidth]{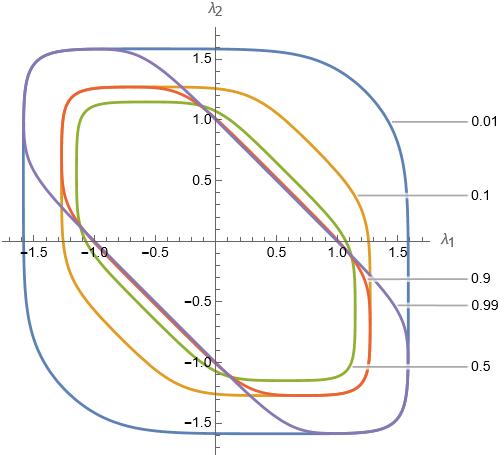}
	\caption{Unit circles for $\|\cdot\|_{\X,d}$, in which $X_1$ and $X_2$ are Bernoulli with varying parameter $q$ and with $d=2$ (\textsc{Left}) and $d=10$ (\textsc{Right}).}
	\label{Figure:Bernoulli}
\end{figure}

\subsection{Rademacher random variables}
\label{Subsec:Rademacher}

Let $\X=(X_1, X_2, \ldots, X_n)$, in which the $X_i$ are independent \emph{Rademacher} random variables defined on $\{-1,+1\}$ by $\P(X_i=1)=\P(X_i=-1)=\frac{1}{2}$. In that case, identity 
\eqref{eq:FiniteDiscrete} yields $M(t)=\cosh t$, so
$M_{\Lambda}(t) = \prod_{i=1}^n\cosh(\lambda_it)$.
Thus, for $n=2$, we have
$\norm{A}_{\X,2}^2  =  \lambda_1^2+\lambda_2^2$,
$\norm{A}_{\X,4}^4  = \lambda_1^4+6 \lambda_1^2 \lambda_2^2+\lambda_2^4$, and
\begin{equation}\label{Eq:Rademacher}
	\norm{A}_{\X,6}^6  = \lambda_1^6+15 \lambda_1^4 \lambda_2^2+15 \lambda_1^2 \lambda_2^4+\lambda_2^6. 
\end{equation}
Let $\gamma_d=\sqrt{2} (\sqrt{\pi})^{-1/d }\Gamma(\frac{d+1}{2})^{1/d}$ 
denote the $d$th moment of a standard normal random variable. The classic Khintchine inequality asserts that 
\begin{equation}
	\bigg(\E \Big|\sum_{i=1}^n \lambda_iX_i \Big|^2\bigg)^{\!\!1/2} \!\!\leq \bigg(\E \Big|\sum_{i=1}^n \lambda_iX_i\Big|^d\bigg)^{\!\!1/d}\!\!\leq \gamma_d\bigg(\E \Big|\sum_{i=1}^n \lambda_iX_i\Big|^2\bigg)^{\!\!1/2}
\end{equation} 
for all $\lambda_1, \lambda_2, \ldots, \lambda_n\in \R$. Moreover, these constants are optimal \cite{Haagerup}. Immediately, we obtain the equivalence of norms
\begin{equation}\label{eq:NormEquivalence}
	\norm{A}_{\mathrm{F}}\leq \norm{A}_{\X,d}\leq \gamma_d\norm{A}_{\mathrm{F}},
\end{equation} 
for all $A\in \H_n(\C)$ and $d\geq 2$ in the case of Rademacher random variables.


\subsection{\texorpdfstring{$\bvec{\alpha}$}{α}-stable random variables}\label{Subsection:a-stable}

A distribution is said to be \emph{stable} if a linear combination, involving strictly positive coefficients, of two independent random variables following this distribution retains the same distribution, up to location and scale parameters. Stable distributions are characterized by four parameters: the stability parameter $\alpha \in (0, 2]$, the skewness parameter $\beta \in [-1, 1]$, the scale parameter $\gamma \in (0, \infty)$, and the location parameter $\delta \in \R$. When $\beta = \delta = 0$, the resulting distribution is called a \emph{symmetric $\alpha$-stable distribution}, denoted by $\vec{S}(\alpha, \gamma)$, and its characteristic function (defined by \eqref{eq:characteristic}) is
\begin{equation*}
\varphi(x)=\exp\!\big(\!-|\gamma x|^{\alpha}\big).
\end{equation*}
The case $\alpha = 2$ represent the normal distribution, while $\alpha = 1$ yield the Cauchy distribution. These are in fact the only instances of a symmetric $\alpha$-stable distribution where there exists a closed form for the probability density function \cite[p.~2]{Nolan}.

These distributions are useful in the context of random vector norms, as they allow us to easily compute the distribution of the random variable $Y:= \langle \X,\bvec{\lambda} \rangle$. Indeed, according to \cite[Prop.~1.4]{Nolan}, if $X$ follows a symmetric $\alpha$-stable distribution with a scale parameter $\gamma$, then 
\begin{equation*}
\lambda X \sim \vec{S}(\alpha,|\lambda|\gamma).
\end{equation*}
Moreover, if $X_1$ and $X_2$ follow symmetric $\alpha$-stable distributions with scale parameters $\gamma_1$ and $\gamma_2$, respectively, their sum $X_1+X_2$ follows the distribution $\vec{S}(\alpha,\gamma)$, where $\gamma=(\gamma_1^\alpha+\gamma_2^\alpha)^{1/\alpha}$.

In the following, suppose that $\alpha\in (1,2)$ and $\gamma>0$. If $X$ follows the distribution $\vec{S}(\alpha,\gamma)$ and $\lambda_j$ are the eigenvalues of the Hermitian matrix $A$, then
\begin{equation*}
Y= \langle \X,\bvec{\lambda} \rangle \sim \vec{S}(\alpha,\gamma \|A\|_{S_\alpha}).
\end{equation*} 
If $\varphi$ is the characteristic function of $Y$ and $d\in [1,\alpha)$, then \eqref{eq:absolutemoment} ensures that
\begin{align*}
	\E\big[|\langle \X,\bvec{\lambda} \rangle |^d\big] &= \frac{2 \Gamma(d+1) \sin\!\big( \tfrac{d\pi}{2} \big)}{\pi} \int_{0}^{\infty}\frac{1-\exp\!\big(\!-\gamma^\alpha \|A\|_{S_\alpha}^{\alpha}t^{\alpha}\big)}{t^{d+1}}\mathrm{d}t \\
	&= \frac{2 \gamma^d \sin\!\big( \tfrac{d\pi}{2} \big) \Gamma(d+1)}{\alpha\sin\!\big(\tfrac{ d\pi}{\alpha}\big)\Gamma\big(\tfrac{d}{\alpha}+1\big)} \|A\|_{S_\alpha}^d.
\end{align*}
Consequently,
\vspace{-6pt}
\begin{equation}\label{Eq:alpha}
	\|A\|_{\X,d} = \gamma \!\left( \frac{2 \sin\!\big( \tfrac{d\pi}{2} \big) \Gamma(d+1)}{\alpha\sin\!\big(\tfrac{ d\pi}{\alpha}\big)\Gamma\big(\tfrac{d}{\alpha}+1\big)} \right)^{\!\!\frac{1}{d}}\! \|A\|_{S_\alpha} \quad \text{for $A\in \H_n$.}
\end{equation}


\subsection{Normal random variables}\label{Subsection:Normal}

Suppose that $\X=(X_1, X_2, \ldots, X_n)$ is a random vector whose entries are independent normal random variables with mean $\mu$ and variance $\sigma^2>0$.  Then
$M(t)=\exp(t\mu+\frac{\sigma^2t^2}{2} )$ and
$K(t) = \frac{\sigma^2 t^2}{2}+\mu  t$;
in particular, $\kappa_1 = \mu$ and $\kappa_2 = \sigma^2$, and all higher cumulants are zero.    
Then
\begin{equation*}
	M_{\Lambda}(t) 
	= \prod_{i=1}^n \exp\!\Big(\lambda_i t\mu+\frac{\sigma^2 \lambda_i^2 t^2}{2} \Big)
	=\exp\! \Big( t\mu\tr A+\frac{\sigma^2\tr(A^2)t^2}{2} \Big).
\end{equation*}
\begin{figure}
	\centering
	\includegraphics[width = 0.475\textwidth]{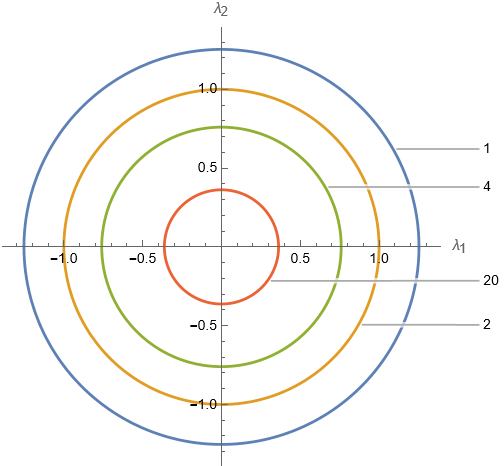}
	\hfill\includegraphics[width = 0.475\textwidth]{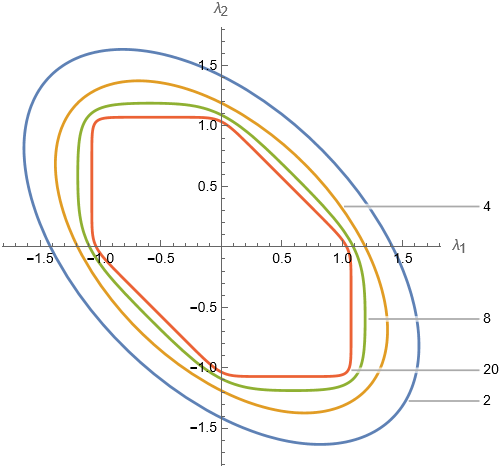}
	\caption{(\textsc{Left}) Unit circles for $\|\cdot\|_{\X,d}$ with $d=1, 2, 4, 20$, in which $X_1$ and $X_2$ are standard normal random variables. (\textsc{Right}) Unit circles for $\norm{\cdot}_{\X,d}$ with $d=2, 4,8, 20$, in which $X_1$ and $X_2$ are Bernoulli with $q=0.5$.}
	\label{Figure:Bernoulli2}
\end{figure}
For even $d\geq 2$, Theorem \ref{Theorem:Main2}.(f) yields
\begin{align*}
	\cnorm{Z}_{\X,2}^2
	&= \sigma^2 \|Z\|_{\operatorname{F}}^2 + \mu^2 |\tr Z|^2 , \\[3pt]
	\cnorm{Z}_{\X,4}^4
	&= 
	\mu^4 (\tr Z)^2 (\tr Z^*)^2
	+\mu^2 \sigma^2 \tr (Z^*)^2\tr (Z^2) +\mu^2 \sigma^2 (\tr Z)^2 \tr (Z^{*2}) \\
	&~~+4 \mu^2 \sigma^2 (\tr Z)  (\tr Z^*) (\tr Z^* Z) +2 \sigma^4 (\tr Z^* Z)^2 
	+\sigma^4 \tr (Z^2) \tr (Z^{*2}).
\end{align*}
Note that since $\kappa_r=0$ for $r \geq 3$,
$\cnorm{Z}_{\X,4}^4$ does not contain summands like
$\tr(Z^*)\tr (Z^* Z^2)$ and $\tr (Z^{*2} Z^2)$, in contrast to the formula in Example \ref{Example:Gamma4}. In the case where $A$ is Hermitian and $d\geq 2$ is even, Theorem \ref{Theorem:Main2}.(e) and the above tell us that for even $d\geq2$,
\begin{equation}\label{eq:Same}
	\norm{A}_{\X,d}^d= d! \sum_{k=0}^{\frac{d}{2}}  \frac{\mu^{2k} (\tr A)^{2k}}{(2k)!} 
	\cdot  \frac{\sigma^{d-2k} \norm{A}_{\operatorname{F}}^{d-2k}}{2^{\frac{d}{2}-k} (\frac{d}{2}-k)!}
	\quad \text{for $A \in \H_n$},
\end{equation}      
in which $\norm{A}_{\operatorname{F}}$ is the Frobenius norm of $A$. 
This formula is convenient for computational purpose. However, since the normal distribution is an $\alpha$-stable distribution (corresponding to $\alpha=2$), the argument presented in Subsection \ref{Subsection:a-stable} allows one to show that 
\begin{equation*}
	Y= \langle \X,\bvec{\lambda} \rangle \sim \vec{N}\big(\mu\tr A, \sigma^2 \|A\|_{\operatorname{F}}^2\big),
\end{equation*}
where the $X_i$ follows the normal distribution $\vec{N}(\mu,\sigma^2)$. Moreover, Winkelbauer \cite{winkelbauer2014moments} provides a formula for the $d$th absolute moments of the normal distribution:
\begin{equation*}
	\E[|X|^d] = \frac{(\sqrt{2}\sigma)^d \Gamma(\frac{d+1}{2})}{\sqrt{\pi}} \Kum\!\Big(\!-\tfrac{d}{2};\,\tfrac{1}{2};\, -\tfrac{\mu^2}{2\sigma^2} \Big),
\end{equation*}
where $\Kum(a;b;z)$ is Kummer's confluent hypergeometric function \eqref{eq:Kummer}. It immediately follows that 
\begin{equation*}
	\|A\|_{\X,d} = \sqrt{2}\sigma \|A\|_{\operatorname{F}} \bigg( \tfrac{ 1}{\sqrt{\pi}} \Gamma(\tfrac{d+1}{2}) \Kum\!\Big(\!-\tfrac{d}{2};\,\tfrac{1}{2};\, -\tfrac{\mu^2 (\tr A)^2}{2\sigma^2 \|A\|_{\operatorname{F}}^2} \Big) \! \bigg)^{\!\!1/d} 	\quad \text{for $A \in \H_n$}.
\end{equation*}
Note that if $\mu=0$, or if $\tr A=0$, then \eqref{Eq:Kummer0} ensures that the random vector norm above reduces to $$\|A\|_{\X,d} = \sqrt{2}\sigma \Big(\tfrac{ 1}{\sqrt{\pi}} \Gamma(\tfrac{d+1}{2}) \Big)^{\!1/d} \! \|A\|_{\operatorname{F}} ,$$ 
a scalar multiple of the Frobenius norm (this agrees with \eqref{Eq:alpha}). It is no surprise that the unit circles when $n=2$ and $\mu=0$ in Figure \ref{Figure:Bernoulli2} (\textsc{Left}) are simple circles. If $\mu\neq 0$, then the unit circles for $\norm{\cdot}_{\X,d}$ are approximately elliptical;
see Figure \ref{Figure:Normal}.

\begin{figure}
	\centering
	\includegraphics[width = 0.475\textwidth]{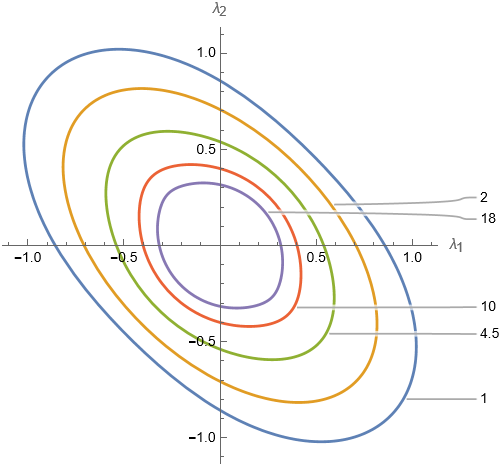}\hfill 
	\includegraphics[width = 0.475\textwidth]{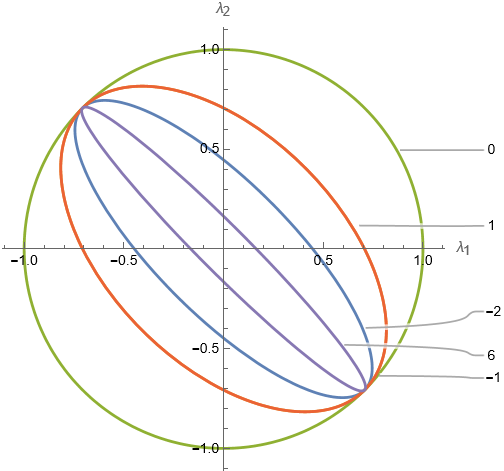}
	\caption{(\textsc{Left}) Unit circles for $\|\cdot\|_{\X,d}$ with $d=1, 2, 4.5, 10, 18$, in which $X_1$ and $X_2$ are normal random variables with $\mu=\sigma=1$. (\textsc{Right}) Unit circles for $\|\cdot\|_{\X,10}$, in which $X_1$ and $X_2$ are normal random variables with means $\mu=-2, -1, 0, 1, 6$ and variance $\sigma^2=1$.}
	\label{Figure:Normal}
\end{figure}

\subsection{Pareto random variables}\label{Subsection:Pareto}
Let $\alpha, x_m>0$. A random variable $X$ is a \emph{Pareto} random variable with parameters $\alpha$ and $x_m$ if it is distributed according to the probability density function
\begin{align*}
	f_X(t)=
	\begin{cases}
		\dfrac{\alpha x_m^{\alpha}}{t^{\alpha+1}} & \text{if $t\geq x_m$},\\ 
		0 & \text{if $t< x_m$}.
	\end{cases}
\end{align*} 
The moments that exist are 
$\mu_k = \frac{\alpha}{\alpha - k}$ for $k < \alpha$.
For even $d\geq2$ with $d < \alpha$, the multinomial theorem and independence of the random variables yield
\begin{align*}
	\norm{A}_{\X ,d}^d 
	&= \E[ \langle \X , \bvec{\lambda}\rangle^d ] 
	= \E\big[(\lambda_1 X_1 + \lambda_2 X_2 +  \cdots + \lambda_n X_n)^d \big] \\
	&= \E \bigg[ \sum_{ k_1+k_2+\cdots+k_n=d} 
	\binom{d}{k_1,k_2,\ldots,k_n} \prod_{i=1}^n (\lambda_i X_i)^{k_i} \bigg] \\
	&=  \sum_{ k_1+k_2+\cdots+k_n=d} 
	\binom{d}{k_1,k_2,\ldots,k_n} \prod_{i=1}^n \lambda_i^{k_i} \E\big[X_i^{k_i}\big] \\
	&=  \sum_{ k_1+k_2+\cdots+k_n=d} 
	\binom{d}{k_1,k_2,\ldots,k_n} \prod_{i=1}^n \frac{\alpha \lambda_i^{k_i}}{\alpha - k_i} ,
\end{align*}
where each $k_i \geq 0$. 
In particular,
$\lim_{\alpha \to\infty}\norm{A}_{\X_{\alpha},d}^d = (\tr A)^d$
and, since every term in the sum with $k_i<d$ for all $i=1,2,\dots,n$ vanishes in the limit, we have
\vspace{-10pt}%
\begin{align*}
	\lim_{\alpha \to d^{+}}
	(\alpha-d)\norm{A}_{\X_{\alpha},d}^d
	&= \lim_{\alpha \to d^{+}} (\alpha-d)\!\! \sum_{ k_1+k_2+\cdots+k_n=d} 
	\binom{d}{k_1,k_2,\ldots,k_n} \prod_{i=1}^n \frac{\alpha \lambda_i^{k_i}}{\alpha - k_i}  \\
	&= \lim_{\alpha \to d^{+}}(\alpha-d)
	\sum_{i=1}^n \binom{d}{d}\frac{d\lambda_i^{d}}{\alpha-d} = d \sum_{i=1}^n \lambda_i^{d}
	= d\norm{A}_{S_d}^d,
\end{align*}
where $\norm{A}_{S_d}$ is the Schatten $d$-norm on $\H_n$.

\begin{example}
	For $n=2$, $\alpha>2$, and $x_m=1$, 
	\vspace{-5pt}%
	\begin{align*}
		\norm{A}_{\X ,2}^2 &=\alpha\!\bigg(\frac{\lambda_1^2}{\alpha-2}+\frac{2\alpha\lambda_1\lambda_2}{(\alpha-1)^2}+\frac{\lambda_2^2}{\alpha-2} \bigg)\!, \quad \text{and}\\
		\norm{A}_{\X ,4}^4 &= \alpha\! \bigg(\frac{\lambda_1^4}{\alpha-4} + \frac{4\alpha\lambda_1^3\lambda_2}{\alpha^2-4\alpha+3} + \frac{6\alpha\lambda_2^2\lambda_1^2}{(\alpha-2)^2} + \frac{4\alpha\lambda_1\lambda_2^3}{\alpha^2-4\alpha+3} +
		\frac{\lambda_2^4}{\alpha-4}\bigg)\!.
	\end{align*}
	Figure \ref{Figure:Pareto124} (\textsc{Left}) illustrates the unit circles for $\norm{\cdot}_{\X,2}$ with varying $\alpha$; 
	as $\alpha\to \infty$ the unit circles approach the parallel lines at $y=\pm 1-x$. 
	Figure \ref{Figure:Pareto124} (\textsc{Right}) depicts the unit circles for $\norm{\cdot}_{\X,d}$ with fixed $\alpha$ and varying $d$. 
\end{example}

\begin{figure}
	\centering
	\includegraphics[width = 0.45\textwidth]{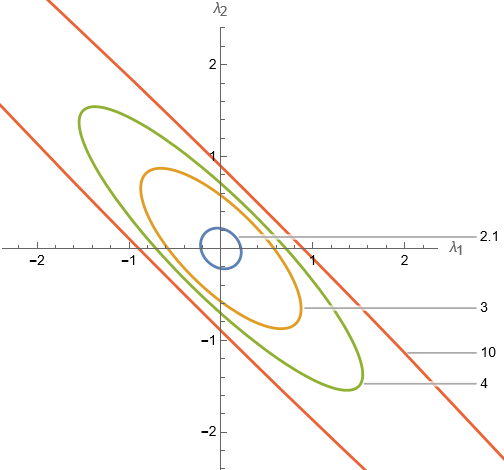}
	\hfill
	\includegraphics[width = 0.45\textwidth]{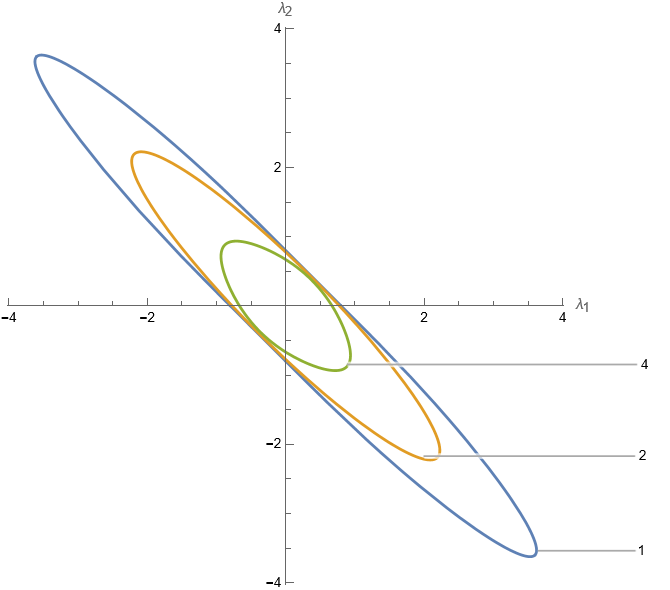}
	\caption{(\textsc{Left}) Unit circles for $\norm{\cdot}_{\X,2}$, in which $X_1$ and $X_2$ are independent Pareto random variables with $\alpha=2.1, 3, 4, 10$ and $x_m=1$. (\textsc{Right}) Unit circles for $\norm{\cdot}_{\X,d}$, in which $X_1$ and $X_2$ are independent Pareto random variables with $\alpha=5$ and $d=1, 2, 4$. }
	\label{Figure:Pareto124}
\end{figure}


\subsection{Laplace random variables}\label{Subsection:Laplace}
Let $\X=(X_1, X_2, \ldots, X_n)$, where 
the $X_i$ are independent Laplace random variables distributed according to the probability density function
$f(x)=\frac{1}{2\beta} e^{-|x-\mu|/\beta}$,
where $\mu \in \R$ and $\beta>0$.
The moment and cumulant generating functions of the $X_i$ are
$M(t)=\frac{e^{\mu t}}{1-\beta^2t^2}$ and
$K(t) = \mu  t-\log (1-\beta^2 t^2)$,
respectively.
The cumulants are
\vspace{-6pt}
\begin{equation*}
	\kappa_r = 
	\begin{cases}
		\mu & \text{if $r=1$},\\
		2 \beta^r (r-1)! & \text{if $r$ is even},\\
		0 & \text{otherwise}.
	\end{cases}
\end{equation*}
For even $d\geq 2$, it follows that $\norm{A}_{\X,d}^d$ is the $d$th term in the Taylor expansion of 
\begin{equation}\label{eq:Laplace}
	\norm{A}_{\X,d}^d 
	= [t^d] \prod_{i=1}^n\frac{ d! e^{\mu \lambda_i t}}{1-\beta^2\lambda_i^2t^2}
	=  e^{\mu t \tr A } [t^d]\prod_{i=1}^n\frac{ d!}{1-\beta^2\lambda_i^2t^2}.
\end{equation} 

\begin{example}
	Let $\mu=\beta=1$ and let $d\geq 2$ be an even integer. Expanding the terms in \eqref{eq:Laplace} gives
	\begin{equation*}
		M_{\Lambda}(t)=e^{ t\tr A }\prod_{i=1}^n\frac{ 1}{1-\lambda_i^2t^2}=\Big(\sum_{k=0}^{\infty} (\tr A)^k\frac{t^k}{k!}  \Big)\Big(\sum_{k=0}^{\infty} h_k(\lambda_1^2, \lambda_2^2, \ldots, \lambda_n^2)t^{2k}    \Big),
	\end{equation*}
	which implies that
	$
	\norm{A}_{\X,d}^d= d!\sum_{k=0}^{d/2} \frac{(\tr A)^{2k}}{(2k)!} h_{\frac{d}{2}-k}(\lambda_1^2, \lambda_2^2, \ldots, \lambda_n^2)
	$.
\end{example}

\subsection{Uniform random variables}\label{Subsection:Uniform}
Let $\X=(X_1, X_2, \ldots, X_n)$, where the $X_i$ are independent and uniformly distributed on $[a,b]$. 
Each $X_i$  has probability density $f(x)=(b-a)^{-1}\bvec{1}_{[a,b]}$, where $\bvec{1}_{[a,b]}$ is the indicator function of $[a,b]$. Then 
\begin{equation}\label{eq:MomentUniform}
	\mu_k = \E  [X_i^k] = \int_{-\infty}^{\infty} x^k f(x)\,\mathrm{d}x= \frac{h_k(a,b)}{k+1},
\end{equation} 
in which $h_k(a,b)$ is the CHS polynomial of degree $k$ in the variables $a,b$. The moment and cumulant generating functions of each $X_i$ are
$M(t)=\frac{e^{bt}-e^{at}}{t(b-a)}$ and
$K(t) = \smash{\log (\frac{e^{t (b-a)}-1}{t (b-a)})+a t}$.
The cumulants are
\begin{equation*}
	\kappa_r =  
	\begin{cases}
		\frac{a+b}{2} & \text{if $r=1$},\\[2pt]
		\frac{B_r}{r}(b-a)^r & \text{if $r$ is even},\\[2pt]
		0 & \text{otherwise},
	\end{cases}
\end{equation*}
in which $B_r$ is the $r$th Bernoulli number \cite{Gould}. 
Theorem \ref{Theorem:Main2}.(e) ensures that 
\begin{equation}\label{eq:GeneralSinh}
	\norm{A}_{\X,d}^d = d! \cdot [t^d]\prod_{i=1}^n\frac{ e^{b\lambda_i t}-e^{a\lambda_i t}}{\lambda_it(b-a)}
	\quad \text{for $A \in \H_n$}.
\end{equation}

\begin{example}
	If $[a,b]=[0,1]$, then
	$M_{\Lambda}(t) = \prod_{i=1}^n \frac{e^{\lambda_i t}-1}{\lambda_i t}$,
	and hence for $A \in \H_n$,
	\begin{align*}
		\norm{A}_{\X,2}^2  &= \tfrac{1}{6}(2 \lambda_1^2+3 \lambda_1 \lambda_2+2 \lambda_2^2), \\
		\norm{A}_{\X,4}^4  &= \tfrac{1}{30}( 6 \lambda_1^4 +15 \lambda_1^3 \lambda_2 +20 \lambda_1^2 \lambda_2^2 +15 \lambda_1 \lambda_2^3+6 \lambda_2^4  
		).
	\end{align*}
	Unlike \eqref{Eq:Rademacher}, these symmetric polynomials are not obviously positive definite
	since $\lambda_1^3 \lambda_2$ and $\lambda_1 \lambda_2^3$ need not be nonnegative.
\end{example}

\begin{example}\label{Example:UniformSymmetric}
	If $[a,b]=[-1,1]$, then 
	\begin{equation*}
		\cnorm{Z}_{\X,4}^4 = \tfrac{1}{45}\big(
		10 (\tr Z^*Z)^2 \!+\! 5 \tr (Z^2)\! \tr (Z^{*2}) \!-\! 4 (\tr Z^2Z^{*2}) \!-\! 2 \tr (ZZ^*ZZ^*)\big)
	\end{equation*}
	for $Z \in \M_n$, which is not obviously positive, let alone a norm.  Similarly,
	\begin{equation*}
		\norm{A}_{\X,6}^6
		= \tfrac{1}{63}
		\big(35 (\tr A^2)^3-42 \tr(A^4) \tr (A^2)+16 \tr(A^6) \big)
		\quad \text{for $A \in \H_6$}
	\end{equation*}
	has a nonpositive summand.
	Since
	$
	M_{\Lambda}(t)= \prod_{i=1}^n\frac{\sinh(\lambda_it)}{\lambda_i t} 
	$
	is an even function of each $\lambda_i$, the corresponding norms are polynomials
	in even powers of the eigenvalues (so positive definiteness is no surprise, although the triangle inequality 
	is nontrivial). Figure \ref{Figure:Exponentials} (\textsc{Right}) shows the unit circles for this norm when $n=2$ for varying $d$.
\end{example}


\subsection{Gamma random variables}\label{Subsection:Gamma}
Let $\X=(X_1, X_2, \ldots, X_n)$, in which the $X_i$ are independent Gamma random variables with probability density
\begin{equation}\label{eq:Gamma}
	f(t)=\begin{cases}
		\frac{1}{\beta^{\alpha} \Gamma(\alpha)} t^{\alpha - 1} e^{-t/\beta} & \text{if $t> 0$},\\[2pt] 
		0 & \text{if $t\leq 0$}.
	\end{cases}
\end{equation}  
Here $\alpha, \beta>0$ (note that $\alpha = k/2$ and $\beta = 2$ yield a $\chi^2$
random variable with $k$ degrees of freedom, and $\alpha=\beta=1$ is the standard exponential distribution). Then
$M(t) = (1 - \beta t)^{-\alpha}$ and
$K(t) = - \alpha \log(1 - \beta t)$,
so 
\begin{equation}\label{eq:KappaGamma}
	\kappa_r = \alpha \beta^r(r-1)! \quad \text{for $r \in \N$}.
\end{equation}
For even $d\geq 2$,
\begin{equation}\label{eq:GammaMGF}
	\norm{A}_{\X,d}^d 
	= [t^d] \prod_{i=1}^n \frac{d!}{(1-\beta \lambda_i t)^{\alpha}}
	= d! \cdot [t^d] \Bigg(\frac{\beta^{-n}t^{-n}}{ p_A(\beta^{-1}t^{-1})}\Bigg)^{\!\!\alpha}
	\quad \text{for $A \in \H_n$},
\end{equation} 
in which $p_A(t) = \det(tI-A)$ denotes the characteristic polynomial of $A$.

\begin{example}\label{Example:Gamma4}
	Since $\kappa_1 = \alpha \beta$ and $\kappa_2 = \alpha \beta^2$, \eqref{eq:d=2} becomes
	$$\cnorm{Z}_{\X,2}^2
	=  \alpha  \beta^2 \|Z\|_{\operatorname{F}}^2 +  \alpha^2 \beta^2 |\tr Z|^2
	\quad \text{for $Z\in \M_n.$}$$
	Similarly, \eqref{eq:Z44} yields
	\begin{align*}
		\cnorm{Z}_{\X,4}^4
		=\,  
		&\alpha^4 \beta^4 (\tr Z)^2 (\tr Z^*)^2
		+\alpha^3 \beta^4 (\tr Z^*)^2 \tr(Z^2) 
		+2 \alpha^2 \beta^4 (\tr Z^* Z)^2
		\\
		&+4 \alpha^3 \beta^4 (\tr Z)(\tr Z^*)(\tr Z^* Z) +\alpha^3 \beta^4 (\tr Z)^2 \tr(Z^{*2})
		 \\
		& +\alpha^2 \beta^4 \tr (Z^2) \tr (Z^{*2}) +4 \alpha^2 \beta^4 \tr(Z^*)\tr (Z^* Z^2)
		\\
		&
		+4 \alpha^2 \beta^4 \tr (Z) \tr (Z^{*2} Z) +2 \alpha  \beta^4 \tr (Z^* Z Z^* Z)
		+4 \alpha  \beta^4 \tr (Z^{*2} Z^2)
		.
	\end{align*}
\end{example}

The case $\alpha=\beta=1$, which corresponds to standard exponential random variables, is of particular interest. Indeed, for even $d\geq 2$, \eqref{eq:GammaMGF} reduces to
\begin{equation}\label{Eq:CHS_noproof}
	\norm{A}_{\X,d}^d = d!\, h_d(\lambda_1, \lambda_2, \ldots, \lambda_n)= d!\!\sum_{1\leq k_1\leq\cdots\leq k_d\leq n} \lambda_{k_1}\lambda_{k_2}\cdots \lambda_{k_d},
\end{equation}
a multiple of the \emph{complete homogeneous symmetric polynomial} of degree $d$ in the eigenvalues $\lambda_1, \lambda_2, \ldots, \lambda_n$ of the Hermitian matrix $A$. These norms, which have been studied in detail in \cite{Aguilar}, are the main object of study in Section \ref{Sec:CMS}. Moreover, the unit balls for this special norm are illustrated in Figure \ref{Figure:Exponentials} (\textsc{Left}) below.

\begin{figure}
	\centering
	\includegraphics[width = 0.475\textwidth]{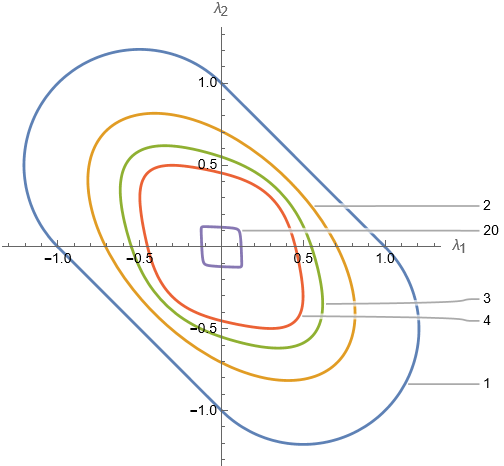}
	\hfill\includegraphics[width = 0.475\textwidth]{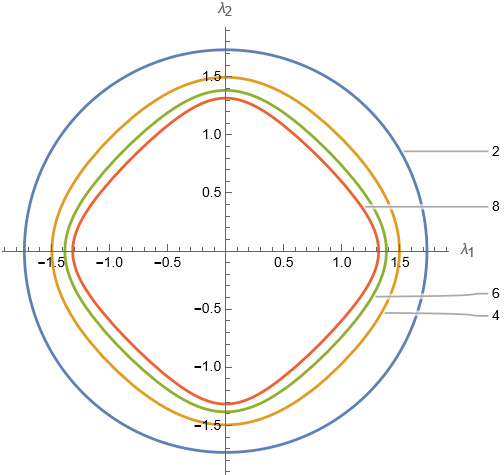}
	\caption{(\textsc{Left}) Unit circles for $\|\cdot\|_{\X,d}$ with $d=1, 2, 3, 4, 20$, in which $X_1$ and $X_2$ are Gamma random variables with $\alpha=\beta=1$. (\textsc{Right}) Unit circles for $\norm{\cdot}_{\X,d}$ with $d=2, 4, 6,8$, in which $X_1$ and $X_2$ are Uniform random variables on $[-1,1]$.}
	\label{Figure:Exponentials}
\end{figure}

\section{Lower and Upper bounds}
\label{Section:LowerandUpperBounds}

In Subsection \ref{Subsec:Rademacher}, it was shown that if $\X=(X_1, X_2, \ldots, X_n)$ is such that the $X_i$ are independent Rademacher random variables, then the Khintchine inequality yields
\begin{equation}\label{eq:Classic}
	\bigg(\E \Big|\sum_{i=1}^n \lambda_iX_i \Big|^2\bigg)^{\!\!1/2} \!\!\leq \bigg(\E \Big|\sum_{i=1}^n \lambda_iX_i\Big|^d\bigg)^{\!\!1/d}\!\!\leq \gamma_d\bigg(\E \Big|\sum_{i=1}^n \lambda_iX_i\Big|^2\bigg)^{\!\!1/2}
\end{equation} 
for all $\lambda_1, \lambda_2, \ldots, \lambda_n\in \R$, where $\gamma_d=\sqrt{2} (\sqrt{\pi})^{-1/d }\Gamma(\frac{d+1}{2})^{1/d}$ 
denote the $d$th moment of a standard normal random variable. 
In general, if $X_1, X_2, \ldots, X_n$ are iid random variables, a comparison of the form \eqref{eq:Classic} is called a \emph{Khintchine-type inequality}. 
Establishing a Khintchine-type inequality here is equivalent to establishing an equivalence between the random vector norms on $\H_n$ of different orders, as in \eqref{eq:NormEquivalence}. Since $\H_n$ is finite dimensional, such inequalities exist.
However, establishing Khintchine-type inequalities with explicit constants is more involved \cite{ENT1, ENT2, Havrilla, LO}.

In this section, we shall seek to establish these types of inequalities in several contexts. More precisely, since our main object of study are the random vector norms, we shall prove lower and upper bounds of different forms for the random vector norms on $\H_n$ and on $\M_n$. Some simple bounds are given in Subsection \ref{subsec:simplebound}, while more involved bounds, which are independent of the dimension $n$ of the matrices, are studied in Subsection \ref{subsec:involvedbound}. We begin by proving this short lemma.

\begin{lemma}\label{lem:generalize}
	Let $d \geq 1$ and $\vec{X} = (X_1,\dots , X_n)$, in which $X_1, \dots , X_n \in L^d(\Omega,\mathcal{F},\vec{P})$ are iid random variables. Let $\|\cdot\|:\M_n\to\R$ be a (submultiplicative) matrix norm, and suppose there exists a constant $k$ (which may depend on $n$ and $d$) such that
	\begin{enumerate}[label=(\roman*)]
		\itemsep.5em
		\item $\|A\|_{\X,d}\leq k \|A\|$ for all $A\in \H_n$; and
		\item $\|Z\|=\|Z^*\|$ for all $Z\in\M_n$.
	\end{enumerate}
	Then $\cnorm{Z}_{\X,d} \leq 2k \binom{d}{d/2}^{\!-1/d} \|Z\|$ for all $Z\in\M_n$.
\end{lemma}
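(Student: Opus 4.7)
The plan is to start from the definition \eqref{eq:NormComplex} of the complexified norm:
\begin{equation*}
\cnorm{Z}_{\vec{X},d}^d = \frac{1}{2\pi \binom{d}{d/2}} \int_0^{2\pi} \norm{e^{it}Z + e^{-it}Z^*}_{\vec{X},d}^d \,\mathrm{d}t .
\end{equation*}
For each $t\in [0,2\pi]$, the matrix $e^{it}Z + e^{-it}Z^*$ is Hermitian, so hypothesis (i) applies to it and yields
\begin{equation*}
\norm{e^{it}Z + e^{-it}Z^*}_{\vec{X},d} \leq k \norm{e^{it}Z + e^{-it}Z^*} .
\end{equation*}

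Next I would estimate the matrix norm on the right using the triangle inequality, absolute homogeneity (recall $|e^{\pm it}|=1$), and hypothesis (ii), obtaining
\begin{equation*}
\norm{e^{it}Z + e^{-it}Z^*} \leq \norm{e^{it}Z} + \norm{e^{-it}Z^*} = \norm{Z} + \norm{Z^*} = 2\norm{Z},
\end{equation*}
uniformly in $t$. Combining the two inequalities gives the pointwise bound $\norm{e^{it}Z + e^{-it}Z^*}_{\vec{X},d} \leq 2k\norm{Z}$.

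Substituting this bound into the integral and evaluating the now constant integrand yields
\begin{equation*}
\cnorm{Z}_{\vec{X},d}^d \leq \frac{1}{2\pi\binom{d}{d/2}} \int_0^{2\pi} (2k\norm{Z})^d \,\mathrm{d}t = \frac{(2k\norm{Z})^d}{\binom{d}{d/2}} ,
\end{equation*}
and taking $d$th roots produces the announced inequality $\cnorm{Z}_{\vec{X},d} \leq 2k\binom{d}{d/2}^{-1/d}\norm{Z}$. The argument is essentially a direct unwinding of the definition, so no real obstacle is expected; the only subtlety worth flagging is that submultiplicativity of $\norm{\cdot}$ is not actually used here (only the triangle inequality, absolute homogeneity, and the $*$-invariance of hypothesis (ii)), which is why the word ``submultiplicative'' appears parenthetically in the statement.
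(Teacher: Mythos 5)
Your proof is correct and is essentially the paper's own argument: apply hypothesis (i) to the Hermitian matrix $e^{it}Z+e^{-it}Z^*$, bound its norm by $2\|Z\|$ via the triangle inequality, homogeneity, and (ii), then integrate and take $d$th roots. Your side remark that submultiplicativity is never used is also consistent with the paper, which only invokes the norm axioms and the $*$-invariance here.
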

\begin{proof}
	According to \eqref{eq:NormComplex},
	\begin{align*}
		2\pi \tbinom{d}{d/2}\cnorm{Z}_{\X,d}^d &= \int_0^{2\pi} \!\big\|e^{it}Z + e^{-it} Z^* \big\|_{\X,d}^d \,\mathrm{d}t \leq k^d \!\int_0^{2\pi} \!\big\|e^{it}Z + e^{-it} Z^* \big\|^d \mathrm{d}t \\
		&\leq k^d \!\int_0^{2\pi} \!\big(\big\|e^{it}Z\big\| + \big\|e^{-it} Z^* \big\| \big)^d \mathrm{d}t \leq k^d \!\int_0^{2\pi} \!\big(\|Z\| + \|Z^* \| \big)^d \mathrm{d}t \\
		&\leq 2^d k^d \!\int_0^{2\pi} \!\|Z\|^d \,\mathrm{d}t = 2\pi \cdot 2^d k^d \|Z\|^d. \tag*{\qedhere}
	\end{align*}
\end{proof}

\subsection{Simple bounds}
\label{subsec:simplebound}

Each random vector norm on $\M_n$ is bounded below by an explicit positive multiple of the trace seminorm. That is, the random vector norms of a matrix can be related to the mean of its eigenvalues (not singular values). The following is a generalization of \cite[Thm.~32]{Aguilar}.

\begin{theorem}\label{Theorem:Trace_ineq}
	Let $d \geq 1$ and $\vec{X} = (X_1,X_2,\ldots , X_n)$, where $X_1,X_2, \ldots , X_n \in L^d(\Omega,\mathcal{F},\vec{P})$ are iid random variables of distribution $X$. Then
	\begin{gather*}
		\cnorm{Z}_{\X,d} \geq \frac{\|I_n\|_{\X,d}}{n } |\tr Z| \quad\mathrm{for}~Z\in \M_n,
	\end{gather*}
	with equality if and only if $Z$ is a multiple of the identity.
\end{theorem}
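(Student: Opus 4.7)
The plan is to handle $A\in\H_n$ first and then bootstrap to $\M_n$ via the complexification integral \eqref{eq:NormComplex}. The Hermitian inequality follows from the averaging identity
\begin{equation*}
\tfrac{\tr A}{n}(X_1+X_2+\cdots+X_n) = \tfrac{1}{n!}\sum_{\sigma\in\S_n}\langle\X,\sigma\vec{\lambda}\rangle, \qquad \vec{\lambda}=\bvec{\lambda}(A),
\end{equation*}
via an application of Minkowski's inequality, since the iid hypothesis forces all $n!$ summands to share the same $L^d$ norm. The main obstacle will be the equality case: pinning down when Minkowski degenerates (using independence and nondegeneracy of $\X$), and then lifting strict inequality from $\H_n$ to $\M_n$ through a real-analyticity argument.

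\textbf{Hermitian step.} By Theorem \ref{Theorem:Main2}.(a), $\|A\|_{\X,d}=\|\langle\X,\vec{\lambda}\rangle\|_{L^d}$ and $\|I_n\|_{\X,d}=\|X_1+\cdots+X_n\|_{L^d}$. Applying Minkowski's inequality to the averaging identity above and using the symmetry $\|\langle\X,\sigma\vec{\lambda}\rangle\|_{L^d}=\|\langle\X,\vec{\lambda}\rangle\|_{L^d}$ (from relabeling iid variables) yields $\tfrac{|\tr A|}{n}\|I_n\|_{\X,d} \leq \|A\|_{\X,d}$. For $d>1$, strict convexity of $t\mapsto|t|^d$ forces Minkowski equality to occur only when all $\langle\X,\sigma\vec{\lambda}\rangle$ are pairwise almost surely equal. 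Rewriting such an identity as $\sum_i(\lambda_{\sigma(i)}-\lambda_{\tau(i)})X_i=0$ almost surely and invoking the independence/nondegeneracy argument from the proof of Proposition \ref{Proposition:Main1} forces $\sigma\vec{\lambda}=\tau\vec{\lambda}$ for every $\sigma,\tau\in\S_n$, so $\vec{\lambda}$ is constant and $A\in\C I_n$.

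\textbf{From $\H_n$ to $\M_n$.} For $Z\in\M_n$ and $t\in[0,2\pi]$, the matrix $A_t=e^{it}Z+e^{-it}Z^*$ is Hermitian with $\tr A_t = 2|\tr Z|\cos(t+\phi)$, where $\phi=\arg\tr Z$. Raising the Hermitian bound applied to $A_t$ to the $d$th power, integrating in $t$, and invoking the normalization $\int_0^{2\pi}|2\cos t|^d\,\mathrm{d}t = 2\pi\tbinom{d}{d/2}$ from the proof of Proposition \ref{p:gennorm}, the definition \eqref{eq:NormComplex} collapses precisely to $\cnorm{Z}_{\X,d}^d \geq (\|I_n\|_{\X,d}/n)^d|\tr Z|^d$, as required. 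For the equality case, if $Z\notin\C I_n$ then the real-analytic family $t\mapsto A_t$ is not identically in $\C I_n$, so the identity theorem makes $\{t:A_t\in\C I_n\}$ discrete; the Hermitian equality analysis together with the continuity of $t\mapsto\|A_t\|_{\X,d}^d - (\|I_n\|_{\X,d}/n)^d|\tr A_t|^d$ then shows that strict inequality holds on an open dense set and thus survives integration. Conversely, $\cnorm{cI_n}_{\X,d}=|c|\|I_n\|_{\X,d}$ and $|\tr(cI_n)|=n|c|$ give equality when $Z=cI_n$.
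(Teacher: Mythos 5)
Your proposal is correct where it applies, and its Hermitian step is genuinely different machinery from the paper's. The paper obtains $\norm{A}_{\X,d}\geq\frac{|\tr A|}{n}\norm{I_n}_{\X,d}$ from Schur-convexity (Theorem \ref{Theorem:Main2}.(d)) applied to the majorization $(\mu,\ldots,\mu)\prec\bvec{\lambda}(A)$ with $\mu=\tr A/n$, whereas you symmetrize, $\frac{\tr A}{n}\sum_i X_i=\frac{1}{n!}\sum_{\sigma\in\S_n}\langle\X,\sigma\bvec{\lambda}\rangle$, and apply Minkowski; the two are close relatives, but your version buys an explicit equality analysis: for $d>1$, Minkowski equality among summands of equal $L^d$ norm forces $\langle\X,\sigma\bvec{\lambda}\rangle=\langle\X,\tau\bvec{\lambda}\rangle$ almost surely, and the independence/nondegeneracy argument of Proposition \ref{Proposition:Main1} then gives $\sigma\bvec{\lambda}=\tau\bvec{\lambda}$ for all $\sigma,\tau$, hence $A\in\R I_n$. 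The paper instead quotes a strict form of Schur-convexity (``equality iff the spectra are permutations'') that Theorem \ref{Theorem:Main2}.(d) does not actually supply. The passage to $\M_n$ is the same in both proofs (integrate the Hermitian bound over $A_t=e^{it}Z+e^{-it}Z^*$ and use $\int_0^{2\pi}|2\cos t|^d\,\mathrm{d}t=2\pi\binom{d}{d/2}$); for the equality case you let strict inequality off a finite exceptional set survive integration, while the paper argues that equality forces $A_t\in\R I_n$ for all $t$ and extracts $Z\in\C I_n$ by Fourier expansion. Your analyticity remark can even be replaced by the elementary observation that, writing $Z=H+iK$ with $H,K$ Hermitian, one has $A_t=2\cos t\,H-2\sin t\,K$, and $A_t\in\R I_n$ iff $\cos t\,H_0=\sin t\,K_0$ for the traceless parts $H_0,K_0$, which fails outside a finite set of $t$ when $Z\notin\C I_n$.

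The one gap is the equality characterization at $d=1$, which your strict-convexity step explicitly excludes although the statement asserts it for all $d\geq1$. Be aware, though, that the assertion itself breaks down at $d=1$: take $X_i$ standard exponential, $n=2$, $Z=\diag(2,0)$; then $\cnorm{Z}_{\X,1}=\norm{Z}_{\X,1}=\E[2X_1]=2$, while $\frac{\norm{I_2}_{\X,1}}{2}\,|\tr Z|=\frac{\E[X_1+X_2]}{2}\cdot 2=2$, so equality holds for a matrix that is not a multiple of the identity (indeed, for $d=1$ and nonnegative $X_i$ the bound is an identity on all positive semidefinite matrices). So the $d=1$ equality claim cannot be rescued, and the paper's own proof slips at exactly the point where it invokes the unproved strict Schur-convexity, which is false at $d=1$. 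In summary: your proof establishes the inequality in the stated generality $d\geq1$ and the equality characterization for $d>1$, which is the most one can hope for; if you want your write-up to match the stated theorem you should either restrict the equality claim to $d>1$ or add a hypothesis ruling out the $d=1$ degeneracy.
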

\begin{proof}
	Suppose $A\in \H_n$. Theorem \ref{Theorem:Main2}.(d) shows that the random vector norms are Schur-convex. That is, $\|A\|_{\X,d} \leq \|B\|_{\X,d}$ if $\bvec{\lambda}(A) \prec \bvec{\lambda}(B)$, with equality if and only if $\bvec{\lambda}(A)$ is a permutation of $\bvec{\lambda}(B)$. 
	For $A \in \H_n$, $\bvec{\lambda}(A)$ majorizes
	$\bvec{\mu} = (\mu, \mu,\ldots, \mu)\in \R^n$, in which 
	$\mu=\tr A/n$. Hence,
	\begin{align*}
		\norm{A}_{\X,d}^d &= \E\big[ |\inner{\X,\bvec{\lambda}}|^d \big] \geq \E\big[ |\langle \X,\bvec{\mu} \rangle|^d \big] \\
		&= |\mu|^d \E\big[ |X_1+X_2+\cdots+X_n|^d \big] \\
		&= |\mu|^d \|I_n\|_{\X,d}^d,
	\end{align*}
	with equality if and only if $A=\mu I_n$. Now, let $Z\in \M_n$ and define the matrix $B(t)=e^{it} Z+e^{-it} Z^* \in \H_n$ for $t \in \R$. 
	Then it follows from \eqref{eq:NormComplex} and the above that
	\begin{align}
		\cnorm{Z}_{\X,d}
		&\geq  \frac{\|I_n\|_{\X,d}}{n}\Bigg(\frac{ 1}{ 2\pi \binom{d}{d/2}} \int_0^{2\pi} |\tr B(t)|^{d}\,\mathrm{d}t\Bigg)^{\!\!1/d}\!. \label{eq:Admu}
	\end{align}
	Combining this and the following, where $\tr Z = |\tr Z| e^{i\phi}$,
	\begin{align*}
		\int_0^{2\pi} |\tr B(t)|^d \,\mathrm{d}t
		&=  \int_0^{2\pi} \big| e^{it} \tr Z + e^{-it} \tr Z^* \big|^d\,\mathrm{d}t \\[-.4pt]
		&=  |\tr Z|^d \int_0^{2\pi} \big| e^{it+i\phi} + e^{-it-i\phi} \big|^d\,\mathrm{d}t \\[-.4pt]
		&=  2^d|\tr Z|^d \int_0^{2\pi}  \big|\sin(t+\phi+\tfrac{\pi}{2})\big|^d\,\mathrm{d}t \\[-.4pt]
		&=  2^d|\tr Z|^d \int_0^{2\pi}  \big|\sin(t)\big|^d\,\mathrm{d}t \\[-.4pt]
		&=  2\cdot 2^d|\tr Z|^d \int_0^{\pi}  \sin^d(t) \,\mathrm{d}t \\[-.4pt]
		&=2\pi {\binom{d}{d/2}} |\tr Z|^d
	\end{align*}
	yield the desired inequality. Note that the fourth identity holds because of the $2\pi$-periodicity of $\sin(t)$, while the last is \cite[Rem.~5.2]{Bouthat2}. 
	Moreover, the continuity of the integrand ensures that equality occurs in \eqref{eq:Admu} if and only if 
	$B(t) = \mu(t) I_n$ for all $t \in \R$.  An operator-valued Fourier expansion of $\mu(t)$ then reveals that
	$e^{it} Z+e^{-it} Z^* = ( \sum_{n \in \Z} \widehat{\mu}(n) e^{int} ) I_n$,
	so $Z = \widehat{\mu}(1) I_n$. Conversely, equality holds in \eqref{eq:Admu} 
	if $Z$ is a multiple of the identity.
\end{proof}

In the following, an upper bound for the random vector norms is established, putting in relation both $\|\cdot\|_{\X,d}$ and $\cnorm{\cdot}_{\X,d}$ with the Schatten norms via a constant dependent on $n,d$ and the $d$th absolute moment of the underlying distribution.

\begin{theorem}
 Let $d \geq 1$ and $\vec{X} = (X_1,\dots , X_n)$, where $X_1, \dots , X_n \in L^d(\Omega,\mathcal{F},\vec{P})$ are iid random variables of distribution $X$. Then
 \vspace{-1pt}
 \begin{align*}
 	\|A\|_{\X,d}^d &\leq n^{d-1}\E\big[|X|^d\big] \|A\|_{S_{d}}^d \quad\mathrm{for}~A\in \H_n, \\[5pt]
 	\cnorm{Z}_{\X,d}^d &\leq \frac{2n^{d-1}\E\big[|X|^d\big]}{\binom{d}{d/2}} \|Z\|_{S_{d}}^d \quad\mathrm{for}~Z\in \M_n.\\[-19pt]
 \end{align*}
\end{theorem}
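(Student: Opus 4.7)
The plan is to handle the two bounds in sequence. The Hermitian inequality will follow from the convexity of $t \mapsto |t|^d$ in a routine way, and the bound on $\M_n$ will then be an immediate consequence via Lemma \ref{lem:generalize} with $\|\cdot\|=\|\cdot\|_{S_d}$.

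For the first inequality, fix $A \in \H_n$ with eigenvalues $\bvec{\lambda} = (\lambda_1, \ldots, \lambda_n)$. The triangle inequality in $\R$ gives $|\langle \X,\bvec{\lambda}\rangle|^d \leq \bigl(\sum_{i=1}^n |\lambda_i||X_i|\bigr)^d$. The power-mean inequality
\begin{equation*}
	\Big(\sum_{i=1}^n a_i\Big)^{\!d} \leq n^{d-1}\sum_{i=1}^n a_i^d \quad (a_i \geq 0,\, d\geq 1),
\end{equation*}
which is Jensen's inequality applied to the convex function $t \mapsto t^d$ (alternatively, a direct consequence of the power-mean hierarchy), then bounds the right-hand side by $n^{d-1}\sum_{i=1}^n|\lambda_i|^d|X_i|^d$. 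Taking expectations and using that the $X_i$ are iid with the common distribution $X$ yields
\begin{equation*}
	\|A\|_{\X,d}^d \leq n^{d-1}\E[|X|^d]\sum_{i=1}^n|\lambda_i|^d = n^{d-1}\E[|X|^d]\|A\|_{S_d}^d,
\end{equation*}
since the singular values of a Hermitian matrix are $|\lambda_1|,\ldots,|\lambda_n|$, so that the Schatten $d$-norm to the $d$-th power coincides with $\sum_i |\lambda_i|^d$.

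For the second inequality, I invoke Lemma \ref{lem:generalize} with the matrix norm $\|\cdot\|=\|\cdot\|_{S_d}$. Condition (ii) of that lemma holds because $Z^*Z$ and $ZZ^*$ share the same spectrum, so $\|Z\|_{S_d}=\|Z^*\|_{S_d}$; condition (i) holds with $k=(n^{d-1}\E[|X|^d])^{1/d}$ by the Hermitian bound just established. The lemma then delivers
\begin{equation*}
	\cnorm{Z}_{\X,d} \leq 2\tbinom{d}{d/2}^{-1/d}\bigl(n^{d-1}\E[|X|^d]\bigr)^{1/d}\|Z\|_{S_d},
\end{equation*}
and raising both sides to the $d$-th power produces the claimed bound.

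There is no serious obstacle in this plan. The only subtlety worth flagging is the arithmetic compatibility between the exponent $d$ appearing in Jensen's inequality and the exponent $d$ in the Schatten norm: both are needed in order for the sum $\sum_i|\lambda_i|^d$ to reassemble into $\|A\|_{S_d}^d$ and close the Hermitian estimate. Once the Hermitian bound is established, the extension to $\M_n$ is purely a matter of verifying the two hypotheses of Lemma \ref{lem:generalize} for $\|\cdot\|_{S_d}$, both of which are immediate from the standard theory of unitarily invariant norms.
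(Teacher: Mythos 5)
Your proof is essentially the same as the paper's: the Hermitian bound via the finite form of Jensen's inequality applied to $t\mapsto t^d$ (the power-mean estimate $(\sum a_i)^d\le n^{d-1}\sum a_i^d$), followed by an appeal to Lemma \ref{lem:generalize} with $\|\cdot\|=\|\cdot\|_{S_d}$ and $k=(n^{d-1}\E[|X|^d])^{1/d}$. The one wrinkle — that raising the lemma's conclusion to the $d$th power yields the factor $2^d/\binom{d}{d/2}$ rather than the stated $2/\binom{d}{d/2}$ — is shared by the paper's own proof, which invokes the same lemma, so your argument is faithful to the source even though the displayed constant does not literally follow.
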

\begin{proof}
  Since $x^d$ is a convex function, the finite form of Jensen's inequality yields
  \vspace{-2pt}
  \begin{equation*}
       |\langle \X,\bvec{\lambda}\rangle |^d \leq n^d \! \left(\frac{|\lambda_1 X_1| + \cdots + |\lambda_n X_n|}{n}\right)^{\!d} \!\leq n^d \frac{|\lambda_1 X_1|^d + \cdots + |\lambda_n X_n|^d}{n}.\vspace{-2pt}
   \end{equation*}
  Therefore,
  \vspace{-5pt}
  \begin{align*}
       \E\big[|\langle \X,\bvec{\lambda}\rangle |^d\big] &\leq  n^{d-1} \!\Big( \E\big[|\lambda_1 X_1|^d\big] + \cdots + \E\big[|\lambda_n X_n|^d\big] \Big) \\
       &= n^{d-1} \!\left( |\lambda_1|^d \E\big[|X_1|^d\big] + \cdots + |\lambda_n|^d \E\big[|X_n|^d\big] \right) \\
       &=\, n^{d-1} \!\left( |\lambda_1|^d  + \cdots + |\lambda_n|^d \right) \E\big[|X|^d\big] \\
       &=\, n^{d-1} \|A\|_{S_d}^d \E\big[|X|^d\big] .\\[-19pt]
   \end{align*}
   The general cases immediately follow from Lemma \ref{lem:generalize}.
\end{proof}

If the distribution of the random variables $X_i$ is symmetric (that is, $\E [X_i] =0$), the Marcinkiewicz--Zygmund inequality \eqref{Eq:M-Z} yields a pair of inequalities on the random vector norms of the form $c(X,d) \|A\|_{\operatorname{F}}$, where $\|\cdot\|_{\operatorname{F}}$ is the Frobenius norm and $c(X,d)$ is some constant independant of $n$. 
The following is \cite[Lem.~1]{Bouthat1}.

\begin{theorem}\label{ineq - main}
	Let $d\geq 2$ and $\X = (X_1, X_2, \ldots , X_n)$, where $X_1,X_2,  \ldots , X_n \in L^d(\Omega,\mathcal{F},\vec{P})$ are iid random variables of distribution $X$ satisfying $\E[X]=0$. Then
	\vspace{-3pt}
	\begin{equation*}
		A_d^{\frac{1}{d}} \E\hspace{-.3pt}\big[|X|^2\big]^{\frac{1}{2}} \|A\|_{\operatorname{F}} \leq \|A\|_{\X,d}  \leq \big(B_d \E\hspace{-.3pt}\big[|X|^d\big]\big)^{\frac{1}{d}}  \|A\|_{\operatorname{F}} ,
	\end{equation*}
	where $A_d$ and $B_d$ are the constants in the Marcinkiewicz--Zygmund inequality which depend only on $d$.
\end{theorem}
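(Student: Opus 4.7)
The plan is to apply the Marcinkiewicz--Zygmund inequality \eqref{Eq:M-Z} to the independent, mean-zero random variables $Y_i = \lambda_i X_i$ (with $\lambda_i$ the eigenvalues of $A$) and then sandwich the resulting expression $\E\bigl[\bigl(\sum_i \lambda_i^2 X_i^2\bigr)^{d/2}\bigr]$ between two multiples of $\|A\|_{\operatorname{F}}^d$ using Jensen's inequality in two different ways.

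First I would note that, since $\langle \X, \boldsymbol{\lambda}\rangle = \sum_i \lambda_i X_i$ is a sum of independent mean-zero $L^d$ random variables, the Marcinkiewicz--Zygmund inequality yields
\begin{equation*}
A_d \, \E\!\left[\Bigl(\textstyle\sum_{i=1}^n \lambda_i^2 X_i^2\Bigr)^{\!d/2}\right] \leq \|A\|_{\X,d}^d \leq B_d \,\E\!\left[\Bigl(\textstyle\sum_{i=1}^n \lambda_i^2 X_i^2\Bigr)^{\!d/2}\right].
\end{equation*}
Call the inner sum $S$. Both inequalities will be controlled by comparing $\E[S^{d/2}]$ to a power of $\|A\|_{\operatorname{F}}^2 = \sum_i \lambda_i^2$.

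For the upper bound, I would renormalize by setting $p_i = \lambda_i^2/\|A\|_{\operatorname{F}}^2$, so that $\sum p_i = 1$ and $S = \|A\|_{\operatorname{F}}^2\sum_i p_i X_i^2$. The finite form of Jensen's inequality applied to the convex function $t \mapsto t^{d/2}$ (valid since $d \geq 2$) gives
\begin{equation*}
\bigl(\textstyle\sum_i p_i X_i^2\bigr)^{d/2} \leq \sum_i p_i |X_i|^d,
\end{equation*}
and taking expectations (using that the $X_i$ are identically distributed) produces
\begin{equation*}
\E[S^{d/2}] \leq \|A\|_{\operatorname{F}}^d \sum_i p_i \E\bigl[|X_i|^d\bigr] = \|A\|_{\operatorname{F}}^d \,\E\bigl[|X|^d\bigr].
\end{equation*}
Combining this with the right-hand Marcinkiewicz--Zygmund estimate and extracting the $d$th root yields the desired upper bound.

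For the lower bound, I would instead apply the probabilistic form of Jensen's inequality to the same convex function $t \mapsto t^{d/2}$, this time in the form $\E[S^{d/2}] \geq (\E[S])^{d/2}$. Since the $X_i$ are iid,
\begin{equation*}
\E[S] = \sum_{i=1}^n \lambda_i^2 \,\E[X_i^2] = \|A\|_{\operatorname{F}}^2 \,\E\bigl[|X|^2\bigr],
\end{equation*}
so $\E[S^{d/2}] \geq \E[|X|^2]^{d/2}\|A\|_{\operatorname{F}}^d$, which combined with the left-hand Marcinkiewicz--Zygmund bound finishes the proof. There is no real obstacle here: the only nontrivial input is Marcinkiewicz--Zygmund itself (stated as \eqref{Eq:M-Z}), and the role of the mean-zero assumption is precisely to make that inequality available. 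The rest is Jensen in its two standard guises, each applied to the same convex power function $t^{d/2}$ but to different underlying measures.
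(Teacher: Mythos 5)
Your proposal is correct and follows essentially the same route as the paper: apply Marcinkiewicz--Zygmund to the mean-zero summands $\lambda_i X_i$, then bound $\E\bigl[\bigl(\sum_i \lambda_i^2 X_i^2\bigr)^{d/2}\bigr]$ from above via the finite form of Jensen's inequality with weights $\lambda_i^2/\|A\|_{\operatorname{F}}^2$ and from below via the probabilistic form of Jensen's inequality applied to $t\mapsto t^{d/2}$. No gaps beyond the trivial $A=0$ case, which the normalization silently excludes.
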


\begin{proof}
	The Marcinkiewicz--Zygmund inequality implies that
	\begin{equation*}
	A_d \E \!\left[ \bigg( \sum_{i=1}^n |\lambda_i X_i|^2 \bigg)^{\!\!d/2} \right] \leq \E\!\left[|\langle \X,\bvec{\lambda}\rangle |^d \right] \leq B_d \E \!\left[ \bigg( \sum_{i=1}^n |\lambda_i X_i|^2 \bigg)^{\!\!d/2} \right]. \vspace{-3pt}
	\end{equation*}
	To establish the desired estimates, it suffices to bound $\E \hspace{-.3pt}\big[ \big( \sum_{i=1}^n |\lambda_i X_i|^2 \big)^{\!d/2} \big]$. To obtain the upper bound, observe that the finite form of Jensen's inequality and the convexity of $x^{\smash{d/2}}$ ensures that
	\begin{align*}
		\E \!\left[ \bigg( \sum_{i=1}^n |\lambda_i X_i|^2 \bigg)^{\!\!d/2} \right] \!&=\! \E \!\left[ \bigg( \frac{\lambda_1^2 |X_1|^2+\cdots+ \lambda_n^2 |X_n|^2}{\lambda_1^2 + \cdots + \lambda_n^2}\bigg)^{\!\!d/2} \right] \!\cdot (\lambda_1^2 + \cdots + \lambda_n^2)^{\frac{d}{2}} \\
		&\leq \E \!\left[ \frac{ \lambda_1^2 |X_1|^d+\cdots+ \lambda_n^2 |X_n|^d}{\lambda_1^2 + \cdots + \lambda_n^2} \right] \!\cdot (\lambda_1^2 + \cdots + \lambda_n^2)^{\frac{d}{2}} \\
		&= \E\hspace{-.3pt}\big[|X|^d\big] \|A\|_{\operatorname{F}}^d\,.
	\end{align*}
	Hence, $\E\hspace{-.3pt}\big[|\langle \X,\bvec{\lambda}\rangle |^d \big] \leq B_d \E\hspace{-.3pt}\big[|X|^d\big] \|A\|_{\operatorname{F}}^d$. 
	To establish the lower bound, simply apply the probabilistic form of Jensen's inequality:
	\vspace{-3pt}
	\begin{align*}
		\E \!\left[ \bigg( \sum_{i=1}^n |\lambda_i X_i|^2 \bigg)^{\!\!d/2} \right] \!\geq\!  \bigg( \sum_{i=1}^n \lambda_i^2 \E|X_i|^2 \bigg)^{\!\!\frac{d}{2}} \!\!=\!  \bigg( \E|X|^2 \sum_{i=1}^n \lambda_i^2 \bigg)^{\!\!\frac{d}{2}} \!\!=  \E\hspace{-.3pt}\big[|X|^2\big]^{\frac{d}{2}} \|A\|_{\operatorname{F}}^d \,,
	\end{align*}
	and the result follows.
\end{proof}

\subsection{Dimension-free bounds}
\label{subsec:involvedbound}

In several situations, it is preferable to obtain dimension-free bounds, that is, where the constants do not rely on the dimension of the matrices (see Section \ref{Section:Submultiplicativity}). In this section, we establish a pair of inequalities between $\cnorm{\cdot}_{\X,d}$ and $\cnorm{\cdot}_{\X,2}$. Recall that the $d$th \emph{standardized absolute moment} of the random variable $X$ is defined as $\Tilde{\mu}_d = \frac{\E[|X-\mu|^d ]}{\sigma^d}$, where $\sigma$ is the standard deviation and $\mu$ is the mean of $X$ (see Subsection \ref{Subsection:Probability}). The following propositions are \cite[Prop.~4]{Bouthat1} and \cite[Prop.~5]{Bouthat1}.

\begin{proposition}\label{prop - main_ineq}
	Let $d\geq 2$ and $\X = (X_1,X_2, \ldots , X_n)$, where $X_1,X_2, \ldots , X_n \in L^d(\Omega,\mathcal{F},\vec{P})$ are iid random variables of $d$th standardized absolute moment $\tilde{\mu}_d$. Then
	\vspace{-4pt}
	\begin{equation*} 
	\sqrt{2} \binom{d}{d/2}^{\!\!-\frac{1}{d}} \cnorm{Z}_{\X,2}
	\leq \cnorm{Z}_{\X,d} \leq 4 \Bigg(\frac{  B_d \Tilde{\mu}_d }{2 \binom{d}{d/2}}\Bigg)^{\!\!\frac{1}{d}}  \cnorm{Z}_{\X,2} ,
	\end{equation*}
	where $B_d$ is the constant in the Marcinkiewicz--Zygmund inequality \eqref{Eq:M-Z}.
\end{proposition}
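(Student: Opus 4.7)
The lower bound is immediate from Theorem \ref{Theorem:Main2}.(c) with $p=2$ and $q=d$: that inequality reads $\binom{2}{1}^{1/2}\cnorm{Z}_{\X,2} \le \binom{d}{d/2}^{1/d}\cnorm{Z}_{\X,d}$, and rearranging (and noting $\binom{2}{1}^{1/2}=\sqrt{2}$) yields the claim.

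For the upper bound, the plan is to first prove the Hermitian version and then lift to $\M_n$ via Lemma \ref{lem:generalize}. Fix $A \in \H_n$ with eigenvalue vector $\LL$; write $\mu=\E[X_i]$, $\sigma^2=\operatorname{Var}(X_i)$, and set $Y_i = X_i-\mu$, so that $\vec{Y}=(Y_1,\ldots,Y_n)$ is an iid, mean-zero random vector with $\E|Y_i|^d = \sigma^d\tilde\mu_d$. The identity $\langle\X,\LL\rangle = \langle\vec{Y},\LL\rangle + \mu\tr A$ together with Minkowski's inequality in $L^d$ gives
\[\norm{A}_{\X,d} \le \bigl\|\langle\vec{Y},\LL\rangle\bigr\|_{L^d} + |\mu|\,|\tr A|,\]
and Theorem \ref{ineq - main}, applied to the centered vector $\vec{Y}$, bounds the first summand by $(B_d\tilde\mu_d)^{1/d}\sigma\,\norm{A}_{\operatorname{F}}$. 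A two-vector Cauchy--Schwarz on the pairs $\bigl((B_d\tilde\mu_d)^{1/d},1\bigr)$ and $\bigl(\sigma\norm{A}_{\operatorname{F}},|\mu|\,|\tr A|\bigr)$, combined with the identity $\norm{A}_{\X,2}^2 = \sigma^2\norm{A}_{\operatorname{F}}^2 + \mu^2|\tr A|^2$ from Example \ref{Ex:d=2}, then produces
\[\norm{A}_{\X,d}^{\,2} \le \bigl[(B_d\tilde\mu_d)^{2/d}+1\bigr]\,\norm{A}_{\X,2}^2.\]
Since $B_d\ge 1$ by Remark \ref{rem - constant} and $\tilde\mu_d\ge 1$ by Lyapunov's inequality \eqref{eq:Lyapunov} (applied to $|X-\mu|/\sigma$ with $s=2$, $t=d$), the bracketed quantity is at most $2(B_d\tilde\mu_d)^{2/d}$, leaving $\norm{A}_{\X,d} \le \sqrt{2}\,(B_d\tilde\mu_d)^{1/d}\norm{A}_{\X,2}$ for every $A\in\H_n$.

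To pass to $\M_n$, I would invoke Lemma \ref{lem:generalize} with $\|\cdot\|=\cnorm{\cdot}_{\X,2}$. This is a norm on $\M_n$ by Theorem \ref{Theorem:Main2}.(g) and satisfies $\cnorm{Z}_{\X,2}=\cnorm{Z^*}_{\X,2}$, as seen by substituting $t\mapsto -t$ in the defining integral \eqref{eq:NormComplex}. The Hermitian bound just established supplies the hypothesis with $k=\sqrt{2}(B_d\tilde\mu_d)^{1/d}$, and the lemma delivers
\[\cnorm{Z}_{\X,d} \le 2\sqrt{2}\,(B_d\tilde\mu_d)^{1/d}\binom{d}{d/2}^{-1/d}\cnorm{Z}_{\X,2}.\]
Finally, $2\sqrt{2}=2^{3/2}\le 2^{2-1/d}=4\cdot 2^{-1/d}$ for every $d\ge 2$ (with equality at $d=2$), so this inequality is at least as sharp as the advertised bound $4\bigl(B_d\tilde\mu_d/(2\binom{d}{d/2})\bigr)^{1/d}\cnorm{Z}_{\X,2}$ and in particular implies it.

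The only piece that requires care is the constant-chasing around the Cauchy--Schwarz/Lyapunov step: one must verify $B_d\tilde\mu_d\ge 1$ in order to collapse $(B_d\tilde\mu_d)^{2/d}+1$ into a single $(B_d\tilde\mu_d)^{1/d}$ factor. Once that is done, the passage from $\H_n$ to $\M_n$ is a direct application of Lemma \ref{lem:generalize}, and the lower bound is a one-line consequence of Theorem \ref{Theorem:Main2}.(c).
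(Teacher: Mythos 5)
Your proof is correct and follows essentially the same route as the paper: the lower bound from Theorem \ref{Theorem:Main2}.(c), centering $X_i-\mu$, the Marcinkiewicz--Zygmund bound of Theorem \ref{ineq - main} for the centered part, the observation $B_d\tilde{\mu}_d\geq 1$, and the lift from $\H_n$ to $\M_n$ via Lemma \ref{lem:generalize} applied to $\cnorm{\cdot}_{\X,2}$. The only deviation is that you recombine the two terms with Minkowski plus Cauchy--Schwarz rather than the paper's Jensen/vector $p$-norm step, which gives the slightly sharper Hermitian constant $\sqrt{2}\,(B_d\tilde{\mu}_d)^{1/d}$ in place of $2^{(d-1)/d}(B_d\tilde{\mu}_d)^{1/d}$, and you correctly check that this implies the stated bound since $2^{3/2}\leq 2^{2-1/d}$ for $d\geq 2$.
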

\begin{proof}
	The lower bound is Theorem \ref{Theorem:Main2}.(c). To prove the upper bound, first suppose that $A\in\H_n$, and observe that Jensen's inequality ensures that
	\begin{align*}
		|\langle \X,\bvec{\lambda} \rangle |^d =  2^d \bigg| \frac{\langle \X-\mu,\bvec{\lambda} \rangle + \mu \tr A}{2} \bigg|^d \!\leq 2^d  \frac{|\langle \X-\mu,\bvec{\lambda} \rangle|^d + |\mu \tr A|^d}{2}.
	\end{align*}
	Since $\E[X-\mu]=0$, it follows from Lemma \ref{ineq - main} that
	\begin{align*}
		\E |\langle \X,\bvec{\lambda} \rangle |^d  &\leq 2^{d-1} \!\left(  \E|\langle \X-\mu,\bvec{\lambda} \rangle|^d  + |\mu \tr A|^d \right) \\
		&\leq 2^{d-1}  \!\left( B_d \tilde{\mu}_d \sigma^d \|A\|_{\operatorname{F}}^d + |\mu \tr A|^d \right),
	\end{align*}
	where $\tilde{\mu}_d \sigma^d= \E[|X-\mu|^d ]$. An application of Lyapunov's inequality \eqref{eq:Lyapunov} reveals that $\tilde{\mu}_d \sigma^d= \E |X-\mu|^d \geq \E[ |X-\mu|^2]^{d/2} = \sigma^d$. Moreover, according to Remark \ref{rem - constant}, $B_d \geq 1$ for any $d\geq 2$. Therefore, $B_d \Tilde{\mu}_d  \geq 1$ and thus
	\begin{align*}
		\E\hspace{-.3pt}\big[ |\langle \X,\bvec{\lambda} \rangle |^d \big] &\leq  
		2^{d-1} \!\left( B_d \Tilde{\mu}_d \sigma^d \|A\|_{\operatorname{F}}^d + |\mu \tr A|^d \right) \\
		&\leq 2^{d-1} \!\left( B_d \Tilde{\mu}_d \sigma^d \|A\|_{\operatorname{F}}^d + B_d\Tilde{\mu}_d|\mu \tr A|^d \right) \\
		&= 2^{d-1} B_d \Tilde{\mu}_d  \!\left( \sigma^d \|A\|_{\operatorname{F}}^d + |\mu \tr A|^d \right).
	\end{align*}
	It then follows from the classical vector $p$-norm inequality that 
	\begin{align}\label{eq - 3}
		\E\hspace{-.3pt}\big[ |\langle \X,\bvec{\lambda} \rangle |^d \big] &\leq  2^{d-1} B_d \Tilde{\mu}_d  \Big( \sigma^d \|A\|_{\operatorname{F}}^d + |\mu \tr A|^d \Big) \nonumber\\[-2pt]
		&\leq 2^{d-1} B_d \Tilde{\mu}_d \Big( \sigma^2 \|A\|_{\operatorname{F}}^2 + \mu^2 |\tr A|^2 \Big)^{\!\smash{\frac{d}{2}}} \\
		&= 2^{\smash{d-1}} B_d \Tilde{\mu}_d \cnorm{A}_{\X,2}^d, \nonumber
	\end{align}
	where the last identity stems from Example \ref{Ex:d=2}. 
	Consequently, 
	\begin{equation*}
		\|A\|_{\X,d}^d = \E\hspace{-.3pt}\big[|\langle \X,\bvec{\lambda} \rangle | ^d  \leq 2^{d-1} B_d \Tilde{\mu}_d  \cnorm{A}_{\X,2}^{d}
	\end{equation*}
	for any $A\in \H_n$. The conclusion then follows from Lemma \ref{lem:generalize}.
	\end{proof}

The following result addresses the case $1\leq d \leq 2$. The proof closely follows what was done in Proposition \ref{prop - main_ineq}. However, since $d\leq 2$, several of the arguments are no longer valid, and more technical inequalities are required.

\begin{proposition}\label{prop - 1-2}
	Let $1\leq d\leq 2$ and $\eta>2$, and let $\X = (X_1, X_2, \dots , X_n)$, where $X_1, X_2, \dots , X_n \in L^{\eta}(\Omega,\mathcal{F},\vec{P})$ are iid random variables of $d$th standardized absolute moment $\tilde{\mu}_d$. Then
	\begin{equation*} 
	4\Bigg( \frac{\left( 2B_{\eta} \Tilde{\mu}_{\eta}  \right)^{\!\smash{\frac{d-2}{\eta-2}}}}{8  \binom{d}{d/2}} \Bigg)^{\!\!\frac{1}{d}} \cnorm{Z}_{\X,2} 
	\leq \cnorm{Z}_{\X,d} 
	\leq  \sqrt{2}  \binom{d}{d/2}^{\!\!-\frac{1}{d}}  \cnorm{Z}_{\X,2},
	\end{equation*}
	where $B_{\eta}$ is the constant in the Marcinkiewicz--Zygmund inequality \eqref{Eq:M-Z}.
\end{proposition}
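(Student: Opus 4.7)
The plan is to treat the upper and lower bounds separately. The upper bound is immediate from Theorem~\ref{Theorem:Main2}.(c) applied with $p = d \leq q = 2$, which yields $\binom{d}{d/2}^{1/d}\cnorm{Z}_{\X,d} \leq \binom{2}{1}^{1/2}\cnorm{Z}_{\X,2} = \sqrt{2}\,\cnorm{Z}_{\X,2}$; rearranging gives the stated upper inequality. The bulk of the work is therefore the lower bound.

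For the lower bound I will first prove a Hermitian analogue by interpolating the $L^p$ norm of $Y = \langle \X, \bvec{\lambda}(A) \rangle$ at $p = 2$ between $p = d$ and $p = \eta$. Lyapunov's inequality \eqref{eq:Lyapunov} (the log-convexity of $p \mapsto \log \E[|Y|^p]$) gives
\begin{equation*}
\|A\|_{\X,2}^2 \leq \|A\|_{\X,d}^{d\lambda}\,\|A\|_{\X,\eta}^{\eta(1-\lambda)}, \qquad \lambda := \tfrac{\eta-2}{\eta-d},
\end{equation*}
for every $A \in \H_n$. The Hermitian core of the proof of Proposition~\ref{prop - main_ineq} (inequality \eqref{eq - 3} with $d$ replaced by $\eta$) provides the complementary pointwise estimate $\|A\|_{\X,\eta}^\eta \leq 2^{\eta-1} B_\eta \Tilde{\mu}_\eta\,\|A\|_{\X,2}^\eta$. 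Substituting this into the interpolation display above and using the identity $2 - \eta(1-\lambda) = d\lambda$ (so that the powers of $\|A\|_{\X,2}$ cancel cleanly) leads, after a short algebraic simplification, to the Hermitian estimate
\begin{equation*}
\|A\|_{\X,d}^d \geq 2^{d-2}\,(2 B_\eta \Tilde{\mu}_\eta)^{(d-2)/(\eta-2)}\,\|A\|_{\X,2}^d \qquad \text{for all } A \in \H_n.
\end{equation*}

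To lift this estimate to $Z \in \M_n$, I will apply it pointwise to $A(t) := e^{it} Z + e^{-it} Z^*$ and integrate over $t \in [0,2\pi]$. The main obstacle will be bounding $\int_0^{2\pi}\|A(t)\|_{\X,2}^d\,\mathrm{d}t$ from below by a multiple of $\cnorm{Z}_{\X,2}^d$, because when $d \leq 2$ the usual Jensen/H\"older arguments on the probability measure $\mathrm{d}t/(2\pi)$ go the wrong way. I will resolve this by exploiting the trigonometric structure furnished by Example~\ref{Ex:d=2}: a direct computation starting from $A(t) = e^{it} Z + e^{-it} Z^*$ yields $\|A(t)\|_{\X,2}^2 = 2\cnorm{Z}_{\X,2}^2 + 2\,\Re(e^{2it} C)$, where $C = \sigma^2 \tr Z^2 + \mu^2 (\tr Z)^2$, and the nonnegativity of the left-hand side forces $|C| \leq \cnorm{Z}_{\X,2}^2$, so $\|A(t)\|_{\X,2}^2 \leq 4\cnorm{Z}_{\X,2}^2$ for every $t$. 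Since $d - 2 \leq 0$, factoring $\|A(t)\|_{\X,2}^d = \|A(t)\|_{\X,2}^{d-2}\cdot \|A(t)\|_{\X,2}^2$ and substituting the maximum gives $\int_0^{2\pi} \|A(t)\|_{\X,2}^d\,\mathrm{d}t \geq 2^{d-2}\cnorm{Z}_{\X,2}^{d-2} \cdot 4\pi\cnorm{Z}_{\X,2}^2 = 2^d \pi\,\cnorm{Z}_{\X,2}^d$. Combining this with the integrated Hermitian estimate and dividing by $2\pi\binom{d}{d/2}$ produces $\cnorm{Z}_{\X,d}^d \geq \binom{d}{d/2}^{-1} 2^{2d-3}(2 B_\eta \Tilde{\mu}_\eta)^{(d-2)/(\eta-2)}\cnorm{Z}_{\X,2}^d$, which is the stated lower bound after taking $d$-th roots.
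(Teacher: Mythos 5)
Your proposal is correct and takes essentially the same route as the paper's proof: the Hölder/Lyapunov interpolation of the second moment between exponents $d$ and $\eta$ combined with estimate \eqref{eq - 3} gives the identical Hermitian bound $\|A\|_{\X,d}^d \geq 2^{d-2}(2B_\eta\Tilde{\mu}_\eta)^{\frac{d-2}{\eta-2}}\|A\|_{\X,2}^d$, and the lift to $\M_n$ uses the same factoring $\|A(t)\|_{\X,2}^d = \|A(t)\|_{\X,2}^{d-2}\,\|A(t)\|_{\X,2}^2$ together with exact integration of the square, yielding the same constant $4^d/(8\binom{d}{d/2})$. The only cosmetic difference is that you obtain the pointwise bound $\|e^{it}Z+e^{-it}Z^*\|_{\X,2} \leq 2\cnorm{Z}_{\X,2}$ from the explicit formula of Example \ref{Ex:d=2} and the nonnegativity argument, whereas the paper gets it directly from the triangle inequality for $\cnorm{\cdot}_{\X,2}$.
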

\begin{proof}
	The upper bound is Theorem \ref{Theorem:Main2}.(c). If $Z=0$, the lower bound is trivial. Hence, suppose that $Z\neq 0$. In that case, Hölder's inequality ensures that for $A\in\H_n$,
	\begin{align*}
		\E\big[|XY|\big] \leq \E\hspace{-.3pt}\big[|X|^p\big]^{\smash{\frac{1}{p}}}\, \E\hspace{-.3pt}\big[|Y|^q\big]^{\smash{\frac{1}{q}}}.
	\end{align*}
	Setting $X= |\langle \X,\bvec{\lambda} \rangle|^{\smash{\frac{\eta-2 }{\eta-d} d}}$, $Y= |\langle \X,\bvec{\lambda} \rangle|^{\smash{\frac{2-d}{\eta-d}\eta}}$, and $p=\frac{\eta-d}{\eta-2}$ yields
	\begin{align}\label{Eq:idk}
		\E\hspace{-.3pt}\big[|\langle \X,\bvec{\lambda} \rangle|^2\big]^{\smash{\frac{\eta-d}{\eta-2}}} \!\leq \E\hspace{-.3pt}\big[|\langle \X,\bvec{\lambda} \rangle|^d\big] \,\E\hspace{-.3pt}\big[|\langle \X,\bvec{\lambda} \rangle|^{\eta}\big]^{\smash{\frac{2-d}{\eta-2}}} .
	\end{align}
	Recall that \eqref{eq - 3} states that $\E\hspace{-.3pt}\big[ |\langle \X,\bvec{\lambda} \rangle |^{\eta} \big] \leq  2^{\eta-1} B_{\eta} \Tilde{\mu}_{\eta} \E\hspace{-.3pt}\big[ |\langle \X,\bvec{\lambda} \rangle |^2 \big]^{\smash{\frac{\eta}{2}}}$. 
	It then follows from \eqref{Eq:idk} and the above that
	\begin{equation*}
	\E\hspace{-.3pt}\big[|\langle \X,\bvec{\lambda} \rangle|^2\big]^{\smash{\frac{d}{2}}} \leq 
	\big(2^{\eta-1} B_{\eta} \Tilde{\mu}_{\eta}  \big)^{\!\smash{\frac{2-d}{\eta-2}}}   \E\hspace{-.3pt}\big[|\langle \X,\bvec{\lambda} \rangle|^d\big],
	\end{equation*}
	which implies that
	\begin{equation*}
	2^{\smash{d-2}} \!\left( 2B_{\eta} \Tilde{\mu}_{\eta}  \right)^{\!\smash{\frac{d-2}{\eta-2}}} \|A\|_{\X,2}^{d} \leq \|A\|_{\X,d}^d ~\quad\text{for any $A\in\H_n$.}
	\end{equation*}
	Hence, it follows that for $Z\in \M_n$,
	\begin{align*}
		\cnorm{Z}_{\X,d}^d &= \frac{1}{2\pi \binom{d}{d/2}} \int_0^{2\pi} \big\|e^{it}Z + e^{-it} Z^* \big\|_{\X,d}^d \,\mathrm{d}t \\
		&\geq \frac{2^{\smash{d}} \!\left( 2B_{\eta} \Tilde{\mu}_{\eta}  \right)^{\!\smash{\frac{d-2}{\eta-2}}}}{8\pi  \binom{d}{d/2}} \int_0^{2\pi} \big\|e^{it}Z + e^{-it} Z^* \big\|_{\X,2}^d \,\mathrm{d}t .
	\end{align*}
	In this case, Jensen's inequality cannot be useful. Instead, observe that
	\begin{align*}
		\big\|e^{it}Z + e^{-it} Z^* \big\|_{\X,2}^d &= \frac{\|e^{it}Z + e^{-it} Z^* \|_{\X,2}^2}{\cnorm{e^{it}Z + e^{-it} Z^*}_{\X,2}^{2-d}} \geq \frac{\|e^{it}Z + e^{-it} Z^* \|_{\X,2}^2}{\big(\cnorm{e^{it}Z}_{\X,2} + \cnorm{e^{-it} Z^*}_{\X,2}\big)^{2-d}} \\
		&= \frac{\big\|e^{it}Z + e^{-it} Z^* \big\|_{\X,2}^2}{2^{2-d} \cnorm{Z}_{\X,2}^{2-d}}.
	\end{align*}
	It follows that
	\begin{align*}
		\cnorm{Z}_{\X,d}^d &\geq \frac{2^{\smash{d}} \!\left( 2B_{\eta} \Tilde{\mu}_{\eta}  \right)^{\!\smash{\frac{d-2}{\eta-2}}}}{8\pi  \binom{d}{d/2}} \int_0^{2\pi} \big\|e^{it}Z + e^{-it} Z^* \big\|_{\X,2}^d \,\mathrm{d}t \\
		&\geq \frac{2^{\smash{2d}} \!\left( 2B_{\eta} \Tilde{\mu}_{\eta}  \right)^{\!\smash{\frac{d-2}{\eta-2}}}}{8  \binom{d}{d/2}  \cnorm{Z}_{\X,2}^{2-d}} \cdot \frac{1}{4\pi} \int_0^{2\pi} \big\|e^{it}Z + e^{-it} Z^* \big\|_{\X,2}^2 \,\mathrm{d}t \\[2pt]
		&= \frac{4^d \!\left( 2B_{\eta} \Tilde{\mu}_{\eta}  \right)^{\!\smash{\frac{d-2}{\eta-2}}}}{8  \binom{d}{d/2}} \cnorm{Z}_{\X,2}^d. \tag*{\qedhere}
	\end{align*}
\end{proof}

\section{Submultiplicativity of random vector norms}\label{Section:Submultiplicativity}

Recall that a \emph{matrix norm} is a norm on $\M_n$ which is also submultiplicative on $\M_n$. Theorem \ref{Theorem:Main2}.(a) establishes that $\norm{\cdot}_{\X,d}$ is a weakly unitarily invariant norm on $\H_n$. Proposition \ref{p:gennorm} further establishes that $\cnorm{\cdot}_{\X,d}$ is a norm on $\M_n$. However, neither of them are necessarily submultiplicative. 
For instance, according to Example \ref{Ex:d=2}, the random vector norm of order $2$ induced by a random variable of mean and standard variation $1$ of the matrices $\left[\begin{smallmatrix}
	0&1\\1&0
\end{smallmatrix}\right]$ and $I_2$ are equal to $1$ and $\sqrt{3}$ respectively. Therefore, this norm is not submultiplicative since
\begin{equation*}
\|K^2\|_{\X,2} = \|I_2\|_{\X,2}=\sqrt{3} > 1^2= \|K\|_{\X,2}^2.
\end{equation*} 
In general, if $\|\cdot\|$ is a norm on $M_n$, then there is always a scalar multiple of it (which may depend upon the dimension $n$) which is submultiplicative \cite[Thm.~5.7.11]{HJ}. Indeed, consider $\gamma=\max_{\|Z_1\|=1=\|Z_2\|} \|Z_1 Z_2\|$. Then
\begin{align*}
	\gamma \|Z_1 Z_2\| = \|Z_1\| \|Z_2\| \cdot \gamma \left\| \frac{Z_1}{\|Z_1\|} \frac{Z_2}{\|Z_2\|} \right\| \leq  \gamma\|Z_1\| \cdot \gamma \|Z_2\|.
\end{align*}
If such a scalar is also independent of $n$, then one can simply scale the underlying distribution of the entries of $\X$ by an appropriate constant to obtain a single matrix norm for every dimension $n$, that is, $\gamma\cnorm{\cdot}_{\X,d} = \cnorm{\cdot}_{\gamma\X,d}$ if $\gamma>0$. This is not possible if the constant depends on $n$. Note that, for $A,B\in \H_n$, the product $AB$ might not even be Hermitian. Hence, the matter of submultiplicativity of the random vector norms only make sense for $\cnorm{\cdot}_{\X,d}$. 

In this section, we establish the following result, which is \cite[Thm.~2]{Bouthat1}.
\begin{theorem}\label{thm - main}
	Let $d\geq 1$ and $\X = (X_1, X_2,\ldots , X_n)$, where $X_1, X_2,\ldots , X_n \in L^p(\Omega,\mathcal{F},\vec{P})$, in which $p=\max\{d,\eta\}$ for some $\eta>2$. Then there exists a constant $\gamma_d>0$, independent of $n$, such that $\gamma_d\cnorm{Z}_{\X,d}$ is a (submultiplicative) matrix norm on $M_n$.
\end{theorem}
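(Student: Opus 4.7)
The plan is to reduce submultiplicativity of a rescaling of $\cnorm{\cdot}_{\X,d}$ to producing a single dimension-independent bound $\cnorm{Z_1 Z_2}_{\X,d} \leq C_d \cnorm{Z_1}_{\X,d} \cnorm{Z_2}_{\X,d}$, after which $\gamma_d := C_d$ immediately renders $\gamma_d \cnorm{\cdot}_{\X,d}$ submultiplicative. The strategy is to first settle the case $d = 2$ by hand using the closed-form expression from Example \ref{Ex:d=2}, and then bootstrap to general $d$ via the dimension-free equivalence between $\cnorm{\cdot}_{\X,d}$ and $\cnorm{\cdot}_{\X,2}$ established in Subsection \ref{subsec:involvedbound}.

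For the base case $d = 2$, I would invoke $\cnorm{Z}_{\X,2}^2 = \sigma^2 \|Z\|_{\operatorname{F}}^2 + \mu^2 |\tr Z|^2$, noting that $\sigma > 0$ by nondegeneracy. Submultiplicativity of the Frobenius norm together with the Cauchy--Schwarz estimate $|\tr(Z_1 Z_2)| \leq \|Z_1\|_{\operatorname{F}} \|Z_2\|_{\operatorname{F}}$ yields $\cnorm{Z_1 Z_2}_{\X,2}^2 \leq (\sigma^2 + \mu^2) \|Z_1\|_{\operatorname{F}}^2 \|Z_2\|_{\operatorname{F}}^2$, while the same formula provides $\|Z\|_{\operatorname{F}} \leq \sigma^{-1} \cnorm{Z}_{\X,2}$. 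Chaining these gives $C_2 = \sigma^{-1}\sqrt{1 + \mu^2/\sigma^2}$, a constant depending only on the underlying distribution of $X$ and not on $n$. For arbitrary $d \geq 1$, I would then apply Proposition \ref{prop - main_ineq} when $d \geq 2$ and Proposition \ref{prop - 1-2} when $1 \leq d \leq 2$; the latter is exactly why the hypothesis requires $X_i \in L^\eta$ for some $\eta > 2$, and the condition $X_i \in L^p$ with $p = \max\{d,\eta\}$ guarantees that all relevant moments exist. In either regime those results deliver dimension-free constants $c_1, c_2 > 0$ with $c_1 \cnorm{Z}_{\X,2} \leq \cnorm{Z}_{\X,d} \leq c_2 \cnorm{Z}_{\X,2}$, so that
\begin{equation*}
\cnorm{Z_1 Z_2}_{\X,d} \leq c_2 \cnorm{Z_1 Z_2}_{\X,2} \leq c_2 C_2 \cnorm{Z_1}_{\X,2} \cnorm{Z_2}_{\X,2} \leq \frac{c_2 C_2}{c_1^2} \cnorm{Z_1}_{\X,d} \cnorm{Z_2}_{\X,d},
\end{equation*}
and one may take $\gamma_d = c_2 C_2 / c_1^2$.

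The principal obstacle is not submultiplicativity in the abstract—every finite-dimensional norm on $\M_n$ admits a submultiplicative rescaling by the general argument sketched at the start of Section \ref{Section:Submultiplicativity}—but rather ensuring that the resulting constant does not depend on $n$. This is precisely the reason the dimension-free estimates of Subsection \ref{subsec:involvedbound} were developed, and it is where the proof genuinely leans on the hypothesis $\eta > 2$. The cruder bounds of Subsection \ref{subsec:simplebound}, which carry factors like $n^{d-1}$, are insufficient here and would yield only an $n$-dependent rescaling; threading the argument through the $d = 2$ case and the dimension-free ratio of $\cnorm{\cdot}_{\X,d}$ to $\cnorm{\cdot}_{\X,2}$ is what makes the result uniform in $n$.
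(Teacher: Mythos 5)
Your proposal is correct and follows essentially the same route as the paper: settle $d=2$ via the closed form $\cnorm{Z}_{\X,2}^2=\sigma^2\|Z\|_{\operatorname{F}}^2+\mu^2|\tr Z|^2$ together with Frobenius submultiplicativity and Cauchy--Schwarz (this is Proposition \ref{prop - d=2}, with your $C_2$ equal to the paper's $\gamma$), then transfer to general $d$ through the dimension-free equivalences of Propositions \ref{prop - main_ineq} and \ref{prop - 1-2}. Your chained inequality with $\gamma_d=c_2C_2/c_1^2$ is exactly the content of the paper's Lemma \ref{thm - useful}, which you have simply inlined.
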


\subsection{The case \texorpdfstring{$d=2$}{d=2}}
	\label{sec - main}
	
Recall that when $d=2$, Example \ref{Ex:d=2} reveals that
\begin{equation*}
\cnorm{Z}_{\X,2}^2 \,=\,  \sigma^2 \|Z\|_{\operatorname{F}}^2 + \mu^2 |\tr Z|^2.
\end{equation*}
This elegant formulation makes it possible to show in the proposition below that $\cnorm{Z}_{\X,2}$ is a submultiplicative norm when multiplied by a constant only dependent on the mean $\mu$ and the standard deviation $\sigma$ of $X_i$. 
The following is \cite[Prop.~6]{Bouthat1}.

\begin{proposition}\label{prop - d=2}
Let $d=2$ and $\X = (X_1,X_2,\ldots , X_n)$ where $X_1,X_2,\ldots , X_n \in L^d(\Omega,\mathcal{F},\vec{P})$ are iid random variables. Then there exists a constant $\gamma>0$, independent of $n$, such that $\gamma\cnorm{Z}_{\X,2}$ is a matrix norm on $M_n$.
\end{proposition}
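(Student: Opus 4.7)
The plan is to leverage the explicit formula $\cnorm{Z}_{\X,2}^{2} = \sigma^{2}\|Z\|_{\operatorname{F}}^{2} + \mu^{2}|\tr Z|^{2}$ from Example \ref{Ex:d=2}, together with the submultiplicativity of the Frobenius norm. Since scaling a norm by a positive constant again yields a norm, the only nontrivial point is submultiplicativity, which is equivalent to exhibiting a $\gamma > 0$ (independent of $n$) such that $\cnorm{Z_{1}Z_{2}}_{\X,2} \leq \gamma \cnorm{Z_{1}}_{\X,2}\cnorm{Z_{2}}_{\X,2}$ for all $Z_{1}, Z_{2} \in M_{n}$.

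First I would estimate the numerator by bounding each of the two summands in the explicit formula. Submultiplicativity of the Frobenius norm gives $\|Z_{1}Z_{2}\|_{\operatorname{F}}^{2} \leq \|Z_{1}\|_{\operatorname{F}}^{2}\|Z_{2}\|_{\operatorname{F}}^{2}$, while the Cauchy--Schwarz inequality applied to the Frobenius (trace) inner product yields
$$|\tr(Z_{1}Z_{2})| \;=\; |\langle Z_{1}, Z_{2}^{*}\rangle_{\operatorname{F}}| \;\leq\; \|Z_{1}\|_{\operatorname{F}}\|Z_{2}\|_{\operatorname{F}}.$$
Crucially, both estimates are dimension-free. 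Combining them with the explicit formula gives
$$\cnorm{Z_{1}Z_{2}}_{\X,2}^{2} \;\leq\; (\sigma^{2}+\mu^{2})\,\|Z_{1}\|_{\operatorname{F}}^{2}\,\|Z_{2}\|_{\operatorname{F}}^{2}.$$

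Next I would invoke the trivial lower bound $\sigma^{2}\|Z\|_{\operatorname{F}}^{2} \leq \cnorm{Z}_{\X,2}^{2}$, which is immediate from the explicit formula and the positivity of $\sigma^{2}$ (guaranteed by the nondegeneracy of the $X_{i}$). Substituting into the previous display produces
$$\cnorm{Z_{1}Z_{2}}_{\X,2}^{2} \;\leq\; \frac{\sigma^{2}+\mu^{2}}{\sigma^{4}}\,\cnorm{Z_{1}}_{\X,2}^{2}\,\cnorm{Z_{2}}_{\X,2}^{2},$$
so that $\gamma = \sigma^{-2}\sqrt{\sigma^{2}+\mu^{2}}$ is an admissible choice, depending only on the common distribution of the $X_{i}$ and not on $n$.

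I expect no substantial obstacle: the whole proof rests on the fortunate fact that at $d=2$ the norm $\cnorm{\cdot}_{\X,2}$ is sandwiched between positive constant multiples of $\|\cdot\|_{\operatorname{F}}$ via a dimension-free constant, and that the two non-Frobenius ingredients ($|\tr Z_{1}Z_{2}|$ and $\|Z_{1}Z_{2}\|_{\operatorname{F}}$) admit dimension-free bounds by $\|Z_{1}\|_{\operatorname{F}}\|Z_{2}\|_{\operatorname{F}}$. The only delicate point is ensuring the constant is uniform in $n$, which is automatic here. The general-$d$ case (Theorem \ref{thm - main}) will presumably be reduced to this one via the dimension-free equivalence of $\cnorm{\cdot}_{\X,d}$ and $\cnorm{\cdot}_{\X,2}$ furnished by Proposition \ref{prop - main_ineq} (and Proposition \ref{prop - 1-2} for $1 \leq d \leq 2$), which explains the $L^{p}$ assumption with $p = \max\{d,\eta\}$ for some $\eta > 2$ in the general statement.
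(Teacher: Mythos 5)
Your proof is correct and is essentially the paper's own argument: the explicit formula $\cnorm{Z}_{\X,2}^2=\sigma^2\|Z\|_{\operatorname{F}}^2+\mu^2|\tr Z|^2$, Cauchy--Schwarz for the trace term, submultiplicativity of the Frobenius norm, and the lower bound $\sigma\|Z\|_{\operatorname{F}}\leq\cnorm{Z}_{\X,2}$, yielding the same constant $\gamma=\sigma^{-2}\sqrt{\sigma^2+\mu^2}$. The only difference is cosmetic (you prove $\cnorm{Z_1Z_2}_{\X,2}\leq\gamma\cnorm{Z_1}_{\X,2}\cnorm{Z_2}_{\X,2}$ and then rescale, whereas the paper manipulates the scaled norm directly).
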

\vspace{-1pt}
\begin{proof}
Let $\gamma^2= \frac{\sigma^2+\mu^2}{\sigma^4}$ (which is independent from $n$). Then
\begin{align*}\label{eq - ineq}
	\gamma \cnorm{Z}_{\X,2} &= \frac{\sqrt{\sigma^2+\mu^2}}{\sigma^2} \sqrt{\sigma^2 \|Z\|_{\operatorname{F}}^2 +  \mu^2 |\tr Z|^2} \geq \frac{\sqrt{\sigma^2+\mu^2}}{\sigma^2} \sqrt{\sigma^2 \|Z\|_{\operatorname{F}}^2} \\
	&= \frac{\sqrt{\sigma^2+\mu^2}}{\sigma}  \|Z\|_{\operatorname{F}}. \nonumber
\end{align*}
Cauchy--Schwarz and the submultiplicativity of the Frobenius norm implies that
\begin{align}
	\gamma \cnorm{Z_1 Z_2}_{\X,2} &= \frac{\sqrt{\sigma^2+\mu^2}}{\sigma^2} \sqrt{\sigma^2 \|Z_1 Z_2\|_{\operatorname{F}}^2 +  \mu^2 |\tr Z_1 Z_2|^2}\\
	&\leq \frac{\sqrt{\sigma^2+\mu^2}}{\sigma^2} \sqrt{\sigma^2 \|Z_1\|_{\operatorname{F}}^2 \|Z_2\|_{\operatorname{F}}^2 +  \mu^2 \|Z_1\|_{\operatorname{F}}^2 \|Z_2\|_{\operatorname{F}}^2}\\
	&= \frac{\sqrt{\sigma^2+\mu^2}}{\sigma^2} \sqrt{\sigma^2 +  \mu^2} \,\|Z_1\|_{\operatorname{F}} \|Z_2\|_{\operatorname{F}}\\
	&= \bigg(\frac{\sqrt{\sigma^2+\mu^2}}{\sigma} \,\|Z_1\|_{\operatorname{F}} \bigg) \bigg(\frac{\sqrt{\sigma^2+\mu^2}}{\sigma} \,\|Z_2\|_{\operatorname{F}} \bigg)\\
	&\leq \big( \gamma \cnorm{Z_1}_{\X,2} \big) \big( \gamma \cnorm{Z_2}_{\X,2} \big).\tag*{\qedhere}
\end{align}
\end{proof}

\begin{remark}\label{Remark:d=2}
The constant $\gamma^2= \frac{\sigma^2+\mu^2}{\sigma^4}$ is the smallest constant such that $\gamma \cnorm{\cdot}_{\X,2}$ is submultiplicative, if independence from $n$ is required. Indeed, consider the matrix
\begin{equation*}
A_n = J_n - I_n,
\end{equation*}
where $J_n$ is the $n\times n$ all-ones matrix and $I_n$ is the $n\times n$ identity matrix.
Then
\begin{equation*}
\gamma \cnorm{A_n}_{\X,2} = \frac{\sqrt{\sigma^2+\mu^2}}{\sigma} \sqrt{n^2-n}
\end{equation*}
and
\begin{equation*}
\gamma \cnorm{A_n^2}_{\X,2} = \frac{\sqrt{\sigma^{2}+\sigma^{2}}}{\sigma^{2}}\sqrt{\sigma^{2}n\left(n-1\right)\left(n^{2}-3n+3\right)+\mu^{2}n^{2}\left(n-1\right)^{2}}.
\end{equation*}
It follows that
\begin{align*}
	\frac{\gamma \cnorm{A_n^2}_{\X,2}}{\gamma \cnorm{A_n}_{\X,2} \cdot \gamma \cnorm{A_n}_{\X,2}} = \sqrt{1-\frac{\sigma^{2}}{\sigma^{2}+\mu^{2}}\frac{2n-3}{n(n-1)}} \,\xrightarrow{n\to\infty}\, 1,
\end{align*}
As a direct consequence, the norm $\cnorm{\cdot}_{\X,2}$ is a matrix norm for all $n\geq 1$ if and only if $2\sigma^2 \geq 1+\sqrt{1+4\mu^2}$.
\end{remark}

\subsection{Proof of Theorem \ref{thm - main}}

Let us first prove the following short lemma.

\begin{lemma}\label{thm - useful}
	Let $N(\cdot)$ be a norm on $M_n$ and let $\|\cdot\|$ be a matrix norm on $M_n$. If $C_m$ and $C_M$ are positive constants such that
	\begin{equation*}
		C_m \|Z\| \leq N(Z) \leq C_M \|Z\|\quad \text{for all } A\in M_n,
	\end{equation*}
	then $(C_M/C_m^2 )N(\cdot)$ is a matrix norm.
\end{lemma}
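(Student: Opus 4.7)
The plan is to chain together the two bounds with the submultiplicativity of $\|\cdot\|$, and read off the correct scaling factor. Let $c = C_M/C_m^2$; the goal is to verify that the scaled norm $M(\cdot) := c\,N(\cdot)$ satisfies $M(Z_1 Z_2) \leq M(Z_1)\,M(Z_2)$ for all $Z_1, Z_2 \in M_n$. Unpacking this inequality shows that it is equivalent to
\begin{equation*}
N(Z_1 Z_2) \leq c\, N(Z_1)\, N(Z_2),
\end{equation*}
so the matter reduces to establishing this single estimate.

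The key idea is to pass through the matrix norm $\|\cdot\|$. First I would use the upper equivalence $N(Z) \leq C_M \|Z\|$ applied to the product $Z_1 Z_2$, then invoke the submultiplicativity of $\|\cdot\|$ to bound $\|Z_1 Z_2\| \leq \|Z_1\|\|Z_2\|$, and finally apply the lower equivalence $\|Z\| \leq N(Z)/C_m$ to each factor. Concatenating these yields
\begin{equation*}
N(Z_1 Z_2) \leq C_M \|Z_1 Z_2\| \leq C_M \|Z_1\|\|Z_2\| \leq \frac{C_M}{C_m^2}\, N(Z_1)\, N(Z_2) = c\, N(Z_1)\, N(Z_2),
\end{equation*}
which is exactly the reformulated target.

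Since $N(\cdot)$ is already a norm by hypothesis and $c > 0$, the scaled function $c\, N(\cdot)$ is immediately a norm; the computation above upgrades it to a matrix norm. There is no real obstacle here: the argument is a direct three-step sandwich, and the specific form of the constant $C_M/C_m^2$ is dictated by the need to pick up one factor of $C_M$ (from the upper bound used once) and two factors of $1/C_m$ (from the lower bound used twice). One minor sanity check worth noting is that the result is sharp in form in the sense that any smaller constant would, in general, fail for norms saturating both equivalences simultaneously, as illustrated by Remark~\ref{Remark:d=2} in the $d=2$ case.
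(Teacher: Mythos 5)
Your argument is correct and is essentially the same as the paper's: you apply the upper bound $N(Z_1Z_2)\leq C_M\|Z_1Z_2\|$, the submultiplicativity of $\|\cdot\|$, and then the lower bound $\|Z_i\|\leq N(Z_i)/C_m$ on each factor, which is precisely the chain of inequalities in the paper's proof of Lemma \ref{thm - useful}. Your reduction to $N(Z_1Z_2)\leq (C_M/C_m^2)\,N(Z_1)N(Z_2)$ is just an equivalent rephrasing of that computation, so there is nothing to add.
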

\begin{proof}
	Let $\gamma=C_M/C_m^2$. A direct computation yields
	\begin{align}
		\gamma N(Z_1 Z_2) &= \frac{C_M}{C_m^2} N(Z_1 Z_2) \leq \frac{C_M}{C_m^2} C_M \|Z_1 Z_2\| \leq \frac{C_M^2}{C_m^2} \|Z_1\| \|Z_2\| \nonumber\\
		&\leq \frac{C_M^2}{C_m^2} C_m^{-1} N(Z_1) C_m^{-1} N(Z_2) = \gamma N(Z_1) \, \gamma N(Z_2). \tag*{\qedhere}
	\end{align}
\end{proof}

Now let $\gamma^2= \frac{\sigma^2+\mu^2}{\sigma^4}$, as defined in Proposition \ref{prop - d=2}, and suppose that $d\geq 2$. Proposition \ref{prop - main_ineq} implies that 
\begin{equation*}
\sqrt{2} \binom{d}{d/2}^{\!\!-\frac{1}{d}} \gamma\cnorm{Z}_{\X,2}
\leq \gamma\cnorm{Z}_{\X,d} \leq 4 \Bigg(\frac{  B_d \Tilde{\mu}_d }{2 \binom{d}{d/2}}\Bigg)^{\!\!\frac{1}{d}}  \gamma\cnorm{Z}_{\X,2} .
\end{equation*}
Since $\gamma\cnorm{\cdot}_{\X,2}$ is submultiplicative by Proposition \ref{prop - d=2}, Lemma \ref{thm - useful} ensures that $\gamma_d \cnorm{Z}_{\X,d}$ is submultiplicative, where
\begin{align*}
	\gamma_d &= 4 \Bigg(\frac{  B_d \Tilde{\mu}_d }{2 \binom{d}{d/2}}\Bigg)^{\!\!\frac{1}{d}}  \!\!\Bigg/\!   2 \binom{d}{d/2}^{\!\!-\frac{2}{d}} 
	\!= 2 \Big(\tfrac{1}{2}  B_d \Tilde{\mu}_d \tbinom{d}{d/2}\Big)^{\!\frac{1}{d}}\!. 
\end{align*}
If $1\leq d \leq 2$, then Proposition \ref{prop - 1-2} ensures that 
\begin{equation*} 
4\Bigg( \frac{\left( 2B_{\eta} \Tilde{\mu}_{\eta}  \right)^{\!\smash{\frac{d-2}{\eta-2}}}}{8  \binom{d}{d/2}} \Bigg)^{\!\!\frac{1}{d}} \gamma\cnorm{Z}_{\X,2} 
\leq \gamma\cnorm{Z}_{\X,d} 
\leq  \sqrt{2}  \binom{d}{d/2}^{\!\!-\frac{1}{d}} \gamma \cnorm{Z}_{\X,2}.
\end{equation*}
Once again, since $\gamma\cnorm{\cdot}_{\X,2}$ is submultiplicative, Lemma \ref{thm - useful} ensures that $\gamma_d \cnorm{Z}_{\X,d}$ is submultiplicative, where
\begin{align*}
	\gamma_d = \sqrt{2}  \binom{d}{d/2}^{\!\!-\frac{1}{d}}     \!\!\Bigg/\!   16\Bigg( \frac{\left( 2B_{\eta} \Tilde{\mu}_{\eta}  \right)^{\!\smash{\frac{d-2}{\eta-2}}}}{8  \binom{d}{d/2}} \Bigg)^{\!\!\frac{2}{d}}
	\!= \frac{\sqrt{2}}{16}  \Big( 64  \tbinom{d}{d/2} \!\left( 2B_{\eta} \Tilde{\mu}_{\eta}  \right)^{\!\smash{2\frac{2-d}{\eta-2}}} \Big)^{\!\frac{1}{d}} .
\end{align*}
Since $\gamma_d$ does not depend on $n$ for any $d\geq 1$, we are done. $\hfill \qed$

\smallskip
\begin{remark}
	In Subsection \ref{Subsection:a-stable}, it was shown if the $X_i$ follow a symmetric $\alpha$-stable distribution with scale parameter $\gamma$, then the induced random vector norm is
	\vspace{-6pt}
	\begin{equation}
		\|A\|_{\X,d} = \gamma \!\left( \frac{2 \sin\!\big( \tfrac{d\pi}{2} \big) \Gamma(d+1)}{\alpha\sin\!\big(\tfrac{ d\pi}{\alpha}\big)\Gamma\big(\tfrac{d}{\alpha}+1\big)} \right)^{\!\!\frac{1}{d}}\! \|A\|_{S_\alpha} \quad \text{for $A\in \H_n$}
	\end{equation}
	(see \eqref{Eq:alpha}). Now, applying the techniques of the proof of Proposition \ref{prop - main_ineq}, we obtain
	\vspace{-3pt}
	\begin{equation*}
		\pi^{d-1}   \|Z\|_{S_\alpha}^d \leq \frac{\alpha \binom{d}{d/2} \sin\!\big(\tfrac{ d\pi}{\alpha}\big)\Gamma\big(\tfrac{d}{\alpha}+1\big)}{(2\gamma)^d \sin\!\big( \tfrac{d\pi}{2} \big) \Gamma(d+1)} \cnorm{Z}_{\X,d}^d \leq  2 \|Z\|_{S_\alpha}^d.
	\end{equation*}
	If $\gamma$ is such that $\gamma^d \geq  \smash{ \alpha(2\pi^2)^{1-d} \tbinom{d}{d/2}  \frac{\sin(\frac{d\pi}{\alpha}) \Gamma(d/\alpha+1)}{\sin(\frac{d\pi}{2}) \Gamma(d+1)} }$, then it follows from Lemma \ref{thm - useful} that $\cnorm{Z}_{\X,d}$ is submultiplicative. 
	
	However, for $\alpha\in(1,2)$, we have $\E[|X_i|^d] < \infty$ if and only if $d\in (-1,\alpha)$ \cite[p.~108]{Nolan}. Consequently, $X_i\not\in L^{\eta}(\Omega,\mathcal{F},\vec{P})$ for any $\eta>2$. Since $\cnorm{Z}_{\X,d}$ is submultiplicative, this shows that the assumption $X\in L^{\smash{\eta}}(\Omega,\mathcal{F},\vec{P})$ in Theorem \ref{thm - main} is not necessary.
\end{remark}

\section{CHS norms}
\label{Sec:CMS}

Most of the material in this section comes from \cite{Aguilar}.
In the following, $d\geq 2$ is an even integer. The random vector norms arising from the standard exponential distribution, which were computed in Subsection \ref{Subsection:Gamma}, have a very interesting form: $\norm{A}_{\X,d}^d = d!\, h_d(\lambda_1, \lambda_2, \ldots, \lambda_n)$ (see \eqref{Eq:CHS_noproof}). Let us show this identity in more detail. Let $\alpha=\beta=1$ in \eqref{eq:Gamma} and \eqref{eq:GammaMGF}, which corresponds to a standard exponential distribution. Then Theorem \ref{Theorem:Main2}.(e) ensures that
\begin{equation}\label{eq:CHS1}
	\norm{A}_{\X,d}^d
	= [t^d] \prod_{i=1}^n\frac{d!}{1-\lambda_it}=[t^d] \frac{d!}{t^np_A(t^{-1})}
	\quad \text{for $A \in \H_n$},
\end{equation} 
which is \cite[Thm.~20]{Aguilar}.
For even $d \geq 2$, \eqref{eq:CHSGenerating} then yields
\begin{align}
	\norm{A}_{\X,d}^d 
	&= [t^d] \prod_{i=1}^n\frac{d!}{1-\lambda_it}
	=d! \cdot [t^d] \sum_{r=0}^{\infty} h_r(\lambda_1, \lambda_2, \ldots, \lambda_n) t^r \\
	&= d! \, h_d(\lambda_1, \lambda_2, \ldots, \lambda_n).
\end{align}
If, instead of considering the standard exponential variables, one consider the gamma distribution with parameters $\alpha=1$ and $\beta=(d!)^{-1/d}$, then one retrieves
\begin{align}\label{eq:CHS2}
	\norm{A}_{\X,d}^d = h_d(\lambda_1, \lambda_2, \ldots, \lambda_n).
\end{align}
For simplicity, because of their strong connection with complete homogeneous symmetric polynomials, we refer to this norm as the \emph{CHS norm} of order $d$ and write
\begin{equation}\label{Theorem:MainCHS}
	\norm{A}_{d}=\Big( h_{d}\big(\lambda_1(A), \lambda_2(A), \ldots, \lambda_n(A)\big)\Big)^{1/d} \quad\text{for $A\in H_n$.}
\end{equation} 

\begin{remark}
	In \cite{CGH,Ours}, the CHS norms were obtained by considering the parameters $\alpha=\beta=1$, corresponding to a standard exponential distribution. However, due to the diverging definition of the random vector norms (see Remark \ref{Rem:redef}), we use the parameters $\alpha=1$ and $\beta=(d!)^{-1/d}$ to simplify the following.
\end{remark}

Parts (a) and (e) of Theorem \ref{Theorem:Main2} ensure that the CHS norms are weakly unitarily invariant on $\H_n$ and Schur-convex relative to the vector of eigenvalues $\bvec{\lambda}$ of $A\in\H_n$. 
Moreover, from \eqref{eq:KappaGamma} with $\alpha=1$ and $\beta=(d!)^{-1/d}$, we have $\kappa_i=(d!)^{-i/d} (i-1)!$. Therefore, if $z_{\bvec{\pi}}=\prod_{i\geq 1}i^{m_i}m_i!$,
\begin{equation*}
	y_{\bvec{\pi}} \kappa_{\bvec{\pi}} =\frac{d! \prod_{i\geq 1} \big[(d!)^{-1/d} (i-1)!\big]^{m_i}}{\prod_{i\geq 1} (i!)^{m_i} m_i!}=\frac{1}{\prod_{i\geq 1}i^{m_i}m_i!} = \frac{1}{z_{\bvec{\pi}}}
\end{equation*} 
for any partition $\bvec{\pi}$. 
If $\TT_{\bvec{\pi}} : \M_{n}\to \R$ is defined as in Theorem \ref{Theorem:Main2}.(g), then the same result extends the CHS norms to the whole space $\M_n$:
\begin{equation}\label{eq:Extended}
	\cnorm{Z}_{d}= \bigg( \sum_{\bvec{\pi} \,\vdash\, d} \frac{ \TT_{\bvec{\pi}}(Z)}{z_{\bvec{\pi}}}\bigg)^{\!\!\smash{1/d}}
	\quad \text{for $Z \in \M_n$}.
\end{equation} 
In particular, when $Z\in \H_n$, the above restricts to $\|Z\|_d$ on $\H_n$. More precisely, Theorem \ref{Theorem:Main2}.(f) and \eqref{eq:DefY} imply that for even $d\geq 2$ and $A \in \H_n$,
\begin{equation}\label{eq:StanleyPowerSum}
	h_d(\lambda_1, \lambda_2, \ldots, \lambda_n)
	= \norm{A}_{d}^d=\sum_{\bvec{\pi}\vdash d} y_{\bvec{\pi}} \kappa_{\bvec{\pi}}p_{\bvec{\pi}}
	=\sum_{\bvec{\pi}\vdash d}\frac{p_{\bvec{\pi}}}{z_{\bvec{\pi}}} ,
\end{equation} 
in which $p_{\bvec{\pi}}$ is given by \eqref{eq:pTrace}.
This recovers the combinatorial representation of even-degree CHS polynomials \cite[Prop.~7.7.6]{StanleyBook2}.

\begin{remark}
	Ahmadi, de Klerk, and Hall consider
	norms on $\R^n$ that arise from multivariate homogeneous polynomials   
	\cite[Thm.~2.1]{AKH}. 
	However, the convexity of the even-degree CHS polynomials is difficult to verify directly, so
	their methods do not seem to apply in the present setting.
\end{remark}

\begin{remark}\label{Remark:TriangleIneq}
	The inequality \eqref{eq:Triangle}, which helps to verify the triangle inequality for the CHS norms, 
	has no positivity assumptions.
	For $p \in \N$, the analogous inequality 
	\begin{equation}\label{eq:WeakTriangle}
		h_p(\vec{x}+\vec{y})^{1/p} \leq h_p(\vec{x})^{1/p}+h_p(\vec{y})^{1/p}
		\qquad \text{for $\vec{x},\vec{y} \in \R_{\geq 0}^n$}
	\end{equation}
	has been rediscovered several times.
	McLeod \cite[p.~211] {McLeod} and Whiteley \cite[p.~49]{Whiteley}
	say it was first conjectured by A.C.~Aitken.  Variants of \eqref{eq:WeakTriangle} can be found in \cite{Sra}.
\end{remark}

\smallskip
	
We now establish several properties of the CHS norms. First, we show in Subsection \ref{Subsection:Hunter} how our derivation of the CHS norms allow one to deduce Hunter's theorem (see Section \ref{Section:Hunter}) and generalize it.
Then, in Subsection \ref{Section:Properties}, it is shown how the CHS norm of a Hermitian matrix can be computed rapidly and exactly
from its characteristic polynomial 
and recursion. 
This leads quickly to a determinantal interpretation of these norms in the general case $Z\in \M_n$. 
Next, we precisely identify those CHS norms induced by inner products in Subsection \ref{Subsection:Inner}. Lastly, we discuss equivalence constants with the operator norm in Subsection \ref{Section:Bounds}.

\subsection{A generalization of Hunter's positivity theorem}\label{Subsection:Hunter}
In Subsection \ref{Subsection:CHSExpectation}, we saw that the positivity of the CHS norms 
recovers Hunter's theorem \cite{Hunter} (although the term \emph{CHS norms} was not used at the time). Further examining the way they arise from the random vector norms associated to the Gamma distribution with $\alpha=1$ and $\beta=(d!)^{-1/d}$ allows one to obtain the following theorem, which generalizes Hunter's theorem \cite{Hunter}, corresponding to the case $\alpha = 1$.

\begin{theorem}\label{Theorem:GeneralHunter}
	For even $d\geq 2$ and $\alpha \in \N$, 
	\begin{equation*}
		H_{d,\alpha}(x_1,x_2, \ldots, x_n)= \sum_{\substack{\bvec{\pi}\vdash d\\ |\bvec{\pi}|\leq\alpha}} 
		c_{\bvec{\pi}} h_{\bvec{\pi}}(x_1,x_2,\ldots,x_n),\vspace{-3pt}
	\end{equation*}
	where 
	the sum runs over all partitions $\bvec{\pi}=(\pi_1,\pi_2,\ldots,\pi_r)$ of $d$, is positive definite on $\R^n$. 
	Here $h_{\bvec{\pi}}=h_{\pi_1}h_{\pi_2}\cdots h_{\pi_r}$ is a product of complete homogeneous symmetric 
	polynomials and
	\vspace{-4pt}
	\begin{equation*}
		c_{\bvec{\pi}} = \frac{\alpha !}{ (\alpha-r)! \prod_{i=1}^r m_i!} = \binom{\alpha}{r} \binom{r}{m_1,m_2,\dots,m_r},
	\end{equation*}
	where $r=|\bvec{\pi}|$ denotes the number of parts in $\bvec{\pi}$
	and $m_i$ is the multiplicity of $i$ in $\bvec{\pi}$.
\end{theorem}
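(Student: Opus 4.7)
The plan is to realize $H_{d,\alpha}$ as a random vector norm raised to the $d$th power, using Gamma-distributed random variables with integer shape parameter $\alpha$, and then invoke Theorem \ref{Theorem:Main2}.(a) to deduce positive definiteness. This extends the probabilistic proof of Hunter's theorem in Subsection \ref{Subsection:CHSExpectation}, which is the special case $\alpha=1$ (standard exponential).

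First, fix the Gamma distribution with shape parameter $\alpha \in \N$ and scale $\beta=1$, whose moment generating function is $M(t)=(1-t)^{-\alpha}$. Let $\vec{X}=(X_1,\ldots,X_n)$ be an iid random vector with this distribution and let $A=\diag(\vec{x}) \in \H_n$. Then independence yields
\begin{equation*}
M_\Lambda(t) \;=\; \prod_{i=1}^n (1-x_i t)^{-\alpha} \;=\; \left(\prod_{i=1}^n \frac{1}{1-x_i t}\right)^{\!\alpha},
\end{equation*}
which by the CHS generating function \eqref{eq:CHSGenerating} equals $\big(\sum_{k\geq 0} h_k(\vec{x})\, t^k\big)^\alpha$. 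By Theorem \ref{Theorem:Main2}.(e), for even $d\geq 2$,
\begin{equation*}
\norm{A}_{\vec{X},d}^{\,d} \;=\; d!\cdot [t^d]\, M_\Lambda(t) \;=\; d!\cdot [t^d] \left(\sum_{k\geq 0} h_k(\vec{x})\, t^k\right)^{\!\alpha}.
\end{equation*}

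Next, I would expand the $\alpha$th power and identify the resulting coefficient with $H_{d,\alpha}(\vec{x})$ up to the factor $d!$. The coefficient of $t^d$ is the sum of $h_{k_1}(\vec{x})\cdots h_{k_\alpha}(\vec{x})$ over all weak compositions $(k_1,\ldots,k_\alpha)$ of $d$. Group these compositions by the partition $\bvec{\pi}=(\pi_1,\ldots,\pi_r) \vdash d$ formed by the nonzero $k_i$; since $h_0=1$, each composition contributes $h_{\bvec{\pi}}(\vec{x})$. The number of compositions corresponding to $\bvec{\pi}$ is the count of choosing $r$ positions out of $\alpha$ to host the nonzero entries and then arranging $\pi_1,\ldots,\pi_r$ (with multiplicities $m_i$) among them, namely
\begin{equation*}
\binom{\alpha}{r}\cdot \frac{r!}{\prod_{i} m_i!} \;=\; \frac{\alpha!}{(\alpha-r)!\prod_i m_i!} \;=\; c_{\bvec{\pi}}.
\end{equation*}
Summing over all $\bvec{\pi}\vdash d$ with $r=|\bvec{\pi}|\leq \alpha$ yields exactly $H_{d,\alpha}(\vec{x})$, so $\norm{A}_{\vec{X},d}^{\,d} = d!\cdot H_{d,\alpha}(\vec{x})$.

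Finally, Theorem \ref{Theorem:Main2}.(a) asserts that $\norm{\cdot}_{\vec{X},d}$ is a norm on $\H_n$, so $\norm{A}_{\vec{X},d} > 0$ whenever $A\neq 0$. Every nonzero $\vec{x}\in\R^n$ arises as the eigenvalue list of the nonzero Hermitian matrix $\diag(\vec{x})$, so $H_{d,\alpha}(\vec{x}) > 0$ for all $\vec{x}\neq \vec{0}$. None of the steps appears serious; the only potentially delicate point is the bookkeeping in the combinatorial identification of $c_{\bvec{\pi}}$ as the multiplicity of each monomial $h_{\bvec{\pi}}$, but this is a direct weak-composition count and matches the claimed formula in both displayed forms.
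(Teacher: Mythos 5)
Your proposal is correct and follows essentially the same route as the paper: realize $H_{d,\alpha}$ (up to the harmless factor $d!$, which the paper absorbs via its $\beta=(d!)^{-1/d}$ normalization) as $\norm{\diag(\vec{x})}_{\X,d}^d$ for an iid Gamma$(\alpha,1)$ random vector, expand the $\alpha$th power of the CHS generating function, and conclude positivity from Theorem \ref{Theorem:Main2}. The only cosmetic difference is that the paper packages the coefficient identification through the auxiliary polynomials $P_\ell^{(\alpha)}$ of \eqref{eq:PPalpha}--\eqref{eq:PPCombo}, whereas you count weak compositions directly — the same computation, which you in fact carry out more explicitly.
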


\begin{proof}
	Let $\alpha\in \N$ and define the polynomials $P_{\ell}^{(\alpha)}(x_1, x_2, \ldots, x_{\ell})$ by
	\begin{equation}\label{eq:PPalpha}
		P_0^{(\alpha)}=x_0=1
		\quad \text{and} \quad 
		\Big(1+ \sum_{r=1}^{\infty} x_r t^r \Big)^{\alpha}
		=\sum_{\ell=0}^{\infty}P_{\ell}^{(\alpha)}(x_1, x_2, \ldots, x_{\ell})t^{\ell}.\vspace{-5pt}
	\end{equation} 
	Then
	\begin{equation}\label{eq:PPCombo}
		P_{\ell}^{(\alpha)}( x_1,x_2, \ldots, x_{\ell})
		\,\,=\!\!
		\sum_{\substack{ i_1, i_2, \ldots, i_{\alpha}\leq \ell \\ i_1+i_2+\cdots+i_{\alpha}=\ell}} x_{i_1}x_{i_2}\cdots x_{i_{\alpha}}
		=\sum_{\substack{\bvec{\pi}\vdash \ell \\ |\bvec{\pi}|\leq\alpha}} c_{\bvec{\pi}} x_{\bvec{\pi}}.
	\end{equation}
	Let $\X$ be a random vector whose $n$ components are iid and
	distributed according to \eqref{eq:Gamma} with $\beta = 1$.
	Let $A\in \H_n$ have eigenvalues $x_1,x_2,\ldots,x_n$.
	For even $d \geq 2$,
	\begin{align*}
		\norm{A}_{\X,d}^d
		&\overset{\eqref{eq:GammaMGF}}{=} [t^d] \bigg( \prod_{i=1}^k \frac{1}{1-x_i t} \bigg)^{\!\!\alpha} 
		\overset{\eqref{eq:CHS2}}{=} [t^d] \bigg(1+\sum_{r=1}^{\infty} h_{r}(x_1, x_2, \ldots, x_n) t^{r} \bigg)^{\!\!\alpha} \\
		&\overset{\eqref{eq:PPalpha}}{=} [t^d] \sum_{\ell=0}^{\infty} P_{\ell}^{(\alpha)}(h_1, h_2, \ldots, h_{\ell})t^{\ell} \\
		&\overset{\eqref{eq:PPCombo}}{=} [t^d]\sum_{\ell=0}^{\infty}
		\bigg(
		\sum_{\substack{\bvec{\pi}\vdash \ell \\ |\bvec{\pi}|\leq\alpha}} c_{\bvec{\pi}} h_{\bvec{\pi}}(x_1, x_2, \ldots, x_n) \bigg)t^{\ell}. \\[-21pt]
	\end{align*}
	Hence,
	$
	\sum_{\!\!\substack{\bvec{\pi}\vdash d \\ |\bvec{\pi}|\leq \alpha}} c_{\bvec{\pi}} h_{\bvec{\pi}}(x_1, x_2, \ldots, x_n) 
	= \norm{A}_{\X,d}^d$,
	which is positive definite.
\end{proof}

\begin{corollary}\label{Corollary:Hunter}
	For even $d \geq 2$, the complete symmetric homogeneous polynomial 
	$h_d(x_1,x_2,\ldots,x_n)$ is positive definite.
\end{corollary}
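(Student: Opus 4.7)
The plan is to specialize Theorem \ref{Theorem:GeneralHunter} to the case $\alpha = 1$ and verify that the resulting sum collapses to $h_d$ itself. First, I would observe that the partitions $\bvec{\pi} \vdash d$ satisfying $|\bvec{\pi}| \leq 1$ are precisely those with a single part, i.e.\ $\bvec{\pi} = (d)$. For this unique partition we have $r=1$, $m_d = 1$, and $m_i = 0$ for $i \neq d$, so
\begin{equation*}
c_{(d)} = \binom{1}{1}\binom{1}{0,\ldots,0,1} = 1
\quad \text{and} \quad
h_{(d)}(x_1, x_2,\ldots, x_n) = h_d(x_1, x_2, \ldots, x_n).
\end{equation*}
Therefore $H_{d,1}(x_1, x_2, \ldots, x_n) = h_d(x_1, x_2, \ldots, x_n)$.

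Next, I would invoke Theorem \ref{Theorem:GeneralHunter} with $\alpha = 1$, which asserts that $H_{d,1}$ is positive definite on $\R^n$ for every even $d \geq 2$. Combining this with the identity above immediately yields positive definiteness of $h_d$, as desired.

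There is no real obstacle here; the work has already been done in Theorem \ref{Theorem:GeneralHunter}, whose proof exhibits $h_d(x_1,\ldots,x_n)$ (up to the normalizing constant) as $\norm{A}_{\X,d}^d$ for $A = \diag(x_1,\ldots,x_n)$ and $\X$ a vector of iid standard exponential random variables. The only thing to check is the combinatorial identification of the $\alpha = 1$ case, which is a one-line verification.
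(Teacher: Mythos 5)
Your proposal is correct and matches the paper's intended argument: the corollary is stated as an immediate consequence of Theorem \ref{Theorem:GeneralHunter}, with the text noting that Hunter's theorem is precisely the case $\alpha=1$, where the only partition with $|\bvec{\pi}|\leq 1$ is $(d)$ and $c_{(d)}=1$, so $H_{d,1}=h_d$. Your one-line verification of this specialization is exactly what the paper relies on.
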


Note that if $\alpha = 2$, then we obtain the positive definite symmetric polynomial
$H_{d,2}(x_1,x_2, \ldots, x_n)= \sum_{i=0}^d h_i (x_1,x_2, \ldots, x_n) h_{d-i}(x_1,x_2, \ldots, x_n)$. More generally, we have the relation
$
\sum_{\ell=0}^{\infty} H_{\ell, \alpha}t^{\ell}=(\sum_{\ell=0}^{\infty} h_{\ell}t^{\ell})(\sum_{\ell=0}^{\infty} H_{\ell, \alpha-1}t^{\ell}).
$
This implies that the sequence $\{H_{d,\alpha}\}_{\alpha\geq 1}$ satisfies the recursion \vspace{-4pt}
\begin{equation}
	H_{d,\alpha}=\sum_{i=0}^d h_i H_{d-i, \alpha-1}.\label{eq:HRecursion}
\end{equation}
For example, let $j=4$ and $\alpha=3$. There are four partitions $\bvec{\pi}$ of $j$ with $|\bvec{\pi}|\leq 3$. These are $(1,1,2)$,  $(1,3)$, $(2,2)$ and $(4)$. 
Therefore,
\begin{align*}
	H_{4,3}(x_1, x_2, x_3,x_4)&=c(1,1,2)h_1^2h_2+c(1,3)h_1h_3+c(2,2)h_2^2+c(4)h_4\\
	&=\frac{3!}{0! 2! 1!}h_1^2h_2+\frac{3!}{1! 1! 1!}h_1h_3+\frac{3!}{1! 2!}h_2^2+\frac{3!}{2!1!}h_4\\[2pt]
	&=3h_1^2h_2+6h_1h_3+3h_2^2+3h_4
\end{align*} 
is a positive definite symmetric polynomial. In light of \eqref{eq:HRecursion}, we can also write
$H_{4,3}(x_1, x_2, x_3,x_4)=\sum_{i=0}^4 h_i H_{4-i, 2}=H_{4,2}+h_1H_{3,2}+h_2H_{2,2}+h_3H_{1,2}+h_4$.

\subsection{Exact computation via characteristic polynomial}\label{Section:Properties}

In this section, it is shown how the CHS norm of a Hermitian matrix can be computed rapidly and exactly from its characteristic polynomial and recursion. This then leads to a determinantal formula for the CHS norm of a general matrix. The following theorem, which is \cite[Thm.~20]{Aguilar}, involves only formal series manipulations.

\begin{theorem}\label{Theorem:Characteristic}
Let $p_A(x)$ denote the characteristic polynomial of $A \in \H_n$.
For even $d\geq 2$, $\norm{A}_d^d$ is the coefficient of $x^d$ in the Taylor expansion about the origin of 
\begin{equation*}
\frac{1}{\det(I - x A)} = \frac{1}{ x^n p_A(1/x)}.
\end{equation*}
\end{theorem}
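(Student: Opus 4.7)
The plan is to identify $\|A\|_d^d$ as a specific coefficient of a generating function whose closed form is the inverse of the reciprocal characteristic polynomial. The key ingredient is the generating function \eqref{eq:CHSGenerating} for the CHS polynomials, which says that
\[
\sum_{k=0}^{\infty} h_k(\lambda_1,\lambda_2,\ldots,\lambda_n)\,x^k = \prod_{i=1}^{n}\frac{1}{1-\lambda_i x}.
\]
Since $A$ is Hermitian with eigenvalues $\lambda_1,\lambda_2,\ldots,\lambda_n$, by definition \eqref{Theorem:MainCHS} of the CHS norm we have $\|A\|_d^d = h_d(\lambda_1,\lambda_2,\ldots,\lambda_n)$, so this polynomial equals the coefficient $[x^d]$ of the product above.

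The next step is to rewrite the product $\prod_{i=1}^n (1-\lambda_i x)^{-1}$ as the reciprocal of a matrix determinant. The eigenvalues of $I-xA$ are $1-x\lambda_i$, so
\[
\det(I-xA) = \prod_{i=1}^{n}(1-\lambda_i x).
\]
The identification with the characteristic polynomial is equally direct: writing $p_A(y) = \det(yI - A) = \prod_{i=1}^n (y-\lambda_i)$ and substituting $y = 1/x$ yields
\[
x^n p_A(1/x) = x^n \prod_{i=1}^{n}\bigl(\tfrac{1}{x}-\lambda_i\bigr) = \prod_{i=1}^{n}(1-\lambda_i x),
\]
so the two proposed expressions for the denominator coincide. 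Combining these identities gives
\[
\sum_{k=0}^{\infty} h_k(\bvec{\lambda})\,x^k = \frac{1}{\det(I-xA)} = \frac{1}{x^n p_A(1/x)},
\]
valid as a formal power series (and, analytically, for $|x| < 1/\max_i |\lambda_i|$ when $A\neq 0$).

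Extracting the coefficient of $x^d$ and invoking $\|A\|_d^d = h_d(\bvec{\lambda})$ completes the proof. There is essentially no obstacle here: the argument is purely formal and rests on the already-established generating function for the CHS polynomials plus the elementary factorization of $\det(I-xA)$ via the eigenvalues of $A$. The only subtlety worth flagging, if desired, is that the Taylor expansion about the origin is well-defined because $\det(I-xA)$ has constant term $1$, so its reciprocal is a bona fide formal power series (with positive radius of convergence whenever $A\neq 0$).
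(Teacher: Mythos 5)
Your proof is correct and follows essentially the same route as the paper: both arguments combine the CHS generating function \eqref{eq:CHSGenerating} with the factorizations $\det(I-xA)=\prod_i(1-\lambda_i x)=x^n p_A(1/x)$ obtained from the spectral theorem, and then extract the coefficient of $x^d$ using $\norm{A}_d^d=h_d(\bvec{\lambda}(A))$.
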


\begin{proof}
Let $p_A(x) = (x-\lambda_1)(x-\lambda_2)\cdots ( x - \lambda_n)$.
For $|x|$ small, we have
\begin{equation*}
\prod_{k=1}^n \frac{1}{1-\lambda_k x} 
= \frac{1}{x^n}\prod_{k=1}^n \frac{1}{x^{-1}-\lambda_k } 
= \frac{1}{x^n p_A(1/x)} ;
\end{equation*}
the apparent singularity at the origin is removable.   Now observe that
\begin{equation*}
\prod_{k=1}^n \frac{1}{1-\lambda_k x}  = \frac{1}{\det \diag(1 - \lambda_1 x,1 - \lambda_2x,  \ldots, 1 - \lambda_n x)} = \frac{1}{\det(I - xA)}
\end{equation*}
by the spectral theorem. The results then follows from \eqref{eq:CHSGenerating} and \eqref{eq:CHS2}.
\end{proof}

\begin{example}\label{Example:Fibo}
Let $A =\left[\begin{smallmatrix}
	1&1\\1&0
\end{smallmatrix}\right]$.  Then $p_A(z) = x^2 - x - 1$ and
\begin{equation*}
\frac{1}{x^2 p_A(1/x)} = \frac{1}{1 -x-x^2} = \sum_{n=0}^{\infty} f_{n} x^n,
\end{equation*}
in which $f_n$ is the $n$th Fibonacci number, defined by 
$f_{n+2} = f_{n+1} + f_n$ and $f_0 = f_1 = 1$.  Consequently,
$\norm{A}_d^d = f_{d}$ for even $d \geq 2$.  
\end{example}

If $A \in \H_n$ is fixed,
the sequence $h_d(\lambda_1,\lambda_2,\ldots,\lambda_n)$ satisfies the Newton--Gerard identities, defined in \eqref{eq:CHSPowerSumRec}, which states that
\begin{equation*}
	h_d(x_1,x_2,\ldots,x_n) =  \frac{1}{d} \sum_{i=1}^d h_{d-i}(x_1,x_2,\ldots,x_n) (x_1^i+x_2^i+\cdots+x_n^i).
\end{equation*} 
For even $d \geq 2$, it follows that
\begin{equation}\label{eq:Recursive}
	h_d(\bvec{\lambda}(A)) = \frac{1}{d} \sum_{i=1}^d h_{d-i}(\bvec{\lambda}(A))  \tr (A^i),
\end{equation}
which can be used to easily compute $\norm{A}_d^d = h_d( \bvec{\lambda}(A))$ recursively. If in addition $d$ is small, then there is an even simpler method. 
Since $p_A(x)$ is monic,  $\widetilde{p_A}(x) = x^n p_A(1/x)$ has constant term $1$. Thus,
\begin{equation*}
	\sum_{d=0}^{\infty} h_d(\lambda_1,\lambda_2,\ldots,\lambda_n) x^d 
	= \frac{1}{\widetilde{p_A}(x) } = \frac{1}{1 - (1-\widetilde{p_A}(x))} = \sum_{d=0}^{\infty} (1-\widetilde{p_A}(x))^d.
\end{equation*}
The desired $h_d(\lambda_1,\lambda_2,\ldots,\lambda_n)$ can then be computed by simply expanding the geometric series above
to the appropriate degree. Theorem \ref{Theorem:Characteristic} leads to the following general determinantal interpretation, which is \cite[Thm.~26]{Aguilar}.

\begin{theorem}\label{Theorem:Determinant}
	Let $Z\in\M_n$. For even $d\geq 2$, $\binom{d}{d/2} \cnorm{Z}_d^d$ is the coefficient of $z^{d/2}\overline{z}^{d/2}$ in the formal series expansion of $\det(I-zZ-\overline{z}Z^*)^{-1}$ about the origin.
\end{theorem}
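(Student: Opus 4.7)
The plan is to combine Theorem \ref{Theorem:Characteristic}, which handles the Hermitian case via the characteristic polynomial, with the averaging formula \eqref{eq:NormComplex} that defines $\cnorm{\cdot}_d$ on $\M_n$, and then perform a Fourier-diagonal extraction.

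First I would start from
\begin{equation*}
\binom{d}{d/2}\cnorm{Z}_d^d \;=\; \frac{1}{2\pi}\int_0^{2\pi} \norm{e^{it}Z + e^{-it}Z^*}_d^d \,\mathrm{d}t,
\end{equation*}
as guaranteed by \eqref{eq:NormComplex}. For each fixed $t\in[0,2\pi]$ the matrix $M_t := e^{it}Z+e^{-it}Z^*$ lies in $\H_n$, so Theorem \ref{Theorem:Characteristic} applies and yields
\begin{equation*}
\norm{M_t}_d^d \;=\; [x^d]\,\frac{1}{\det(I-xM_t)}.
\end{equation*}
Because the right-hand side is a formal power series in $x$ whose coefficients depend polynomially on $\cos t, \sin t$, the operations of integrating over $t$ and extracting the coefficient of $x^d$ commute, giving
\begin{equation*}
\binom{d}{d/2}\cnorm{Z}_d^d \;=\; [x^d]\, \Phi(x), \qquad \Phi(x) := \frac{1}{2\pi}\int_0^{2\pi}\frac{\mathrm{d}t}{\det\!\big(I-x(e^{it}Z+e^{-it}Z^*)\big)}.
\end{equation*}

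Next I would expand the two-variable formal series
\begin{equation*}
G(z,\overline z) \;:=\; \frac{1}{\det(I-zZ-\overline z Z^*)} \;=\; \sum_{j,k\geq 0} a_{jk}\, z^j\overline z^k \;\in\; \C[[z,\overline z]],
\end{equation*}
which is well defined as a formal power series since $\det(I-zZ-\overline z Z^*)$ has constant term $1$ (equivalently, expand geometrically via $G=\sum_{r\geq 0}(zZ+\overline zZ^*)^r$ under trace/determinant manipulations). Substituting $z=xe^{it}$ and $\overline z=xe^{-it}$ identifies the integrand of $\Phi$ with $G(xe^{it},xe^{-it}) = \sum_{j,k}a_{jk}x^{j+k}e^{i(j-k)t}$. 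Averaging over $t$ annihilates every term with $j\neq k$, so
\begin{equation*}
\Phi(x) \;=\; \sum_{j\geq 0} a_{jj}\, x^{2j}.
\end{equation*}
Extracting the coefficient of $x^d$ (with $d=2\cdot d/2$ even) therefore gives $a_{d/2,\,d/2}$, i.e.
\begin{equation*}
\binom{d}{d/2}\cnorm{Z}_d^d \;=\; a_{d/2,\,d/2} \;=\; [z^{d/2}\overline z^{d/2}]\,\frac{1}{\det(I-zZ-\overline z Z^*)},
\end{equation*}
which is the claimed identity.

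The main obstacle, and the one needing a careful word or two, is the legitimacy of the two formal manipulations: (i) interchanging the $t$-integral with the $[x^d]$ operator, and (ii) identifying the integrand with the substitution $z\mapsto xe^{it}$, $\overline z\mapsto xe^{-it}$ into the bivariate formal series $G$. Both are routine once one observes that the coefficient of $x^d$ in $\det(I-xM_t)^{-1}$ is a trigonometric polynomial in $t$ of bounded degree (indeed a polynomial in $e^{\pm it}$ of degree at most $d$), so the integral picks off its constant Fourier coefficient, which matches the diagonal coefficients $a_{jj}$ of $G$. No convergence issues arise because everything is finite at the level of the coefficient of $x^d$ for each fixed $d$.
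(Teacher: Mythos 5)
Your argument is correct and follows essentially the same route as the paper: the paper combines Theorem \ref{Theorem:Characteristic} with Remark \ref{Remark:Czz}, and that remark is precisely the circle-averaging/diagonal-coefficient extraction (via the constant-term argument of Lemma \ref{Lemma:Expr}) that you carry out explicitly on the series $\det(I-zZ-\overline{z}Z^*)^{-1}$ after substituting $z=xe^{it}$, $\overline z=xe^{-it}$. In effect you re-derive the content of Remark \ref{Remark:Czz} instead of citing it, and your justification of the formal interchanges (each coefficient of $x^d$ being a trigonometric polynomial in $t$) is sound.
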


\begin{proof}
If $H \in \H_n$, the coefficient of $x^d$ in $\det(I-xH)^{-1}$ is $\norm{H}_d^d$ by Theorem \ref{Theorem:Characteristic}. 
By plugging in $H=zZ+\overline{z}Z^*$ and treating the resulting expression as a series in $z$ and $\overline{z}$, Remark \ref{Remark:Czz} 
implies that the coefficient of $z^{d/2}\overline{z}^{d/2}$ equals $\binom{d}{d/2} \cnorm{Z}_d^d$.
\end{proof}

\begin{remark}
	Polynomials of the form $p=\det(I-zA-\overline{z}A^*)\in\C[z,\overline{z}]$
	are the real-zero polynomials in $\C[z,\overline{z}]$ \cite{HV}.  They are characterized by the conditions
	$p(0)=1$ and that $x\mapsto p(\alpha x)$ has only real zeros for every $\alpha\in\C$. 
	Such polynomials are studied in the context of hyperbolic \cite{Ren} and stable polynomials \cite{Wag}.
\end{remark}

\begin{example}
Let $Z= \left[\begin{smallmatrix}
	0&1\\0&0
\end{smallmatrix}\right]$.  Then
\begin{equation*}
\det(I-zZ-\overline{z}Z^*)^{-1} = \frac{1}{1-\overline{z} z} = \smash{ \sum_{n=0}^{\infty} \overline{z}^n z^n},
\end{equation*}
and hence $\norm{Z}_d^d = \smash{\binom{d}{d/2}^{\!-1}}$ for even $d \geq 2$.
\end{example}

\begin{example}
For $Z=\left[\begin{smallmatrix}
	0 & 1 & 0 \\
	0 & 0 & 1 \\
	1 & 0 & 0
\end{smallmatrix}\right]$, we have \vspace{-3pt}
\begin{equation*}
\det(I-zZ-\overline{z}Z^*)^{-1} = 
\frac{1}{1 -z^3 -3 z \overline{z}- \overline{z}^3}.
\end{equation*}
Computer algebra reveals that $\cnorm{Z}_2^2 = \cnorm{Z}_4^4 = \frac{3}{2}$, 
$\cnorm{Z}_6^6 = \frac{29}{20}$, and
$\cnorm{Z}_8^8 = \frac{99}{70}$.
\end{example}

\begin{example}
The matrices $Z_1=\left[\begin{smallmatrix}
	0 & 0 & 0 \\
	0 & 1 & i \\
	0 & i & -1
\end{smallmatrix}\right]$ and $Z_2=\left[\begin{smallmatrix}
0 & 0 & 1 \\
0 & 0 & i \\
1 & i & 0
\end{smallmatrix}\right]$ satisfy
\begin{equation*}
\frac{1}{\det(I-zZ_1-\overline{z}Z_1^*)} = \frac{1}{1 - 4 z \overline{z}} =  \frac{1}{\det(I-zZ_2-\overline{z}Z_2^*)},
\end{equation*}
so $\cnorm{Z_1}_d = \cnorm{Z_2}_d$ for even $d\geq 2$.
These matrices are not similar (let alone unitarily similar)
since $Z_1$ is nilpotent of order two and $Z_2$ is nilpotent of order three.
\end{example}

\subsection{Inner products}\label{Subsection:Inner}
Theorem \ref{Theorem:Main2}.(g) says that $\cnorm{\cdot}_d$, defined in \eqref{eq:Extended}, is a norm on $\M_n$ for even $d\geq 2$.  
When are these norms are induced by an inner product? The following is \cite[Thm.~31]{Aguilar}

\begin{theorem}\label{Theorem:Inner}
The norm $\cnorm{\cdot}_d$ on $\M_n$ (and its restriction to $\H_n$) is induced by an inner product if and only if
$d=2$ or $n=1$
\end{theorem}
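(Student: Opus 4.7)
My plan is to use the parallelogram law: a norm $\|\cdot\|$ on a real or complex vector space is induced by an inner product if and only if $\|x+y\|^2+\|x-y\|^2 = 2\|x\|^2+2\|y\|^2$ holds identically. Since $\H_n$ is a real-linear subspace of $\M_n$, any inner product on $\M_n$ restricts to one on $\H_n$; hence, violating the parallelogram law on $\H_n$ will rule out inner-product origin for both norms simultaneously, and I only need the $n=1$ case to be handled separately on $\M_1=\C$.

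For the ``if'' direction, the case $d=2$ follows from Example~\ref{Ex:d=2}: the Gamma distribution with $\alpha=1$ and $\beta=(2!)^{-1/2}=1/\sqrt{2}$ has $\mu^2=\sigma^2=\tfrac{1}{2}$, giving $\cnorm{Z}_2^2 = \tfrac{1}{2}\|Z\|_{\operatorname{F}}^2 + \tfrac{1}{2}|\tr Z|^2$, which is induced by the (Hermitian, positive-definite) sesquilinear form $\langle Z,W\rangle = \tfrac{1}{2}\tr(ZW^*)+\tfrac{1}{2}\tr(Z)\overline{\tr(W)}$. For $n=1$, writing $z=re^{i\varphi}\in\C$, the single-variable reduction $\|r\|_d=|r|$ for $r\in\R$ combined with the identity $\int_0^{2\pi}|2\cos t|^d\,\mathrm{d}t = 2\pi\binom{d}{d/2}$ (from the proof of Proposition~\ref{p:gennorm}) simplifies \eqref{eq:NormComplex} directly to $\cnorm{z}_d=|z|$, the Euclidean norm on $\C$, which is induced by the standard inner product.

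For the ``only if'' direction, suppose $d\geq 4$ is even and $n\geq 2$; I will produce $A,B\in\H_n$ violating the parallelogram law. Take $A=\diag(1,0,\dots,0)$ and $B=\diag(0,1,0,\dots,0)$. Using the generating function \eqref{eq:CHSGenerating}, I compute $h_d(1,0,\dots,0)=1$, $h_d(1,1,0,\dots,0)=d+1$, and $h_d(1,-1,0,\dots,0)=[t^d](1-t^2)^{-1}=1$ (where the last equality uses that $d$ is even). Hence $\|A\|_d=\|B\|_d=1$, $\|A+B\|_d=(d+1)^{1/d}$, and $\|A-B\|_d=1$, so the parallelogram identity reduces to the Diophantine statement $(d+1)^{2/d}+1=4$, equivalently $(d+1)^2=3^d$. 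This holds for $d=2$ (giving $9=9$) but fails for every even $d\geq 4$, since $3^d$ grows exponentially while $(d+1)^2$ is polynomial.

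I expect no serious obstacle; the argument is essentially a computation on a carefully chosen pair of rank-one diagonal matrices. The only point requiring care is tracking the normalization $\beta=(d!)^{-1/d}$ (Remark~\ref{Rem:redef}) when instantiating the $d=2$ formula \eqref{eq:d=2} to get $\mu^2=\sigma^2=\tfrac{1}{2}$, and confirming that the Diophantine comparison $(d+1)^2$ versus $3^d$ indeed isolates $d=2$ among even integers.
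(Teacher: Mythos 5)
Your proposal is correct and follows essentially the same route as the paper: the same parallelogram-law test applied to $A=\diag(1,0,\ldots,0)$ and $B=\diag(0,1,0,\ldots,0)$, yielding the same equation $(d+1)^2=3^d$, with the same treatment of the cases $d=2$ and $n=1$. The only cosmetic difference is that you compute $h_d(1,1,0,\ldots,0)=d+1$ directly from the generating function, whereas the paper invokes the additivity of gamma random variables; both give the identical value.
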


\begin{proof}
If $n=1$ and $d \geq 2$ is even, then $\cnorm{Z}_d$ is a fixed positive multiple of $|z|$ for all $Z = [z] \in \M_1$.
Consequently, $\cnorm{\cdot}_d$ is induced by a positive multiple of the inner product
$\inner{Z_1,Z_2} = \overline{z_2}z_1$, in which $Z_1=[z_1]$ and $Z_2 = [z_2]$.

If $d=2$ and $n \geq 1$, then we have $\cnorm{Z}_2^2 = \tfrac{1}{2}\tr(A^*A) + \tfrac{1}{2} \tr(A^*)\tr (A) $ (see Example~\ref{Ex:d=2}),
which one can see is induced by the inner product
$\inner{A,B} = \tfrac{1}{2}\tr(B^*A) + \tfrac{1}{2}\tr (B^*)\tr (A)$.  

We must show that in all other cases,
the norm $\cnorm{A}_{d}=( h_{d}( \bvec{\lambda}(A)) )^{1/d}$ on $\H_n$ does not arise from an inner product. 
For $n \geq 2$, let $A = \diag(1,0,0,\ldots)$ and $B= \diag(0,1,0,\ldots,0) \in \H_n$.  Then
$\cnorm{A}_d^2 = \cnorm{B}_d^2 = 1$.
Using \eqref{eq:NormHermitian} and the fact that the sum of gamma random variables is again a gamma random variable, observe that
$\cnorm{A+B}_d^2 = (d+1)^{2/d}$. A similar argument also shows that
$\cnorm{A-B}_d^2 = 1$.
A classical result of Jordan and von Neumann \cite{Jordan} says that a vector-space norm $\norm{\cdot}$ arises from an inner product
if and only if it satisfies the parallelogram identity
$\norm{ \vec{x} + \vec{y}}^2 + \norm{ \vec{x} - \vec{y} }^2
= 2 \big( \norm{ \vec{x} }^2 + \norm{ \vec{y} }^2 )$
for all $\vec{x}, \vec{y}$.
If $\norm{\cdot}_d$ satisfies the parallelogram identity, then
$(d+1)^{2/d} + 1 = 2(1+1)$;
that is, $(d+1)^2 = 3^d$.  The solutions are $d=0$ (which does not yield an inner product) and $d=2$ (which, as we showed above, does).
Therefore, for $n \geq 2$ and $d\geq 2$,
the norm $\norm{\cdot}_d$ on $\H_n$ does not arise from an inner product, and so neither does $\cnorm{\cdot}_d$.
\end{proof}

\vspace{-11pt}
\subsection{Equivalence constants}\label{Section:Bounds}
\vspace{-4pt}

Any two norms on a finite-dimensional vector space are equivalent, so each norm $\norm{\cdot}_d$ on $\H_n$ is equivalent to the operator norm $\onorm{\cdot}$, which is equal to the biggest singular value. We compute admissible equivalence constants below \cite[Thm.~38]{Aguilar}.

\begin{theorem}\label{Theorem:Constants}
For $A \in \H_n$ and even $d \geq 2$, 
\vspace{-4pt}
\begin{equation*}
\bigg(\frac{1}{2^{\frac{d}{2}} (\frac{d}{2})!}\bigg)^{\!\!1/d}\!\onorm{A} 
\leq \norm{A}_d 
\leq \binom{n+d-1}{d}^{\!\!1/d}\!\onorm{A} .\vspace{-4pt}
\end{equation*}
The upper inequality is sharp if and only if $A$ is a multiple of the identity.
\end{theorem}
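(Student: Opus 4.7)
The plan is to work directly with the identity $\norm{A}_d^d = h_d(\lambda_1,\lambda_2,\ldots,\lambda_n)$ from \eqref{eq:CHS2}, so that both inequalities become statements about the CHS polynomial evaluated at the eigenvalues of $A$. Since $\onorm{A} = \max_i |\lambda_i|$ for a Hermitian matrix, everything reduces to bounds on $h_d(\vec{\lambda})$ in terms of $\max_i |\lambda_i|$.

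For the upper bound, I would observe that $h_d$ has $\binom{n+d-1}{d}$ summands, each a degree-$d$ monomial in the $\lambda_i$. A two-step estimate gives
\[
h_d(\vec{\lambda}) \;\leq\; h_d(|\lambda_1|,\ldots,|\lambda_n|) \;\leq\; h_d(\onorm{A},\ldots,\onorm{A}) \;=\; \tbinom{n+d-1}{d}\onorm{A}^d,
\]
where the first inequality uses that every monomial satisfies $\lambda_{i_1}\cdots\lambda_{i_d}\leq |\lambda_{i_1}|\cdots|\lambda_{i_d}|$, and the second uses that $h_d$ has nonnegative coefficients and $|\lambda_i|\leq \onorm{A}$. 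Taking $d$th roots yields the upper bound. Sharpness: if $\onorm{A}=0$, then $A=0$; otherwise, equality throughout forces $|\lambda_i|=\onorm{A}$ for every $i$ and $\lambda_{i_1}\cdots\lambda_{i_d}\geq 0$ for every monomial. The latter (applied to monomials like $\lambda_i\lambda_j\lambda_k^{d-2}$) forces all $\lambda_i$ to have the same sign, so $\bvec{\lambda}=c(1,1,\ldots,1)$ for some $c\in\R$, and hence $A=cI$. Conversely, a direct calculation confirms equality for $A=cI$.

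For the lower bound, I would apply Hunter's refined estimate Theorem \ref{Theorem:HunterBound}, which says $h_d(\vec{x})\geq \frac{1}{2^{d/2}(d/2)!}$ on the unit sphere of $\R^n$. Since $h_d$ is homogeneous of degree $d$, this scales to
\[
h_d(\vec{\lambda}) \;\geq\; \frac{1}{2^{d/2}(d/2)!}\bigl(\lambda_1^2+\lambda_2^2+\cdots+\lambda_n^2\bigr)^{d/2}.
\]
Dropping all but the largest term gives $\sum_i \lambda_i^2 \geq \max_i \lambda_i^2 = \onorm{A}^2$, and therefore
\[
\norm{A}_d^d = h_d(\vec{\lambda}) \;\geq\; \frac{1}{2^{d/2}(d/2)!}\,\onorm{A}^d,
\]
which is the desired inequality.

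There is no real obstacle here; both bounds reduce to monotonicity/scaling properties of $h_d$ once one invokes Hunter's sphere estimate. The only place that requires a little care is the sharpness analysis for the upper bound, where one must check that $\lambda_{i_1}\cdots\lambda_{i_d}=|\lambda_{i_1}|\cdots|\lambda_{i_d}|$ for \emph{every} monomial of degree $d$ really does force a common sign on the nonzero $\lambda_i$; this is handled by testing the mixed monomials $\lambda_i\lambda_j\lambda_k^{d-2}$ (which appear in $h_d$ for $d\geq 2$).
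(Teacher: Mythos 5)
Your proposal is correct and follows essentially the same route as the paper: the upper bound via the monotonicity chain $h_d(\bvec{\lambda})\leq h_d(|\lambda_1|,\ldots,|\lambda_n|)\leq h_d(\onorm{A},\ldots,\onorm{A})=\binom{n+d-1}{d}\onorm{A}^d$, and the lower bound via Hunter's sphere estimate (Theorem \ref{Theorem:HunterBound}) combined with the fact that the Frobenius norm dominates the operator norm. Your sharpness analysis is in fact slightly more careful than the paper's one-line equality claim, since it correctly accommodates the case of a negative multiple of the identity; otherwise the arguments coincide.
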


\begin{proof}
For $A \in \H_n$ and even $d\geq 2$, the triangle inequality yields
\vspace{-4pt}
\begin{align*}
\norm{A}_d^{d}
&=  h_{d}( \lambda_1(A),\lambda_2(A),\ldots,\lambda_n(A) )  \\
&\leq  h_{d}( |\lambda_1(A)| , |\lambda_2(A)|,\ldots ,|\lambda_n(A)| )  \\
&\leq  h_{d}( \onorm{A}, \onorm{A},\ldots, \onorm{A} ) \\
&= \onorm{A}^{d} \,h_{d}(1,1,\ldots,1) \\
&=\onorm{A}^{d} \binom{n+d-1}{d} .\\[-20pt]
\end{align*}
Equality occurs if and only if $\lambda_i(A) = |\lambda_i(A)| = \onorm{A}$ for $1 \leq i \leq n$;
that is, if and only if $A$ is a multiple of the identity.
Now, Hunter established that 
\vspace{-4pt}
\begin{equation*}
h_{2p}(\vec{x}) \geq \frac{1}{2^p p!} \norm{\vec{x}}^{2p},\vspace{-2pt}
\end{equation*}
in which $\norm{ \vec{x} }$ denotes the Euclidean norm of $\vec{x} \in \R^n$ (see Theorem \ref{Theorem:HunterBound}).
Let $d=2p$ and conclude 
\vspace{-7pt}
\begin{equation*}
\norm{A}_d \geq  \bigg( \frac{1}{2^{\frac{d}{2}} (\frac{d}{2})!} \bigg)^{\!\!1/d} \!\norm{A}_{\operatorname{F}}
\geq  \bigg( \frac{1}{2^{\frac{d}{2}} (\frac{d}{2})!} \bigg)^{\!\!1/d} \!\onorm{A},\vspace{-3pt}
\end{equation*}
in which $\norm{A}_{\operatorname{F}}$ denotes the Frobenius norm of $A \in \H_n$.
\end{proof}

\begin{remark}
For $Z \in \M_n$, the upper bound in Theorem \ref{Theorem:Constants}.(b) may be applied to
$e^{it} Z + e^{-it} Z^*$ to deduce that
\vspace{-4pt}
\begin{equation*}
\cnorm{Z}_d \leq
\Bigg(\frac{ \binom{n+d-1}{d}}{2\pi {\binom{d}{d/2}}}\int_0^{2\pi} 
\onorm{ e^{it} Z+e^{-it} Z^*}^d\,\mathrm{d}t\Bigg)^{\!\!1/d}
\!\!\leq 2  \Bigg(\frac{\binom{n+d-1}{d}}{ {\binom{d}{d/2}}}\Bigg)^{\!\!1/d} \! \onorm{Z}.\vspace{-2pt}
\end{equation*}
\end{remark}

\begin{remark}
Hunter's lower bound was improved by Baston \cite{Baston}, who proved that
\vspace{-4pt}
\begin{equation*}
h_{2p}(\vec{x}) \geq \frac{1}{2^p p!} \bigg( \sum_{i=1}^n x_i^2 \bigg)^{\!\!p} + \gamma_p \bigg( \sum_{i=1}^n x_i \bigg)^{\!\!2p}\vspace{-4pt}
\end{equation*}
for $\vec{x} =(x_1,x_2,\ldots,x_n) \in \R^n$,
in which $\gamma_p = \frac{1}{n^p}( \binom{n+2p-1}{2p} \frac{1}{n^p} - \frac{1}{2^p p!} ) > 0$.
Equality holds if and only if $p=1$ or $p\geq 2$ and the $x_i$ are all equal.
\end{remark}


\vspace{-6pt}
\section{Open Questions}\label{Section:Questions}
\vspace{-2pt}

In Section \ref{Section:Submultiplicativity} it was shown that if $\norm{\cdot}$ is a norm on $\M_n$, then there is a scalar multiple of it (which may depend upon $n$)
that is submultiplicative. In the same section, we showed that if the coefficients $X_i$ of the random vector $\X$ satisfy $X_i\in L^{\max\{d,\eta\}}(\Omega,\mathcal{F},\P)$ for some $\eta>2$, then $\cnorm{\cdot}_{\X,d}$ is submultiplicative when multiplied by a constant independent of $n$. For example, Remark \ref{Remark:d=2} 
ensures that the random vector norm of order $2$ is submultiplicative if and only if $2\sigma^2 \geq 1+\sqrt{1+4\mu^2}$.

However, the example in Subsection \ref{Subsection:a-stable} shows that this hypothesis is not necessary. This naturally leads to the following problem.

\smallskip\noindent\textbf{Problem 1.} Characterize those $\X$ that give rise to submultiplicative norms. In particular, what are the best constants $\gamma_d$, independent of $n$, such that $\gamma_d \norm{ \cdot }_{\X,d}$ is submultiplicative?

\smallskip
The result of Theorem \ref{Theorem:Main2}.(c) suggests that it might be better in some situations to consider certain scalar multiples of the random vector norms instead of the norm itself. In particular, as was detailed above, it is natural to wonder when these ``good'' scalars  are independent of $n$. For instance, the coefficient in \eqref{Eq:alpha} is independent of $n$, but when $d\to \infty$, this coefficient tends to 0. Consequently, the random vector norms have no natural meaning behind the expression $\cnorm{\cdot}_{\X,\infty}$, unlike other norms like the operator and Schatten norms. Hence, we naturally propose the following question.

\smallskip\noindent\textbf{Problem 2.} 
Characterize those $\X$ that give rise to norms $\|\cdot\|_{\X,d}$ which, under multiplication by a scalar $\gamma_d$ independent of $n$, remain a norm when $d\to\infty$.

\smallskip
Examining the definition of random vector norms, one might wonder how much these can be generalized. In particular:

\smallskip\noindent\textbf{Problem 3.} 
Generalize random vector norms to compact operators on infinite-dimensional Hilbert spaces.

\smallskip
For the standard exponential distribution, Theorem \ref{Theorem:Inner} provides an answer to the next question.
An answer to the question in the general setting eludes us.

\smallskip\noindent\textbf{Problem 4.} 
Characterize the norms $\cnorm{\cdot}_{\X,d}$ that arise from an inner product.

\smallskip
Similarly, for the standard exponential distribution, we provided equivalence constants between the random vector norms and the operator norm in Theorem \ref{Theorem:Constants}. However, we have not been able to show that the lower bound is sharp.

\smallskip\noindent\textbf{Problem 5.} 
Find a sharp lower bound in Theorem \ref{Theorem:Constants}.

\smallskip
Other unsolved questions come to mind.

\smallskip\noindent\textbf{Problem 6.} 
Identify the extreme points with respect to random vector norms.

\smallskip\noindent\textbf{Problem 7.} 
Characterize norms on $\M_n$ or $\H_n$ that arise from random vectors.

\smallskip
Lastly, we emphasize that if one uses \eqref{eq:Extended} to evaluate $\cnorm{A}_{\X,d}^d$, there are many repeated terms. For example, $(\tr A^*A)(\tr A)(\tr A^*) = (\tr AA^*)(\tr A^*)(\tr A)$ because of the cyclic invariance of the trace and the commutativity of multiplication. If one chooses a single representative for each such class of expressions and simplifies,
one gets expressions such as \eqref{eq:d=2} and \eqref{eq:Z44}. This leads to the problem:

\smallskip\noindent\textbf{Problem 8.}
Give a combinatorial interpretation of the resulting coefficients in \eqref{eq:Extended}.

\medskip
For motivation, the reader is invited to consider
{\small
\begin{align*}
	\cnorm{Z}_6^6 = \smash{\frac{1}{720}}&\smash{\Big(} (\tr Z)^3 \tr(Z^*)^3 + 3 \tr(Z) \tr(Z^*)^3 \tr(Z^2) + 9 (\tr Z)^2 \tr(Z^*)^2 \tr(Z^*Z) \\ 
	& + 9 \tr(Z^*)^2 \tr(Z^2) \tr(Z^*Z) +  18 \tr(Z) \tr(Z^*) \tr(Z^*Z)^2 + 2 \tr(Z^*)^3 \tr(Z^3)  \\
	&  + 3 (\tr Z)^3 \tr(Z^*) \tr(Z^{*2}) + 9 \tr(Z) \tr(Z^*) \tr(Z^2) \tr(Z^{*2}) + 6 \tr(Z^*Z)^3 \\ 
	& + 9 (\tr Z)^2 \tr(Z^*Z) \tr(Z^{*2})+ 9 \tr(Z^2) \tr(Z^*Z) \tr(Z^{*2}) + 2 (\tr Z)^3 \tr(Z^{*3}) \\ 
	&+ 6 \tr(Z^*) \tr(Z^{*2}) \tr(Z^3) + 18 \tr(Z) \tr(Z^*)^2 \tr(Z^*Z^2) + 4 \tr(Z^3) \tr(Z^{*3}) \\ 
	&+ 36 \tr(Z^*) \tr(Z^*Z) \tr(Z^*Z^2) + 18 \tr(Z) \tr(Z^{*2}) \tr(Z^*Z^2)\\ 
	& + 18 (\tr Z)^2 \tr(Z^*) \tr(Z^{*2}Z) + 18 \tr(Z^*) \tr(Z^2) \tr(Z^{*2}Z) + 18 \tr(Z^2) \tr(Z^{*3}Z) \\ 
	&+ 36 \tr(Z) \tr(Z^*Z) \tr(Z^{*2}Z) + 36 \tr(Z^*Z^2) \tr(Z^{*2}Z) + 6 \tr(Z) \tr(Z^2) \tr(Z^{*3})\\
	& + 18 \tr(Z^*)^2 \tr(Z^*Z^3) + 18 \tr(Z^{*2}) \tr(Z^*Z^3) + 18 \tr(Z) \tr(Z^*) \tr(Z^*ZZ^*Z) \\
	&+ 18 \tr(Z^*Z) \tr(Z^*ZZ^*Z) + 36 \tr(Z) \tr(Z^*) \tr(Z^{*2}Z^2)+ 18 (\tr Z)^2 \tr(Z^{*3}Z)  \\ 
	&+ 36 \tr(Z^*Z) \tr(Z^{*2}Z^2) + 36 \tr(Z^*) \tr(Z^*ZZ^*Z^2) + 36 \tr(Z^*) \tr(Z^{*2}Z^3) \\ 
	&+ 36 \tr(Z) \tr(Z^{*2}ZZ^*Z) + 36 \tr(Z) \tr(Z^{*3}Z^2) + 12 \tr(Z^*ZZ^*ZZ^*Z) \\ 
	& + 36 \tr(Z^{*2}Z^2Z^*Z) + 36 \tr(Z^{*2}ZZ^*Z^2) + 36 \tr(Z^{*3}Z^3) \smash{\Big)}. 
\end{align*}
}

\begin{acknowledgement}
	SRG partially supported by NSF grant DMS-2054002.  He thanks the International Linear Algebra Society (ILAS) for their generous support and sponsoring his Hans Schneider ILAS Lecture at IWOTA 2023 and his ILAS Lecture at JMM 2024.
\end{acknowledgement}

\bibliographystyle{amsplain}
\bibliography{RandomNorms-IWOTA}

\end{document}